\documentclass[12pt]{amsart} 
 
\usepackage{amsmath} 
\usepackage{amsthm} 
\usepackage{amssymb} 
\usepackage{fancyhdr} 
 
\usepackage{color} 
 
\makeatletter 
  
  \@addtoreset{equation}{section} 
 \makeatother

\title{The Alexandrov-Toponogov comparison theorem for radial curvature} 
 
\author{Nobuhiro Innami} 
\author{Katsuhiro Shiohama} 
\author{Yuya Uneme}
 
\address[N. Innami]{Department of Mathematics, Faculty of Science, Niigata 
University, Niigata, 950-2181, JAPAN} 
\address[K. Shiohama]{Department of Applied Mathematics, Faculty of Sciences, 
Fukuoka University, 8--19--1, Nanakuma, Jonan-ku, Fukuoka, 814-0180, JAPAN} 

\address[Y. Uneme]{Graduate School of Science and Technology, Niigata University, Niigata, 950-2181, JAPAN}

\thanks{Research of the first author was partially supported by Grant-in-Aid for Scientific Research (C), 22540072.}
\thanks{Research of the second author was partially supported by Grant-in-Aid for Scientific Research (C) 22540106}

\date{\today} 
\pagestyle{plain} 
 
\theoremstyle{plain} 
\newtheorem{thm}{Theorem} 
\newtheorem{prop}[thm]{Proposition} 
\newtheorem{lem}[thm]{Lemma} 
\newtheorem{cor}[thm]{Corollary} 
\newtheorem{exa}[thm]{Example} 
\newtheorem{rem}[thm]{Remark} 
\newtheorem{ass}[thm]{Assertion}
 
\begin{document} 
\maketitle 

\renewcommand{\thefootnote}{}
\footnotetext{2000 {\it Mathematics Subject Classification.} 53C20}
\footnotetext{{\it Key Words and Phrases.} Toponogov comparison theorem, geodesic, radial curvature, surface of revolution.}

\begin{abstract} 
We discuss the Alexandrov-Toponogov comparison theorem under the conditions of 
radial curvature of a pointed manifold $(M,o)$ with reference surface of 
revolution $(\widetilde M, \tilde o)$. There are two obstructions to make the 
comparison theorem for a triangle one of whose vertices is a base point $o$. One 
is the cut points of another vertex $\tilde p \not=\tilde o$ of a comparison 
triangle in $\widetilde M$. The other is the cut points of the base point $o$ in 
$M$. We find a condition under which the comparison theorem is valid for any 
geodesic triangle with a vertex at $o$ in $M$. 
\end{abstract} 
 
\section{Introduction} 
The Alexandrov-Toponogov comparison theorem (shortly ATCT) has been very useful 
in the study of geometry of geodesics including Riemannian geometry. In the 
present paper we discuss ATCT under the conditions of radial curvature of a 
pointed manifold $(M,o)$. We can see historical remarks and many references 
about ATCT in \cite{IMS-0}, \cite{KT2}. 
 
Reference space is a surface of revolution $(\widetilde M, \tilde o)$ with a 
geodesic polar coordinate system $(r, \theta)$ around $\tilde o$ such that its 
metric is given by 
\[ 
ds^2 = dr^2 + f(r)^2d\theta^2. 
\] 
Let $p \in M$ be fixed and set $\tilde p=(d(o,p),0) \in \widetilde M$. We may
find a point $\tilde q \in \widetilde M$ satisfying 
\[ 
d(\tilde o, \tilde q)=d(o,q) \quad\quad\quad \mbox{and} \quad\quad\quad d(\tilde 
p, \tilde q)=d(p,q) 
\] 
for any point $q \in M$. We call $\tilde q$ the {\it reference point} of $q$ and 
the map $\Phi : M \rightarrow \widetilde M$ given by $q \mapsto \tilde 
q$ the {\it reference map} of $M$ into $\widetilde M$. 
It is not certain whether or not every point $q \in M$ has a reference point and every geodesic triangle $\triangle opq$, $p, q 
\in M$, admits the corresponding geodesic triangle $\triangle \tilde o \tilde p 
\tilde q$, $\tilde p, \tilde q \in \widetilde M$.

There are two obstructions to ATCT. In the present paper we show that ATCT is established if the obstructions do not occur simultaneously at any point in $M$.

Let $T(p,q)$ denote a minimizing geodesic segment connecting $p$ and 
$q$ in $M$ and $\widetilde T(p,q)=\Phi(T(p,q))$. 
ATCT is valid if the reference curve $\widetilde T(p,q)$ and the minimizing geodesic segment $T(\tilde p, \tilde q)$ satisfy the good positional relation (see (2.2)). 
One obstruction to establish it is the appearance of cut points $Cut(\tilde p)$ to $\tilde 
p$ in the reference curve $\widetilde T(p, q)$ of $\triangle o p q$. In fact, we do not know what relation there is between the positions of the reference curve 
$\widetilde T(p,q)$ and $T(\tilde p, \tilde q)$ if the reference curve 
$\widetilde T(p,q)$ intersects $Cut(\tilde p)$. 
The cases treated in all papers for ATCT referred to a surface of revolution are free from the obstruction on the cut loci. 
Actually, it is proved in \cite{IMS} 
that ATCT for any geodesic triangle with a vertex $o$ holds if there are no cut 
points in a half part of $\widetilde M$ whose boundary consists of two meridians 
with angle $\pi $ at the vertex $\tilde o$.

In general, the composition of the reference map $\Phi$ with $r$-coordinate function may have a local maximum. This may cause a bad positional relation between $\widetilde T(p,q)$ and $T(\tilde p, \tilde q)$.
The other obstraction is the existence of the local maximum points of the 
distance function to $o$ in $M$, which is restricted to an ellipsoid with foci 
at $o$ and $p$. Let $E(p) \subset M$ be the set of all such local maximum points. Then, $E(p)$ 
is a subset of the cut points to the base point $o$ (see Lemma \ref{notmaximum}). The 
reference points of $E(p)$ may be in the boundary of the image  of the reference 
map of $M$ locally, so there possibly exists a minimizing geodesic segment whose 
endpoints are reference points but containing a non-reference point in 
$\widetilde M$. This situation shows that ATCT may not be true. 

It is natural to ask what condition verifies the existence of a minimizing geodesic segment $T(\tilde p, \tilde q)$ in $\widetilde M$ which has the good positional relation with $\widetilde T(p,q)$. 
We assume that the reference points of all points in $E(p)$ are not cut points 
of $\tilde p$ (see (2.1)). Under this assumption we prove that ATCT holds for any geodesic 
triangle $\triangle opq$.

We say that $(M,o)$ is {\it referred} to $(\widetilde M, 
\tilde o)$ if every radial sectional curvature at $p \in M$ is bounded below by 
$K(d(o,p))$, where $K(t)$ is the Gauss curvature of $\widetilde M$ at the points in the parallel $t$-circle. 
 
\begin{thm}[The Alexandrov convexity]\label{main1}
Assume that a complete pointed Riemannian manifold $(M,o)$ is referred to a 
surface of revolution $(\widetilde M, \tilde o)$. If a point $p \in M$ satisfies 
$ 
\Phi(E(p))\cap Cut(\tilde p)=\emptyset, 
$ 
then for any geodesic triangle $\triangle opq \subset M$ with $q \not= o, p$ there exists its comparison triangle $\triangle \tilde o \tilde p \tilde q \subset \widetilde M$ such that
$d(\tilde o , \tilde x) \le d(o, x)$ for any point $\tilde x \in T(\tilde p, \tilde q)$ and $x \in T(p,q)$ with $d(\tilde p, \tilde x)=d(p,x)$. Here $T(p,q)$ and $T(\tilde p, \tilde q)$ are the bases of geodesic triangles $\triangle opq$ and $\triangle \tilde o \tilde p \tilde q$, respectively.
\end{thm}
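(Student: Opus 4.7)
The plan is to parametrize the minimizing segment $T(p,q) \subset M$ by arc length $s \in [0, L]$, $L = d(p,q)$, with $x(s)$ the point at distance $s$ from $p$. By construction of the reference map, the image $\Phi(x(s)) \in \widetilde T(p,q)$ lies on the $\tilde p$-sphere of radius $s$ in $\widetilde M$ and satisfies $d(\tilde o, \Phi(x(s))) = d(o, x(s))$. Parametrizing $T(\tilde p, \tilde q)$ by arc length and letting $\tilde x(s)$ be its point at distance $s$ from $\tilde p$, the conclusion is equivalent to
\[
d(\tilde o, \tilde x(s)) \le d(\tilde o, \Phi(x(s))) \quad \text{for all } s \in [0, L],
\]
i.e., $T(\tilde p, \tilde q)$ stays on the $\tilde o$-near side of the reference curve $\widetilde T(p,q)$ in $\widetilde M$.

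I would establish this by a continuity/maximality argument in $s$. Near $s=0$ the reference point $\Phi(x(s))$ sits close to $\tilde p$, well outside $Cut(\tilde p)$, and an initial segment of $T(\tilde p, \tilde q)$ is the unique minimizing geodesic joining $\tilde p$ to $\Phi(x(s))$, so the inequality holds trivially there. As long as $\Phi(x(s))$ avoids $Cut(\tilde p)$ and $s \mapsto d(o, x(s))$ is locally monotone, the radial sectional curvature lower bound together with the standard Rauch/Jacobi comparison in the angular variable at $\tilde p$ yields an infinitesimal inequality that propagates the comparison in $s$. Let $s^{*} \in (0, L]$ denote the supremum of times at which the desired inequality is known to hold; the task reduces to showing $s^{*} = L$.

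The main obstacle, and the place where the hypothesis enters, arises at $s = s^{*}$ when $s^{*} < L$. The failure must come from one of the two obstructions identified in the introduction: either (i) $\Phi(x(s^{*})) \in Cut(\tilde p)$, so that the minimizer from $\tilde p$ is non-unique and the local comparison breaks down, or (ii) $s \mapsto d(o, x(s))$ has a local maximum at $s^{*}$, which by Lemma \ref{notmaximum} places $x(s^{*})$ in $E(p)$. The hypothesis $\Phi(E(p)) \cap Cut(\tilde p) = \emptyset$ forbids (i) and (ii) from holding simultaneously. If only (i) holds, then $s \mapsto d(o, x(s))$ is strictly monotone on at least one side of $s^{*}$, and one can switch to an appropriate alternative minimizing geodesic from $\tilde p$ past the cut locus to continue the comparison beyond $s^{*}$. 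If only (ii) holds, then $\Phi(x(s^{*})) \notin Cut(\tilde p)$, the minimizing geodesic from $\tilde p$ to $\Phi(x(s))$ depends smoothly on $s$ near $s^{*}$, and a second-variation analysis at the extremum, combined with the radial curvature bound, prevents the inequality from reversing. In either case the comparison extends past $s^{*}$, contradicting its maximality, so $s^{*} = L$ and the theorem follows.
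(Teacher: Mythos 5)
Your overall shape --- an open--closed continuation argument in which the two obstructions are the cut locus of $\tilde p$ and the set $E(p)$, with the hypothesis ruling out their coincidence --- matches the paper's strategy, but the argument as you set it up has a structural gap: you induct on the arc-length parameter $s$ along a single fixed geodesic $T(p,q)$, whereas the proof requires an induction on the ellipsoid parameter $r=d(o,x)+d(p,x)$ carried out simultaneously for \emph{all} points of $E(o,p\,;r)$ (this is how \S 7 is organized, via the conditions (C1), (C2) and the critical value $r_0$). The reason the single-geodesic induction cannot close is the cut-point case: when $\Phi(x(s^{*}))\in Cut(\tilde p)$ there are several minimizers from $\tilde p$, and to continue the comparison one must show that $\widetilde T(p,q)$ lies above the \emph{greatest} minimizer $U(\tilde p,\tilde x(s^{*}))$ and then crosses $Cut(\tilde p)$ to its near side. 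The only way this is obtained (Lemma \ref{upper}, Assertion \ref{Ass2}) is by approximating $x(s^{*})$ by points $q_j$ on the same ellipsoid with $d(o,q_j)>d(o,x(s^{*}))$ --- points transverse to your geodesic --- and invoking the comparison inequality for the triangles $\triangle o p q_j$. That inequality is exactly the inductive hypothesis for \emph{other} triangles of the same perimeter, which your formulation does not carry. Your phrase ``one can switch to an appropriate alternative minimizing geodesic from $\tilde p$ past the cut locus'' is precisely the hard step and is not justified by anything you have set up.

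Two further problems. First, your obstruction (ii) misreads $E(p)$: by definition $E(p)$ consists of local maxima of $d(o,\cdot)$ restricted to the ellipsoid $E(o,p\,;r)$ (a hypersurface), not of local maxima of $s\mapsto d(o,x(s))$ along the geodesic; Lemma \ref{notmaximum} gives a sufficient condition for a point \emph{not} to lie in $E_p(r)$ and does not yield the implication you use. Consequently the dichotomy ``(i) or (ii) at $s^{*}$, but not both'' is not established. Second, you assume throughout that the reference point $\Phi(x(s))$ exists, i.e.\ that a point of $\widetilde M^+_{\tilde p}$ realizes the pair $(d(o,x(s)),d(p,x(s)))$; this is part of what must be proved (it is the content of Assertion \ref{Ass3} and the reason the proof works inside $B(o,p\,;r)$ and perturbs the reference metric to $\widetilde M_{\delta}$ to avoid equality cases), and it can fail a priori for large triangles. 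The local propagation step away from $Cut(\tilde p)$ (your Rauch/Jacobi remark) is fine in spirit --- it is Lemma \ref{fundamental} together with Lemma \ref{comparison} --- but the global assembly needs to be reorganized around the ellipsoids before the proof is complete.
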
 
 
\begin{thm}[The Toponogov comparison theorem]\label{main2}
Under the same assumptions as in Theorem 1, every geodesic triangle $\triangle 
opq \subset M$ admits its comparison triangle $\triangle \tilde o \tilde p 
\tilde q \subset \widetilde M$ such that 
\begin{equation*} 
\angle opq \ge \angle \tilde o \tilde p \tilde q, \quad\quad 
\angle oqp \ge \angle \tilde o \tilde q \tilde p, \quad\quad 
\angle poq \ge \angle \tilde p \tilde o \tilde q. 
\end{equation*} 
\end{thm}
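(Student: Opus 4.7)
The plan is to derive each of the three angle inequalities of Theorem \ref{main2} from Theorem \ref{main1} by combining Alexandrov convexity with the first variation of arc length. Theorem \ref{main1} yields a comparison triangle $\triangle \tilde o \tilde p \tilde q$ in $\widetilde M$ with matching side lengths, together with the inequality $d(\tilde o, \tilde x) \le d(o, x)$ for corresponding points $x \in T(p, q)$ and $\tilde x \in T(\tilde p, \tilde q)$ satisfying $d(p, x) = d(\tilde p, \tilde x)$.

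For the angle at $p$, let $y_s \in T(p, q)$ and $\tilde y_s \in T(\tilde p, \tilde q)$ be the points at distance $s$ from $p$ and $\tilde p$, respectively. The first variation of arc length gives the one-sided derivatives
\[
\frac{d}{ds}\bigg|_{s=0^+} d(o, y_s) = -\cos \angle opq, \qquad \frac{d}{ds}\bigg|_{s=0^+} d(\tilde o, \tilde y_s) = -\cos \angle \tilde o \tilde p \tilde q.
\]
Since $d(o, p) = d(\tilde o, \tilde p)$ and Theorem \ref{main1} gives $d(\tilde o, \tilde y_s) \le d(o, y_s)$ for each $s \ge 0$, comparing the derivatives at $s = 0^+$ forces $\cos \angle opq \le \cos \angle \tilde o \tilde p \tilde q$, hence $\angle opq \ge \angle \tilde o \tilde p \tilde q$. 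Since the matching condition $d(p, x) = d(\tilde p, \tilde x)$ rewrites as $d(q, x) = d(\tilde q, \tilde x)$, the identical argument run from the $q$-end yields $\angle oqp \ge \angle \tilde o \tilde q \tilde p$.

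For the angle at $o$, I would analyse the image of $T(o, q)$ under the reference map. Write $\Phi(y_t) = (t, \theta(t))$ in polar coordinates on $\widetilde M$ with $\tilde p$ on the ray $\theta = 0$, where $y_t \in T(o, q)$ is at distance $t$ from $o$. A first variation comparison of $d(\tilde p, \Phi(y_t))$ at $\tilde o$ with $d(p, y_t)$ at $o$ shows $\theta(t) \to \angle poq$ as $t \downarrow 0$, whereas $\theta(d(o, q)) = \angle \tilde p \tilde o \tilde q$ by construction of $\tilde q = \Phi(q)$. To close the gap, one applies Theorem \ref{main1} to every sub-triangle $\triangle o p y_t$ (whose hypothesis $\Phi(E(p)) \cap Cut(\tilde p) = \emptyset$ depends only on $p$ and therefore still holds) and invokes the monotonicity of $d(\tilde p, \cdot)$ on the geodesic circles of $\tilde o$ in $\widetilde M \setminus Cut(\tilde p)$ to obtain $\theta(t) \ge \theta(d(o, q))$ throughout $0 < t \le d(o, q)$, whence $\angle poq \ge \angle \tilde p \tilde o \tilde q$.

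The main obstacle is precisely this final step. The angles at $p$ and $q$ are immediate from the Alexandrov convexity of Theorem \ref{main1}, but the angle at $o$ requires control of the angular coordinate $\theta(t)$ of the reference curve $\Phi(T(o, q))$ over the entire parameter interval, not just at its endpoints. The cut-locus hypothesis on $\tilde p$ plays an essential role here, ensuring that $d(\tilde p, \cdot)$ is monotone on the relevant geodesic circles of $\tilde o$, which is what lets the infinitesimal comparison at $\tilde o$ propagate into the global angle inequality at $o$.
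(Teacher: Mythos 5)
Your proposal is correct and follows essentially the paper's own route: the angle inequalities at $p$ and $q$ are exactly the consequence of the Alexandrov convexity of Theorem~\ref{main1} via first variation (Remark~\ref{ordinary} and Lemma~\ref{baseangle}~(3)), and the angle at $o$ is obtained, as in the paper's Assertion~\ref{Ass1}, by showing the angular coordinate of $\Phi(T(o,q))$ is monotone non-increasing, using the vertex-angle comparison in the subtriangles $\triangle o p y_t$ together with the monotonicity of $d(\tilde p,\cdot)$ on parallel circles (Lemma~\ref{circle}~(1)). The only slight mischaracterization is your closing remark: that circle monotonicity holds unconditionally on $\widetilde M^{+}_{\tilde p}$, so the cut-locus hypothesis enters only through the validity of Theorem~\ref{main1} itself, not through the final monotonicity step.
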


As an application of these theorems we are allowed to define a pointed Alexandrov space $(M,o)$ with radial curvature bounded below by a function $K$ (see Remark \ref{pole} and sequent paragraphs).

\begin{cor}\label{App-1}
Let $(M,o)$ be a compact Riemannian manifold which is an Alexandrov space with a base point at $o$ with radial 
curvature bounded below by the function $K$. Here $K : [0,\ell] 
\rightarrow \mathbb{R}$, $\ell < \infty$, is the radial curvature function of $(\widetilde M, \tilde o)$. Then, the perimeters of all geodesic triangles $\triangle opq$ in $M$ are less than or equal to $2\ell$ and the diameter of $M$ is less than or equal to $\ell$.
Moreover, if there exists a geodesic triangle $\triangle opq$ in $M$ whose perimeter is $2\ell$, then $M$ is isometric to the warped product manifold whose warping function is $K$.
In particular, the same conclusion holds for $M$ if the diameter of $M$ is $\ell$.
\end{cor}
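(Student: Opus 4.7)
The idea is to push every bound through the comparison theorems and to exploit the special geometry at the antipodal pole of $\widetilde M$. Since $K$ is defined on $[0,\ell]$ with $\ell<\infty$, the reference surface $\widetilde M$ is closed with a second pole $\tilde o'$ at $r=\ell$, has diameter $\ell$, and each meridian from $\tilde o$ through an arbitrary $\tilde y\in\widetilde M$ to $\tilde o'$ has length $\ell$ and realizes both $d(\tilde o,\tilde y)$ and $d(\tilde o',\tilde y)$; hence
\[
d(\tilde o,\tilde y)+d(\tilde o',\tilde y)=\ell \qquad \text{for every } \tilde y\in\widetilde M.
\]
This identity drives every subsequent estimate.

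For any geodesic triangle $\triangle opq$ with $p,q\ne o$, Theorem \ref{main2} produces a comparison triangle $\triangle\tilde o\tilde p\tilde q\subset\widetilde M$ with the same side lengths. The triangle inequality applied to $(\tilde p,\tilde o',\tilde q)$ then gives
\[
d(\tilde p,\tilde q)\le d(\tilde p,\tilde o')+d(\tilde o',\tilde q)=2\ell-d(\tilde o,\tilde p)-d(\tilde o,\tilde q),
\]
so the perimeter of $\triangle opq$ is at most $2\ell$. The diameter bound follows from $d(p,q)=d(\tilde p,\tilde q)\le\operatorname{diam}(\widetilde M)=\ell$, together with $d(o,\cdot)\le\ell$, which is needed in order for the reference map to be defined.

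For the rigidity, suppose $\triangle opq$ has perimeter $2\ell$; equality in the chain above forces $\tilde o'\in T(\tilde p,\tilde q)$. Let $x_0\in T(p,q)$ correspond to $\tilde o'$. Theorem \ref{main1} yields $d(o,x_0)\ge d(\tilde o,\tilde o')=\ell$, and the diameter bound just proved forces equality. For any $y\in M$, applying Theorem \ref{main2} to $\triangle o x_0 y$ forces the comparison vertex $\tilde x_0$ to coincide with $\tilde o'$ (the only point of $\widetilde M$ at distance $\ell$ from $\tilde o$), so $d(x_0,y)=d(\tilde o',\tilde y)=\ell-d(o,y)$. Hence every $y\in M$ lies on a minimizing $o$-to-$x_0$ geodesic of length $\ell$; that is, $M$ is radially foliated by such geodesics. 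A rigidity argument then kicks in: equality in the Jacobi/Rauch comparison along every radial geodesic, propagated from the equality case of Theorem \ref{main1}, identifies the metric on each $r$-level set around $o$ with that of the corresponding parallel of $\widetilde M$, so that $M$ is isometric to the warped product whose warping function is the $f$ determined by $f''+Kf=0$. For the last assertion, if $\operatorname{diam}(M)=\ell$ is realized by $(p,q)$ with $p,q\ne o$, then $d(o,p)+d(o,q)\ge d(p,q)=\ell$ combined with the perimeter bound $d(o,p)+d(o,q)\le\ell$ forces the perimeter of $\triangle opq$ to be exactly $2\ell$, reducing to the previous case; if instead $p=o$ or $q=o$, the remaining vertex plays the role of $x_0$ directly.

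The main obstacle is the final rigidity step: turning the radial foliation and pointwise distance equality into a global isometry with the warped product requires carefully propagating the comparison equality transversely to the foliation, in the spirit of Cheng's maximal-diameter theorem but adapted to non-constant warping. Everything prior to this step is a direct application of Theorems \ref{main1} and \ref{main2} together with the triangle inequality in $\widetilde M$.
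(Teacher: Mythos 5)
Your perimeter and diameter bounds are correct and coincide with the paper's own argument (Corollary \ref{circumference}): everything reduces to the identity $d(\tilde o,\tilde y)+d(\tilde o_1,\tilde y)=\ell$ at the antipodal pole $\tilde o_1$ together with the comparison triangle, and your reduction of the maximal-diameter case to the maximal-perimeter case is also the paper's. The rigidity part, however, has two genuine gaps.

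First, from ``perimeter $=2\ell$'' you conclude $\tilde o'\in T(\tilde p,\tilde q)$ and take $x_0\in T(p,q)$ ``corresponding to $\tilde o'$''. Equality in the triangle inequality only shows that $T(\tilde p,\tilde o_1)\cup T(\tilde o_1,\tilde q)$ is \emph{one} minimizing segment from $\tilde p$ to $\tilde q$ (in fact the greatest one, $U(\tilde p,\tilde q)$); the base of the comparison triangle supplied by Theorems \ref{main1} and \ref{main2} may be a different minimizing segment avoiding $\tilde o_1$. This is not a hypothetical worry: when $\tilde q\in Cut(\tilde p)$ there can be many minimizing segments, and already on a round sphere with $\tilde p,\tilde q$ antipodal only one of them passes through the pole. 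To produce a point of $T(p,q)$ whose reference point is $\tilde o_1$ one needs the stronger positional relation $\widetilde T(p,q)\ge U(\tilde p,\tilde q)$, i.e.\ condition (C1); and since $U(\tilde p,\tilde q)$ runs along the boundary meridians of $\widetilde M^+_{\tilde p}$, the paper has to strengthen the cut-locus hypothesis from the interior to the closed half-surface (condition (\ref{cond2}) in Corollary \ref{maximalPerimeter}; see also Remark \ref{constCurv} for how this is circumvented when the curvature is constant). Your proposal silently assumes the favorable choice of base.

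Second, once a point $x_0$ with $d(o,x_0)=\ell$ is found, your derivation of $d(o,y)+d(x_0,y)=\ell$ for all $y$ is fine, but you stop there and concede that passing from this radial foliation to the warped-product isometry is ``the main obstacle''. That is exactly the ingredient the paper supplies by citation rather than by proof: the farthest point (maximal distance) theorem of Itokawa--Machigashira--Shiohama \cite{IMS-0} states that a manifold referred to $(\widetilde M,\tilde o)$ containing a point at distance $\ell$ from $o$ is isometric to the warped product with warping function determined by $K$. Without invoking that theorem, or carrying out the Rauch-equality propagation you only sketch, the rigidity statement is not proved.
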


The following corollary is proved in \cite{KO} when $M$ is a noncompact pointed 
 Riemannian manifold with radial curvature bounded below by the function $K$ which is monotone non-increasing. We call such a surface of revolution with monotone non-increasing curvature function a {\em von Mangoldt surface}.
There are no cut points in an open half part of a von Mangoldt surface $\widetilde M$ whose boundary consists of two meridians with angle $\pi $ at the vertex $\tilde o$ (see \cite{T}).

\begin{cor}\label{App1}
Let $(M,o)$ be a noncompact Alexandrov space with a base point at $o$ with radial 
curvature bounded below by the function $K$. Here $K : [0,\infty ) 
\rightarrow \mathbb{R}$ is the radial curvature function of $(\widetilde M, 
\tilde o)$. If the total curvature of $\widetilde M$ is positive, then $M$ has 
one end and has no straight line. 
\end{cor}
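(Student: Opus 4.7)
The plan is to adapt the Riemannian argument of \cite{KO} to the Alexandrov setting by using Theorem \ref{main2} as the comparison tool. Noting that a complete proper geodesic space with more than one end automatically contains a straight line (via Arzel\`a--Ascoli applied to minimizing segments joining sequences that diverge in two distinct ends, reparametrized so that each segment passes through a fixed compact ``neck'' separating the ends), it suffices to rule out the existence of a line in $M$.

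Suppose, for contradiction, that $\gamma : \mathbb{R} \to M$ is a straight line parametrized by arc length. For each $t > 0$, the triangle $\triangle o\, \gamma(-t)\, \gamma(t)$ has base of length $d(\gamma(-t), \gamma(t)) = 2t$, and by the triangle inequality its sides $a_t := d(o, \gamma(-t))$ and $b_t := d(o, \gamma(t))$ satisfy $a_t + b_t - 2t \leq 2\, d(o, \gamma(0))$. In particular $a_t, b_t \sim t$ as $t \to \infty$, and the excess $a_t + b_t - 2t$ remains uniformly bounded. Theorem \ref{main2} then produces a comparison triangle $\triangle \tilde o\, \tilde p_t\, \tilde q_t \subset \widetilde M$ with these same side lengths.

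The contradiction now comes from a two-dimensional estimate on the reference surface: a noncompact von Mangoldt surface of positive total curvature cannot admit such comparison triangles for large $t$. Positive total curvature is equivalent to $\lim_{r \to \infty} f'(r) < 1$, and by \cite{T} the cut locus of $\tilde p_t$ is confined to the opposite meridian, so the minimizing segment $T(\tilde p_t, \tilde q_t)$ is a non-radial geodesic whose behavior is governed by Clairaut's relation. An explicit Clairaut computation then bounds the distance between any two points at radii $a, b$ from $\tilde o$ by $(1 - \delta)(a + b) + o(\min(a, b))$ for some fixed $\delta > 0$ depending on the asymptotic cone angle of $\widetilde M$. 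Applied to the comparison triangle with $a_t, b_t \sim t$, this forces $2t = d(\tilde p_t, \tilde q_t) \leq (1 - \delta)(a_t + b_t) + o(t)$, which combined with $a_t + b_t \leq 2t + O(1)$ is impossible for $t$ sufficiently large. This rules out lines in $M$, and together with the one-end reduction above completes the proof. The main obstacle is the Clairaut-based reference-surface estimate, which is where the hypothesis of positive total curvature enters essentially; the remaining steps are routine once Theorem \ref{main2} is available.
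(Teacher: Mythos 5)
Your strategy genuinely differs from the paper's. The paper quantifies nothing on $\widetilde M$: given a line $T$ in $M$, it uses the convexity property (2.2) to show that the reference curves of $T|_{[t_1,\infty)}$ majorize minimizing segments, hence rays, emanating from $\widetilde T(t_1)=(d(o,T(t_1)),0)$, each passing through the fixed ball of radius $d(o,T)$ about $\tilde o$ at the parameter $t_0$; letting $t_1\to-\infty$ these rays converge to a straight line in $\widetilde M$, and the soft fact that a surface admitting a line has nonpositive total curvature (Cohn--Vossen) gives the contradiction. You instead contradict the bare existence of the comparison configuration by an explicit excess estimate on $\widetilde M$. Your one-end reduction and the bounded-excess observation $a_t+b_t-2t\le 2d(o,\gamma(0))$ are fine and match the paper's use of these facts.

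The gap is the reference-surface estimate, which is both unproved and, as stated, false. The claim that $d(\tilde p,\tilde q)\le(1-\delta)(a+b)+o(\min(a,b))$ for \emph{any} two points at radii $a,b$ fails for unbalanced radii: take $\tilde p,\tilde q$ on the same meridian with $a$ fixed and $b\to\infty$; then $d(\tilde p,\tilde q)=b-a$, which exceeds $(1-\delta)(a+b)+o(a)$ once $\delta b>(2-\delta)a+o(a)$. What you actually need (and what is true) is an estimate in the balanced regime: writing $\lambda:=\lim_{r\to\infty}f'(r)$, positive total curvature gives $\lambda<1$, and for points with $a,b\to\infty$ and $a/b$ bounded away from $0$ and $\infty$ one has $a+b-d(\tilde p,\tilde q)\ge\epsilon\min(a,b)-o(\min(a,b))$ for some $\epsilon=\epsilon(\lambda)>0$; on the limit cone of total angle $2\pi\lambda$ the maximal (antipodal) distance is $\sqrt{a^2+b^2-2ab\cos\pi\lambda}$, which explains the constant. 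Since $a_t,b_t\in[t-C,t+C]$ in your application, this corrected version suffices, but it must be stated correctly and then actually proved --- via Clairaut together with control of $f$, or via convergence of the rescaled surface to its asymptotic cone. That estimate carries the entire mathematical weight of the corollary, so it cannot be left as an assertion; if you prefer not to prove it, the paper's route (transfer the line to $\widetilde M$, then quote the nonpositivity of total curvature in the presence of a line) avoids it altogether.
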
 
 
These results will be stated more precisely in \S 2 after introducing some 
definitions and notations. 

The idea of the proof of the theorems is this. 
The good positional relation (2.2) is equivalent to the Alexsandrov convexity and the Toponogov angle comparison (see Remark \ref{ordinary}).  
Therefore, we study what positional relation holds between the reference curve $\widetilde T(p,q)$ of every  minimizing geodesic segment $T(p,q)$ in $M$ and a minimizing geodesic segment $T(\tilde p, \tilde q)$ in $\widetilde M$. 
To do this we use the partial order $\le$ in the set of all curves which are parameterized by the angle coordinate $\theta$ in $\widetilde M$.
Let $U(\tilde p, \tilde q)$ and $L(\tilde p, \tilde q)$ denote the minimizing geodesic segments connecting $\tilde p$ and $\tilde q$ in $\widetilde M$ such that $L(\tilde p, \tilde q) \le T(\tilde p, \tilde q) \le U(\tilde p, \tilde q)$ for any minimizing geodesic segment $T(\tilde p, \tilde q)$, namely all minimizing geodesic segments $T(\tilde p, \tilde q)$ lie in the biangle domain bounded by $L(\tilde p, \tilde q)\cup U(\tilde p, \tilde q)$ in $\widetilde M$ (see \S 6). 

Let $\widetilde M_{\tilde p}^+$ denote the half part of $\widetilde M$ bounded by the union of the meridians through $\tilde p$ and opposite to $\tilde p$.
For $r > d(o,p)$ we define 
an ellipsoid in $M$ by $E(o,p \, ;r)=\{ x \in M \, | \, d(o,x)+d(p,x)=r \}$. 
Let $r_0$ be the least upper bound of the set of all $r_1 > d(o,p)$ 
satisfying the following properties: If $q \in E(o,p \, ;r)$ for an $r \in (d(o,p), r_1)$, then
\begin{enumerate} 
\item[(C1)] there exists a minimizing geodesic segment $T(p, q)$ such that $T(p,q)$ is contained in the set $\Phi^{-1}(\widetilde M_{\tilde p}^+)$ and $\widetilde T(p,q) \ge U(\tilde p, \tilde q) $, 
\item[(C2)] every minimizing geodesic segment $T(p,q)$ is contained in the set $\Phi^{-1}(\widetilde M_{\tilde p}^+)$ and satisfies
$\widetilde T(p,q) \ge L(\tilde p, \tilde q) $.
\end{enumerate} 

It follows from \cite{IMS-0}, \cite{IMS} and  \cite{KT2} that $r_0 > d(o,p)$ (see Lemma \ref{fundamental}). We then prove that every geodesic triangle $\triangle opq$ in $M$ for $q \in E(o, p \, ; r)$, $d(o,p) < r  < r_0$, has a comparison triangle $\triangle \tilde o \tilde p \tilde q$ in $\widetilde M$ satisfying (2.3) (Assertion \ref{Ass1}).
All points in $E(o,p \, ;r_0)$ satisfy (C1) and (C2) again (Assertion \ref{Ass2}). 
We see, from the assumption of our theorems, that even when $q \in E(o,p \, ;r_0)$ with $q \not\in Cut(p)$ and $\tilde q \in Cut (\tilde p)$, the reference curve $\widetilde T_e(p,q)$ of the maximal minimizing geodesic $T_e(p, q)$ through $p$ and $q$ crosses
 $Cut(\tilde p)$ from the far side to the near side from $\tilde o$ in $\widetilde M$.
This fact shows that there exists an $r' > r_0$ such that (C1) and (C2) are true for any $r$, $d(o,p) < r < r'$ (see Assertions \ref{Ass3}, \ref{Ass4} and \ref{Ass5}). This means that the domain bounded by $E(o, p \, ; r_0)$ covers $M$.
 
The rest of this article is organized as follows. In \S 2 we state our results precisely with giving some notions we need. 
In \S 3 we give some properties of circles and ellipses in a surface of revolution. 
Moreover we give a sufficient condition for a point $q \in M$ not being contained in $E(p)$, and an example showing the property of ellipses which is essentially different from circles.
From \S 4 we start studying reference curves. 
In \S 4 we give the fundamental properties of the reference curves and the reference reverse curves. 
In \S 5 we treat the case that the reference curves from $\tilde p$ do not meet $Cut (\tilde p)$ in $\widetilde M$. 
In \S 6 we study the reference curves from $\tilde p$ meeting $Cut (\tilde p)$ from the far side to the near side from $\tilde o$.
In those cases we have the good positional relation between $\widetilde T(p,q)$ and $T(\tilde p, \tilde q)$.
In \S 7 we give the proof of our theorems. 
Our assumption (2.1) ensures that $\widetilde T(p,q)$ crosses $Cut (\tilde p)$ from the far side to the near side from $\tilde o$. The assumption is used only in the proofs of Assertions \ref{Ass2}.
In \S 8 we show some corollaries concerning the maximal perimeter and diameter as applications of our theorems. Those are the Riemannian version of Corollary \ref{App-1}, and we give the proof of  Corollary \ref{App1}.

Basic tools in Riemannian Geometry are referred to [1].
 
\section{Notations and statements} 
Let $(\widetilde M, \tilde o)$ be a surface of revolution with a geodesic polar 
coordinate system $(r, \theta)$ around $\tilde o$. Its metric is given 
by 
\[ 
ds^2 = dr^2 + f(r)^2d\theta^2, 
\] 
where $f(r) > 0$, $0 < r < \ell \le \infty$, $\theta \in S^1$ and 
$f : [0, \ell ) \longrightarrow \mathbb{R}$ satisfies the Jacobi equation 
\[ 
f''+Kf=0, \quad\quad f(0)=0, \quad f'(0)=1. 
\] 
In addition, 
\[ 
f(\ell)=0, \quad f'(\ell)=-1 \quad\quad \mbox{if} \quad\quad \ell < \infty. 
\] 
The function $K$ is called the {\it radial curvature function} of $\widetilde 
M$. 
 
Let $(M,o)$ be a complete Riemannian manifold with a base point at $o$. A {\it 
radial plane} $\Pi \subset T_pM$ at a point $p \in M$ is by definition a plane 
containing a vector tangent to a minimizing geodesic segment emanating from $o$ 
where $T_pM$ is the tangent space of $M$ at $p$. A {\it radial sectional 
curvature} $K_M(\Pi)$ is by definition a sectional curvature with respect to a 
radial plane $\Pi$. We say that $(M,o)$ is {\it referred} to $(\widetilde M, 
\tilde o)$ if every radial sectional curvature at $p \in M$ is bounded below by 
$K(d(o,p))$, namely, $K_M(\Pi) \ge K(d(o,p))$ where $d(o,p)$ is by definition 
the distance between $o$ and $p$. 

A triple of minimizing geodesic 
segments $T(o,p)\cup T(o,q) \cup T(p,q)$ joining points $o, p, q \in M$ is called a {\it geodesic triangle} and 
denoted by $\triangle opq$. A geodesic triangle $\triangle \tilde o \tilde p 
\tilde q \subset \widetilde M$ is called a {\it comparison triangle 
corresponding to} $\triangle opq \subset M$ if the corresponding edges have the 
same lengths, namely, 
\[ 
d(o,p)=d(\tilde o,\tilde p), \quad\quad 
d(o,q)=d(\tilde o,\tilde q), \quad\quad 
d(p,q)=d(\tilde p,\tilde q). 
\]
In the reference surface of revolution $\widetilde M$ every geodesic triangle $\triangle \tilde o \tilde p \tilde q$ bounds the region because $\widetilde M$ is simply connected and the dimension of $\widetilde M$ is two. The region is called a {\em triangle domain} and denoted by the same symbol $\triangle \tilde o \tilde p \tilde q$.

With respect to a point $\tilde p \in \widetilde M$, we divide $\widetilde M$ into two parts as follows: 
\[ 
\widetilde M_{\tilde p}^+=[ \theta( \tilde p ) \le \theta \le \theta( \tilde p 
)+ \pi ], \quad\quad 
\widetilde M_{\tilde p}^-=[ \theta( \tilde p ) - \pi \le \theta \le \theta( 
\tilde p ) ]. 
\] 
Here we set 
\[ 
[ a \le \theta \le b ]=\{ \tilde x \in \widetilde M \; | \; a \le \theta( \tilde 
x ) \le b \}. 
\] 
The pair of distance functions $\tilde x \mapsto (d(\tilde o, \tilde x), 
d(\tilde p, \tilde x))$, $\tilde x \in \widetilde M$, defines a Lipschitz chart on 
the interiors ${\rm Int}(\widetilde M_{\tilde p}^{\pm})$ of $\widetilde 
M_{\tilde p}^{\pm}$ respectively (see Lemma \ref{circle} (1)). For an arbitrary fixed point $p \in M$ and 
$\tilde p \in \widetilde M$ we define the maps : 
\[ 
F_p : M \rightarrow \mathbb{R}^2, \quad\quad  F_p(x)=(d(o,x), d(p,x)), \quad 
x \in M, 
\] 
and 
\[ 
\widetilde F_{\tilde p} : \widetilde M_{\tilde p}^+ \rightarrow 
\mathbb{R}^2, \quad\quad \widetilde F_{\tilde p}(\tilde x)=(d(\tilde o, \tilde 
x), d(\tilde p, \tilde x)), \quad \tilde x \in \widetilde M_{\tilde p}^+, 
\] 
\[ 
\widetilde G_{\tilde p} : \widetilde M_{\tilde p}^- \rightarrow 
\mathbb{R}^2, \quad\quad \widetilde G_{\tilde p}(\tilde x)=(d(\tilde o, \tilde 
x), d(\tilde p, \tilde x)), \quad \tilde x \in \widetilde M_{\tilde p}^-. 
\] 
Then, $F_p$, $\widetilde F_{\tilde p}$ and $\widetilde G_{\tilde p}$ are 
Lipschitz continuous, both $\widetilde F_{\tilde p}$ and $\widetilde G_{\tilde 
p}$ are injective and their inverse maps are locally Lipschitz continuous. 
 
A unit speed minimizing geodesic segment from $p$ to $q$ is denoted by 
$T(p,q)(t)$, $0 \le t \le d(p,q)$, where $T(p,q)(0)=p$ and $T(p,q)(d(p,q))=q$. 
Also $T(p,q)$ is identified with its image $\{ T(p,q)(t) \, | \, 0 \le t \le 
d(p,q) \}$. 

For an arbitrary fixed point $p \in M$ we set $\tilde p=(d(o,p), 
0) \in \widetilde M$. If $T(p,q) \subset F_p{}^{-1}( \widetilde F_{\tilde p}(\widetilde 
M_{\tilde p}^+))$, we then define a curve $\widetilde T(p,q)$ in $\widetilde M$ 
such that 
\[ 
\widetilde T(p,q)(t)=\widetilde F_{\tilde p}{}^{-1}\circ F_p(T(p,q)(t)),\quad 0 \le t \le d(p,q). 
\] 
 
Obviously we have $\widetilde T(p,q)(0)=\tilde p$. The reference map $\Phi $ 
used in \S 1 is just $\widetilde F_{\tilde p}{}^{-1}\circ F_p$ which 
depends on the choice of $\tilde p \in \widetilde M$ corresponding to $p \in M$.
Namely, if $\tilde p_1 = \tau_{\theta}(\tilde p)$ for some rotation $\tau_{\theta}$ of $\widetilde M$ around $\tilde o$, then  $\widetilde F_{\tilde p_1}{}^{-1}\circ F_p(T(p,q)(t))  =\tau_{\theta}(\widetilde F_{\tilde p}{}^{-1}\circ F_p(T(p,q)(t) )$).

It is convenient to use the expression $\widetilde F_{\tilde p}{}^{-1}\circ F_p$ 
for defining the reference reverse curve. 
Setting $\tilde q =\widetilde T(p,q)(d(p,q))$, we have the {\it reference 
reverse curve} $\widetilde R(p,q)$ of $T(p,q)$ which is given by 
\[ 
\widetilde R(p,q)(t)=\widetilde G_{\tilde q}{}^{-1}\circ F_q(T(p,q)(d(p,q)-t)),\quad 0 \le t \le d(p,q).
\] 
 We then have $\widetilde R(p,q)(0)=\tilde q$, 
$\widetilde R(p,q)(d(p,q))=\tilde p$. Both $\widetilde T(p,q)$ and $\widetilde 
R(p,q)$ are curves connecting $\tilde p$ and $\tilde q$ in $\widetilde M_{\tilde 
p}^+$. 
Notice that $\widetilde T(p,q) \not= \widetilde R(p,q)$, in general, as point 
sets in $\widetilde M$. 
 
A set $C$ is said to be {\em parameterized by the angle coordinate} $\theta $ if 
$C \cap [\theta = a]$ contains at most one point where $[\theta = a]=\{ \tilde x 
\in \widetilde M \, | \, \theta (\tilde x )=a \}$. Let two sets $C_1$ and $C_2$ be
parameterized by the angle coordinate. We then define the positional relation between $C_1$ and $C_2$ by $C_1 \le C_2$ if 
 $r(C_1 \cap [\theta = a]) \le r(C_2 \cap [\theta = a])$ for all 
$a \in \mathbb{R}$ with $C_1 \cap [\theta = a]\not= \emptyset $ and $C_2 \cap 
[\theta = a]\not= \emptyset $. \par
 
We say that a point $\tilde q$ in $\widetilde M$ is a {\it cut point} of $\tilde 
p$ if any extension of a minimizing geodesic segment $T(\tilde p, \tilde q)$ is not minimizing. Let 
$Cut(\tilde p)$ denote the set of all cut points of $\tilde p \in \widetilde M$. It is well known that $Cut(\tilde p)$ carries the structure of a tree in $\widetilde M$. 
All edges of $Cut(\tilde p)\cap {\rm Int}(\widetilde M^{\pm}_{\tilde p})$ and all non-meridian geodesics in $\widetilde M^+_{\tilde p}$ are parameterized by the angle coordinate.\par
 
For $r > d(o,p)$ we define an ellipsoid in $M$ by 
\[ 
E(o,p \, ;r)=\{ x \in M \, | \, d(o,x)+d(p,x)=r \} 
\] 
and the distance function to $o$ restricted to $E(o,p \, ; r)$ by 
$d_r(x)=d(o,x),\;x \in E(o,p \, ; r)$. Let $E_p(r)$ be the set of 
all points where $d_r$ attains local maximums and set 
$E(p)=\cup_{r>d(o,p)}E_p(r)$. We will have $E(p) \subset Cut (p)$ (see Lemma \ref{notmaximum}).

By using these notations we will prove the following theorem which is a 
restatement of Theorem \ref{main1}. \par

\begin{thm}\label{restate1}
Assume that a complete pointed Riemannian manifold $(M,o)$ is referred to a 
surface of revolution $(\widetilde M, \tilde o)$. Let $p \in M$. Suppose 
\begin{equation} 
F_p(E(p))\cap \widetilde F_{\tilde p}(Cut(\tilde p)\cap {\rm Int}(\widetilde 
M_{\tilde p}^+))=\emptyset. 
\end{equation}
Then, there exists a minimizing geodesic segment $T(\tilde p, \tilde q)$ in 
$\widetilde M_{\tilde p}^+$ such that 
\begin{equation} 
\widetilde T(p,q) \ge T(\tilde p, \tilde q) 
\quad\quad 
\mbox{and} 
\quad\quad 
\widetilde R(p,q) \ge T(\tilde p, \tilde q). 
\end{equation} 
holds for every minimizing geodesic segment $T(p,q)$, $q \in M$. 
\end{thm}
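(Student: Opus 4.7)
The plan is to prove the theorem by a continuity argument along the foliation of $M$ by the ellipsoids $E(o, p \, ; r)$, $r > d(o,p)$. I would define $r_0$ as in the introduction: the supremum of those $r_1 > d(o,p)$ such that every $q$ on an intermediate ellipsoid $E(o, p \, ; r)$, $d(o,p) < r < r_1$, satisfies both (C1) and (C2). The target conclusion (2.2)---that $\widetilde T(p,q)$ and $\widetilde R(p,q)$ both lie above a common minimizing segment $T(\tilde p, \tilde q)$---is equivalent to saying that both lie above $L(\tilde p, \tilde q)$ while at least one lies above $U(\tilde p, \tilde q)$, because any segment in the biangle bounded by $L(\tilde p, \tilde q) \cup U(\tilde p, \tilde q)$ then qualifies. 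Thus it suffices to show that $r_0$ eventually exceeds $d(o,q) + d(p,q)$ for every $q \in M$. That $r_0 > d(o,p)$ is the local fundamental lemma inherited from \cite{IMS-0}, \cite{IMS}, \cite{KT2}.

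First I would establish closedness at $r_0$: every $q \in E(o,p \, ; r_0)$ still satisfies (C1) and (C2). Taking a sequence $q_n$ with $d(o,q_n) + d(p,q_n) \uparrow r_0$ and $q_n \to q$, one extracts a limit of minimizing segments $T(p,q_n) \to T(p,q)$, and the angular-parameter orderings $\widetilde T(p,q_n) \ge U(\tilde p, \tilde q_n)$ and $\widetilde T(p,q_n) \ge L(\tilde p, \tilde q_n)$ survive the pointwise limit because $\le$ is preserved under pointwise convergence of curves parameterized by $\theta$. This is the content of Assertion \ref{Ass2}, and it is precisely at this step that the hypothesis (2.1) is used: it excludes the pathological case in which $q \in E(p)$ but $\tilde q$ lands in $Cut(\tilde p)\cap \mathrm{Int}(\widetilde M_{\tilde p}^+)$, which would otherwise destroy the sandwich in the limit.

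Second, I would push strictly beyond $r_0$. Fix $q \in E(o, p \, ; r_0)$. If $q \notin E(p)$, then by Lemma \ref{notmaximum} the point $q$ is not a local maximum of the restricted distance-to-$o$ function, hence $T(p,q)$ extends to a maximal minimizing segment $T_e(p,q)$ through $q$. When the reference curve $\widetilde T_e(p,q)$ avoids $Cut(\tilde p)$, openness directly yields (C1), (C2) on a slightly larger range. When $\widetilde T_e(p,q)$ does meet $Cut(\tilde p)$, the content of Assertions \ref{Ass3}--\ref{Ass5} is that the crossing always occurs from the far side to the near side from $\tilde o$; beyond the crossing the reference curve re-enters the region where the sandwich with $L$ and $U$ remains intact, so (C1), (C2) extend slightly past $r_0$ again. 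For the remaining case $q \in E(p) \cap E(o, p \, ; r_0)$, assumption (2.1) directly gives $\tilde q \notin Cut(\tilde p)$, and the extension is immediate.

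The hard part is precisely the transversality claim of the second step: that $\widetilde T(p,q)$ meets $Cut(\tilde p)$ only by passing from the far to the near side from $\tilde o$, never the reverse. Proving this requires combining the Rauch-type radial curvature comparison built into the hypothesis that $M$ is referred to $\widetilde M$ with the tree structure of $Cut(\tilde p)$ and the first-variation of arc length where a geodesic meets the cut locus. Once this transversality is secured, the continuity argument closes: $r_0$ cannot be a proper supremum, so the ellipsoids $E(o,p \, ;r)$ with $r < r_0$ exhaust $M$, and (2.2) holds for every minimizing $T(p,q)$.
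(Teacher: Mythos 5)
Your outline reproduces the paper's strategy: the same $r_0$ defined via the ellipsoid foliation, the same conditions (C1)/(C2), closedness at $r_0$ followed by openness beyond it. But two of the steps you assert are exactly where the work lies, and as stated they do not go through.

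First, the closedness of (C1) at $r_0$. You claim the ordering $\widetilde T(p,q_n) \ge U(\tilde p, \tilde q_n)$ ``survives the pointwise limit.'' The difficulty is that when $\tilde q \in Cut(\tilde p)$ the limit of the segments $T(\tilde p, \tilde q_n)$ is some minimizing segment lying between $L(\tilde p,\tilde q)$ and $U(\tilde p,\tilde q)$, and there is no reason it should be $U(\tilde p,\tilde q)$; passing to the limit then only yields $\widetilde T(p,q) \ge$ that limit segment, which is strictly weaker than (C1) and is not enough to restart the continuation argument. The paper's mechanism is different: hypothesis (2.1) gives $q \notin E_p(r_0)$, so by Lemma \ref{notmaximum} one can approach $q$ \emph{along the ellipsoid} $E(o,p\,;r_0)$ by points $q_j$ with $d(o,q_j) > d(o,q)$, and Lemma \ref{SegEllipse} then forces $T(\tilde p, \tilde q_j) \to U(\tilde p, \tilde q)$ specifically. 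Your version uses (2.1) only as an exclusion of a ``pathological case,'' not as the source of this directed approximation, so the sandwich with $U$ is never actually obtained. The same non-maximality is also what makes the crossing of $Cut(\tilde p)$ positive (Lemma \ref{upper}); it is not a separate transversality fact to be derived from Rauch plus first variation.

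Second, the openness step. Before one can even speak of (C1), (C2) for $r$ slightly larger than $r_0$, one must show that every point $x$ with $d(o,x)+d(p,x)$ slightly larger than $r_0$ still lies in $F_p{}^{-1}(\widetilde F_{\tilde p}(\widetilde M^+_{\tilde p}))$, i.e., has a reference point at all. This is Assertion \ref{Ass3}, whose proof (the case $\angle\tilde p\tilde o\tilde q = \pi$, the construction of $\Psi$ and the Rauch comparison giving $d(p,x) < d(\tilde p, \Psi(x))$) occupies a substantial portion of the paper and is absent from your sketch. Relatedly, several of the limit dichotomies only close because the paper replaces $\widetilde M$ by the perturbed surface $\widetilde M_\delta$ of Remark \ref{curvature}, so that the equality $\widetilde T(p,q) = T(\tilde p, \tilde q)$ is excluded by Corollary \ref{equalityCase}; without this device the contradiction arguments in Lemma \ref{comparison} and Assertions \ref{Ass4} and \ref{Ass5} do not terminate.
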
 

\begin{rem}\label{ordinary}
{\rm 
The relation {\rm (2,2)} is nothing but the Alexandrov convexity property. 
Namely, we have from {\rm (2.2)} 
\begin{equation*} 
d(o,T(p,q)(t)) \ge d(\tilde o, T(\tilde p, \tilde q)(t)), \quad d(o,T(q,p)(t)) 
\ge d(\tilde o, T(\tilde q, \tilde p)(t)) 
\end{equation*} 
for all $t \in [0, d(p,q)]$ (see Lemma \ref{baseangle} (2)). Then the angle estimates at the corners $p$ and $q$ 
of $\triangle opq$ are obtained by the above relations (see Lemma \ref{baseangle} (3)). 
} 
\end{rem}

Moreover, the angle estimate at $o$ is obtained, also. The following theorem is 
the refined statement of Theorem \ref{main2}. 
 
\begin{thm}\label{restate2}
Under the same assumptions as in Theorem \ref{restate1}, every geodesic triangle $\triangle 
opq \subset M$ admits its comparison triangle $\triangle \tilde o \tilde p 
\tilde q \subset \widetilde M$ such that 
\begin{equation} 
\angle opq \ge \angle \tilde o \tilde p \tilde q, \quad\quad 
\angle oqp \ge \angle \tilde o \tilde q \tilde p, \quad\quad 
\angle poq \ge \angle \tilde p \tilde o \tilde q. 
\end{equation} 
\end{thm}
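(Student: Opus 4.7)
My plan is to obtain the two base-angle inequalities of (2.3) directly from the positional relation (2.2) supplied by Theorem \ref{restate1}, and to handle the apex-angle inequality by a first-variation argument along the side $T(o,q)$.

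Apply Theorem \ref{restate1} to $\triangle opq$ to produce a minimizing segment $T(\tilde p,\tilde q)\subset\widetilde M_{\tilde p}^+$ satisfying (2.2), and let $\triangle\tilde o\tilde p\tilde q$ be the comparison triangle with this segment as its base. By Lemma \ref{baseangle}(2), the relation $\widetilde T(p,q)\ge T(\tilde p,\tilde q)$ yields the Alexandrov convexity $d(o,T(p,q)(t))\ge d(\tilde o,T(\tilde p,\tilde q)(t))$ for every $t\in[0,d(p,q)]$, and Lemma \ref{baseangle}(3) converts this into $\angle opq\ge\angle\tilde o\tilde p\tilde q$. The symmetric argument with $\widetilde R(p,q)\ge T(\tilde p,\tilde q)$ yields $\angle oqp\ge\angle\tilde o\tilde q\tilde p$, as noted in Remark \ref{ordinary}.

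For the apex comparison $\angle poq\ge\angle\tilde p\tilde o\tilde q$ I would subdivide along $T(o,q)$. For each $s\in(0,d(o,q)]$ set $q_s:=T(o,q)(s)$; since hypothesis (2.1) concerns only $p$, Theorem \ref{restate1} applies to every sub-triangle $\triangle opq_s$, producing a comparison apex $\tilde q_s^{\ast}:=\Phi(q_s)\in\widetilde M_{\tilde p}^+$ with $d(\tilde o,\tilde q_s^{\ast})=s$ and $d(\tilde p,\tilde q_s^{\ast})=d(p,q_s)$. These sweep a continuous curve in $\widetilde M_{\tilde p}^+$ from $\tilde o$ to $\tilde q$ whose angular coordinate $\theta_s^{\ast}:=\theta(\tilde q_s^{\ast})$ satisfies $\theta_{d(o,q)}^{\ast}=\angle\tilde p\tilde o\tilde q$. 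The first-variation formula in $M$ gives $d(p,q_s)=d(o,p)-s\cos(\angle poq)+o(s)$ as $s\to 0^+$, while the Gauss lemma at $\tilde o$ in $\widetilde M$ yields the polar expansion $d(\tilde p,(s,\theta))=d(\tilde o,\tilde p)-s\cos\theta+O(s^2)$ uniformly in $\theta\in[0,\pi]$; matching the two through $d(\tilde p,\tilde q_s^{\ast})=d(p,q_s)$ forces $\theta_s^{\ast}\to\angle poq$ as $s\to 0^+$.

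The principal obstacle is to promote this limit to the global inequality $\theta_{d(o,q)}^{\ast}\le\angle poq$, equivalently that the curve $s\mapsto\tilde q_s^{\ast}$ stays inside the angular sector $\{0\le\theta\le\angle poq\}$ of $\widetilde M_{\tilde p}^+$. I would argue by contradiction: if $\theta_{s^{\ast}}^{\ast}>\angle poq$ for some $s^{\ast}\in(0,d(o,q)]$, then the reference curve $\widetilde T(p,q_{s^{\ast}})$ and the minimizing base $T(\tilde p,\tilde q_{s^{\ast}}^{\ast})$ would be forced into a configuration incompatible with (2.2) applied to $\triangle opq_{s^{\ast}}$, unless $T(\tilde p,\tilde q_{s^{\ast}}^{\ast})$ crossed $Cut(\tilde p)$ from the near side to the far side from $\tilde o$; hypothesis (2.1), together with the mechanism developed in Assertions \ref{Ass2}--\ref{Ass5}, rules out the latter. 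With monotonicity established, $\theta_{d(o,q)}^{\ast}\le\angle poq$, i.e.\ $\angle\tilde p\tilde o\tilde q\le\angle poq$, yields the third inequality of (2.3), completing the proof.
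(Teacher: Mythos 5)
Your treatment of the two base angles is correct and is exactly the paper's route: Theorem \ref{restate1} gives (2.2), Lemma \ref{baseangle}(2) converts it into the Alexandrov convexity, and Lemma \ref{baseangle}(3) (applied to $\widetilde T$ and to $\widetilde R$) gives $\angle opq\ge\angle\tilde o\tilde p\tilde q$ and $\angle oqp\ge\angle\tilde o\tilde q\tilde p$. Your setup for the apex angle --- sweeping $q_s=T(o,q)(s)$, tracking $\theta^{\ast}_s=\theta(\Phi(q_s))$, and identifying $\lim_{s\to0^+}\theta^{\ast}_s=\angle poq$ by first variation --- also coincides with the paper's Assertion \ref{Ass1}.

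However, the step you yourself flag as ``the principal obstacle'' is where the proposal has a genuine gap. You assert that $\theta^{\ast}_{s^{\ast}}>\angle poq$ would force ``a configuration incompatible with (2.2) applied to $\triangle opq_{s^{\ast}}$,'' but no such direct incompatibility exists: (2.2) compares radial coordinates of $\widetilde T(p,q_{s^{\ast}})$ and $T(\tilde p,\tilde q^{\ast}_{s^{\ast}})$ angle by angle, and says nothing about how the angular coordinate of the endpoint $\tilde q^{\ast}_{s^{\ast}}$ relates to the angle $\angle poq$ measured at $o$ in $M$. Likewise, invoking a near-to-far crossing of $Cut(\tilde p)$ and hypothesis (2.1) here is misplaced --- that machinery is used to \emph{establish} (2.2), not to extract the apex angle from it. What is actually needed, and what the paper supplies, is a pointwise monotonicity argument: at each intermediate $s$ one uses the base-angle comparison at the vertex $q_s$ of the sub-triangle $\triangle opq_s$ (namely $\angle oq_sp\ge\angle\tilde o\tilde q^{\ast}_s\tilde p$, obtained from (C1), (C2) and Lemma \ref{baseangle}) together with the first variation formula to compare the one-sided derivatives $g_s{}_+'(0)=\frac{d}{dh}\big|_{h=0^+}d(p,q_{s+h})$ and $\tilde g_s{}_+'(0)=\frac{d}{dh}\big|_{h=0^+}d\bigl(\tilde p,(s+h,\theta^{\ast}_s)\bigr)$; the resulting inequality $d(\tilde p,\tilde q^{\ast}_{s+h})<d\bigl(\tilde p,(s+h,\theta^{\ast}_s)\bigr)$ is then converted by Lemma \ref{circle}(1) into $\theta^{\ast}_{s+h}<\theta^{\ast}_s$. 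Your first-variation computation is performed only at $s=0$, so the monotonicity of $s\mapsto\theta^{\ast}_s$ --- the heart of the apex-angle inequality --- is never established. (A further technical point the paper handles, and you omit, is passing to the perturbed surface $\widetilde M_{\delta}$ to avoid degenerate equality cases in this derivative comparison.)
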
 
 
We emphasize that {\rm (2.3)} is obtained only by the radial curvature with 
respect to $o$. 
 
\begin{rem}\label{isometric}
{\rm 
Under the same assumptions as in Theorem \ref{restate1}, if $\widetilde T(p,q)\cap 
T(\tilde p, \tilde q) \not= \{ \tilde p, \tilde q \}$ for a minimizing geodesic 
segment $T(p,q)$ in $M$, then $\widetilde T(p,q)=T(\tilde p, \tilde q)$ and a 
geodesic triangle $\triangle opq$ in $M$ bounds a totally geodesic 2-dimensional submanifold which is isometric to a comparison triangle domain
$\triangle \tilde o \tilde p \tilde q$ in 
$\widetilde M$ corresponding to $\triangle opq$ (see Lemma \ref{comparison}).
} 
\end{rem}

\begin{rem}\label{vonMangoldt}
{\rm 
If $\widetilde M$ is the standard 2-sphere, the flat plane or the Poincar\'e disk, then 
every point $\tilde o \in \widetilde M$ is viewed as a base point of $\widetilde 
M$ and any point $p \in M$ satisfies $Cut(\tilde p)\cap {\rm Int}(\widetilde M_{\tilde p}^+)=\emptyset$.
We say that a surface 
of revolution $\widetilde M$ is a {\it von-Mangoldt surface} if its radial 
curvature function is monotone non-increasing. 
Every point on a von-Mangoldt 
surface of revolution $\widetilde M$ satisfies $Cut(\tilde p)\cap {\rm Int}(\widetilde M_{\tilde p}^+)=\emptyset$ (see \cite{T}). Thus, Theorem \ref{restate2} implies that {\rm 
(2.3)} holds for every geodesic triangle $\triangle opq$. This result was first 
obtained in  \cite{IMS}. The angle estimate at the base point in a sector without cut 
points has been obtained by Kondo and Tanaka \cite{KT2}. 
} 
\end{rem} 
 
\begin{rem}\label{pole}
{\rm 
Assume that $o \in M$ is a pole of $M$. Namely, the exponential map ${\rm exp}_o 
: T_oM \rightarrow M$ at $o$ is a diffeomorphism. Then $E(p)$ for every 
point $p \not= o$ is the subray from $p$ of the meridian passing through $p$ 
(see Lemma \ref{notmaximum}). We then have (2.1) for every $p \in M$, $p \not= o$, and (2.3) 
for every $\triangle opq$. The same fact holds for a compact Riemannian manifold 
$M$ if the parameters of the first conjugate points to $o$ along any unit speed 
geodesics emanating from $o$ are constant. 
} 
\end{rem} 
 
Remark \ref{pole} suggests us to define a pointed Alexandrov space $(M,o)$ with radial 
curvature bounded below by a function $K$ as follows. Let $(\widetilde M, \tilde 
o)$ be a surface of revolution with radial curvature function $K$. We say that an
Alexandrov space $(M,o)$ with curvature locally bounded below is a {\it pointed Alexandrov space with radial 
curvature bounded below by the function} $K$ if the following condition is satisfied:
\begin{enumerate}
\item Every geodesic triangle $\triangle opq$ in $M$ admits its comparison triangle $\triangle \tilde o \tilde p \tilde q$ in $\widetilde M$ satisfying (2.2). 
\item Conversely, for every geodesic triangle $\triangle \tilde o \tilde p \tilde q$ whose vertices $\tilde p$ and $\tilde q$ in $\widetilde M$ are the reference points $p$ and $q$ in $M$, respectively, there exists a geodesic triangle $\triangle opq$ in $M$ satisfying (2.2).
\end{enumerate}   
This definition makes 
sense because of Remark \ref{pole}. In fact, if $K_1$ is any function less than or equal 
to $K$, then Remark \ref{pole} ensures that $(M,o)$ is a pointed Alexandrov space with 
radial curvature bounded below by the function $K_1$. 
Using this notion, we have Corollaries \ref{App-1} and \ref{App1} in \S 1. 

\section{Circles and ellipses}

Let $S(\tilde p \, ; a)=\{ \tilde x \in \widetilde M \, | \,  d(\tilde p, 
\tilde x) = a \}$ be the metric $a$-circle centered at $\tilde p$ and $S(\tilde p, a)^{\pm}=S(\tilde p \, ; a)\cap\widetilde M_{\tilde p}^{\pm}$. 
Like this, we often write $X^{\pm}:= X\cap \widetilde M^{\pm}_{\tilde p}$ for a set $X \subset \widetilde M$.
Let $I_{\tilde p, a}^{\pm}=\{ u \in  [r(\tilde p)-a, r(\tilde p)+a] \, | \, S(\tilde p \, ; a)^{\pm}\cap [r=u] \not= \emptyset \}$ and $S_{\tilde p, a}^{\pm}(u)=S(\tilde p \, ; a)^{\pm}\cap 
[r=u]$ for $u \in I_{\tilde p, a}^{\pm}$. 
We will show that $S_{\tilde p, a}^{\pm} : I^{\pm}_{\tilde p, a} \rightarrow \widetilde M^{\pm}_{\tilde p}$ is a union of curves. Obviously, $I_{\tilde p, a}^+=I_{\tilde p,a}^- =:I_{\tilde p,a}$. In general, $S(\tilde p \, ; a)$ is not necessarily connected, and then $I_{\tilde p, a}$ is the union of some intervals and points contained in $[r(\tilde p)-a, r(\tilde p)+a]$ as seen in Lemma \ref{circle} below. When $\ell < \infty $, we set $\tilde o_1$ to be the antipodal vertex of $\tilde o$, namely $r(\tilde o_1)=\ell$. Obviously, $S(\tilde o \, ; a)=S(\tilde o_1 \, ; \ell - a)=[r = a]$, $0 \le a \le \ell$.
 
\begin{lem}\label{circle}
Let $\tilde p \in \widetilde M$, $\tilde p \not= \tilde o, \, \tilde o_1$. The metric circles 
in $\widetilde M$ satisfy the following properties. 
\begin{enumerate} 
\item[(1)]  If an $r_1$-parallel circle $c=[r=r_1] \subset \widetilde 
M^+_{\tilde p}$ is parameterized as $c(\varphi )=[r=r_1]\cap[\theta = \varphi ]$ 
for any $ \varphi$, then $d(\tilde p, c(\varphi ))$ is strictly increasing in 
$\varphi \in [\theta (\tilde p), \theta (\tilde p)+\pi]$. 
\item[(2)] Each of $S_{\tilde p, a}^+(u)$ and $S_{\tilde p, a}^-(u)$ consists of only one point for any $u \in I_{\tilde p, a}$. In particular, 
each of $S_{\tilde p, a}^+$ and $S_{\tilde p, a}^-$ is the union of curves in 
$\widetilde M$ with parameter $u \in I_{\tilde p, a}$.
\item[(3)] Let $\tilde q \in \widetilde M_{\tilde p}^+$ and let $s > 0$ and $t > 0$ satisfy $s +t=d(\tilde p, \tilde q)$. If $\tilde z \in S(\tilde p, s)^+ 
\cap S(\tilde q, t)^-$, then there exists the unique minimizing geodesic segment 
$T(\tilde p, \tilde q)$ passing through $\tilde z$. 
\end{enumerate} 
\end{lem}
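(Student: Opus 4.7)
The three parts are linked since (2) follows at once from (1), so I would tackle them in the order (1), (2), (3). For (1), the strategy is to combine rotational symmetry with a critical-point analysis. Set $h(\varphi) := d(\tilde p, c(\varphi))$. The rotational isometries of $\widetilde M$ give $h(\theta(\tilde p) + \alpha) = h(\theta(\tilde p) - \alpha)$, so $h$ is symmetric about $\theta(\tilde p)$ on $S^1$. Moreover, since $ds \ge |dr|$ along any unit-speed curve, the meridian through $\tilde p$ realizes the shortest distance from $\tilde p$ to $c$, giving $h(\theta(\tilde p)) = |r_1 - r(\tilde p)|$ as the global minimum of $h$, attained at $\varphi = \theta(\tilde p)$ only (since $\tilde p \ne \tilde o, \tilde o_1$ ensures a unique closest point on $c$). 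The heart of the proof is to exclude interior critical points of $h$ on $(\theta(\tilde p), \theta(\tilde p) + \pi)$: if $\varphi_0$ were such a point, first variation would produce a minimizing geodesic $T(\tilde p, c(\varphi_0))$ meeting $c$ perpendicularly, but Clairaut's relation $f(r)\sin\psi \equiv 0$ then forces this geodesic to be a meridian through $\tilde p$, which meets $c$ only at $\varphi = \theta(\tilde p)$ or $\theta(\tilde p) + \pi$, contradicting $\varphi_0$ interior. Continuity of $h$ without interior critical points then forces $h$ to be monotonic on the closed interval; strict monotonicity follows from the strict inequality $h(\theta(\tilde p)) < h(\theta(\tilde p) + \pi)$.

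With (1) in hand, (2) follows: applying (1) to the parallel $c_u := [r = u]$ for each $u \in I_{\tilde p, a}$, the strictly monotone map $\varphi \mapsto d(\tilde p, c_u(\varphi))$ on $[\theta(\tilde p), \theta(\tilde p) + \pi]$ is injective, so at most one $\varphi$ in this interval yields $d(\tilde p, c_u(\varphi)) = a$; the analogue for $S_{\tilde p, a}^-(u)$ follows by reflecting across the meridian through $\tilde p$. For (3), I would use the standard concatenation argument. For existence, since $s + t = d(\tilde p, \tilde q)$, any concatenation of minimizers $T(\tilde p, \tilde z)$ and $T(\tilde z, \tilde q)$ has length $d(\tilde p, \tilde q)$ and is thus itself a minimizer; the no-corner principle makes it smooth at $\tilde z$, so it is a minimizing geodesic through $\tilde z$. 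For uniqueness, if $\gamma_1, \gamma_2$ are two minimizers from $\tilde p$ to $\tilde q$ both passing through $\tilde z$ at arc-parameter $s$, then the curve formed from the first half of $\gamma_1$ and the second half of $\gamma_2$ is again a minimizer, hence smooth at $\tilde z$, forcing $\gamma_1'(s) = \gamma_2'(s)$ and therefore $\gamma_1 = \gamma_2$.

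The main obstacle I expect is the first-variation step in (1), since $h$ is only Lipschitz in general (non-smooth at cut points of $\tilde p$, where minimizers are non-unique). The argument should be carried out through the one-sided generalized derivatives of a Lipschitz distance function. At an interior local minimum of $h$, these one-sided derivatives force every outgoing tangent of a minimizer at $c(\varphi_0)$ to be perpendicular to $c'(\varphi_0)$, and Clairaut then yields a meridian. At an interior local maximum one must additionally rule out a pair of minimizers whose outgoing tangents straddle the perpendicular direction; this can be handled by combining reflection symmetry across the meridian through $\tilde p$ with the observation that minimizers from $\tilde p$ to an interior point of $c \cap \widetilde M^+_{\tilde p}$ lie inside $\widetilde M^+_{\tilde p}$ (so their Clairaut invariants are non-negative, vanishing only for meridians). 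Once this analytic step is handled rigorously, parts (2) and (3) reduce to the routine arguments above.
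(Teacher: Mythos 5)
Your argument for (1) is sound in outline and ultimately rests on the same two ingredients as the paper's proof (the first variation formula, and the fact that a geodesic meeting a parallel circle orthogonally has vanishing Clairaut invariant and is therefore a meridian), but it is organized less efficiently, and the one statement you leave unproved is precisely the load-bearing one. The ``observation'' that every minimizing geodesic from $\tilde p$ to an interior point $c(\varphi_0)$ of $c\cap\widetilde M^+_{\tilde p}$ stays inside $\widetilde M^+_{\tilde p}$ does require an argument: if such a minimizer left $\widetilde M^+_{\tilde p}$, reflecting its excursion across $[\theta=\theta(\tilde p)]\cup[\theta=\theta(\tilde p)+\pi]$ (an isometry of $\widetilde M$) would produce a broken curve of the same length joining the same endpoints, which is impossible for a minimizer unless it meets that meridian pair tangentially --- and a geodesic tangent to a meridian is that meridian, which cannot reach an interior $\varphi_0$. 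Once this is in place, your critical-point dichotomy becomes superfluous: every minimizer from $\tilde p$ to $c(\varphi_0)$ is then a non-meridian geodesic confined to $\widetilde M^+_{\tilde p}$, so $\dot\theta>0$ along it and it arrives at $c(\varphi_0)$ making an angle strictly less than $\pi/2$ with $c'(\varphi_0)$; the first variation formula, applied to each one-sided derivative of the Lipschitz function $h$ by extremizing over the compact family of minimizers, then shows both one-sided derivatives are positive at every interior $\varphi_0$, giving strict monotonicity directly. This is exactly the paper's three-line proof. Your patch for the local-maximum case works only because it secretly invokes this same containment fact, at which point the entire local-min/local-max case analysis can be deleted; as written, the first paragraph's claim that an interior critical point ``produces a perpendicular minimizer'' is literally correct only at local minima. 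Parts (2) and (3) are handled correctly and exactly as in the paper (strict monotonicity gives injectivity on each parallel; concatenation plus the no-corner principle gives existence and uniqueness in (3)).
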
 

\begin{proof} 
Since all meridians $[\theta = \varphi ]$ are geodesics which intersect the parallel circles $[r = r_1]$ orthogonally and hence the minimizing geodesic segments from $\tilde p$ intersect the parallel circle $c=[r = r_1]$ with the angles less than $\pi /2$, we have (1) from the first variation formula. 

This fact (1) implies that each of 
$S_{\tilde p, a}^+(u)$ and $S_{\tilde p, a}^-(u)$ consists of at most one point 
for any $u \in [r(\tilde p)-a, r(\tilde p)+a]$. This proves (2). 

Then (3) follows from the fact 
\[ 
d(\tilde p, \tilde z) + d(\tilde z, \tilde q) = s + t = d(\tilde p, \tilde q). 
\]
This completes the proof.
\end{proof} 
 
Let $T(p, q)^\cdot (0)$ denote the tangent vector of the curve $T(p,q)(t)$ at $t=0$.
We define a map $g : [0, \pi] \rightarrow S(\widetilde p \, ; a)^+\cup \{ \phi \}$ as follows: If $\omega \in [0, \pi]$ is the angle of $T(\tilde p, \tilde o)\,\dot{}(0)$ with $T(\tilde p, \tilde q)\,\dot{}(0)$ for some point $\tilde q \in S(\widetilde p \, ; a)$, then $g(\omega)=\tilde q$. If $\omega $ satisfies $\omega_1 \le \omega \le \omega_2$ for some $\omega_1$ and $\omega_2$ with $g(\omega_1)=g(\omega_2)$, then $g(\omega )=g(\omega_1)$. 
Otherwise, $g(\omega )=\phi$ where $\phi$ is the dummy. The connected components of $[0,\pi]\smallsetminus g^{-1}(\phi)$ corresponds to those of $I_{\tilde p,a}$.
If $\bar g : [0, \pi] \smallsetminus g^{-1}(\phi ) \rightarrow I_{\tilde p, a}$ is the map given by $g(\omega )=S^+_{\tilde p, a}(\bar g(\omega ))$, then $\bar g$ is monotone nondecreasing in each connected component.

Let $B(\tilde o, \tilde p \, ;  a)\subset\widetilde M$ for $a>d(\tilde o, \tilde p)$ be the domain given by $B(\tilde o, \tilde p \, ; a)=\{ \tilde q \, | \, d(\tilde  o,\tilde q)+d(\tilde p,\tilde q) \le a \}$. 
When $\ell < \infty $, the function $d(\tilde p, \tilde q) + d(\tilde o, \tilde q)$, $\tilde q \in \widetilde M$, attains the maximum $2\ell - d(\tilde p, \tilde o)$ at $\tilde o_1$. 
We therefore have $B(\tilde o, \tilde p \, ; a) \supset \widetilde M$ for every $a \ge 2\ell - d(\tilde p, \tilde o)$. If $a < 2\ell - d(\tilde p, \tilde o)$, then $\tilde o_1 \not\in B(\tilde o, \tilde p \, ; a)$. 

\begin{lem}\label{ellipse}
Let $\tilde p \not= \tilde o, \, \tilde o_1$.
The ellipses $E(\tilde o, \tilde p \, ; a)$, $d(\tilde o, \tilde p) < a < 2\ell - d(\tilde o, \tilde p)$, in $\widetilde M$ has the following properties. 
\begin{enumerate} 
\item[(1)]  $B(\tilde o, \tilde p \, ; a)$ is star-shaped around $\tilde p$ and $\tilde o$. Namely, $T(\tilde p, \tilde q) \subset B(\tilde o, \tilde p \, ; a)$ and $T(\tilde o, \tilde q) \subset B(\tilde o, \tilde p \, ; a)$ for any $\tilde q \in B(\tilde o, \tilde p \, ; a)$. 
If $\tilde q \in E(\tilde o, \tilde p \, ; a)$, then $T(\tilde p, \tilde q )\cap E(\tilde o, \tilde p \, ; a)=\{ \tilde q \}$ and $T(\tilde o, \tilde q )\cap E(\tilde o, \tilde p \, ; a)=\{ \tilde q \}$. Furthermore, $\angle \tilde p \tilde q \tilde o \not= \pi$.
\item[(2)] The intersection $E(\tilde o, \tilde p \, ; a)\cap [\theta=\varphi]$ is a single point for all $\varphi \in [\theta (\tilde p)-\pi , \theta (\tilde p)+\pi ]$. If $(r(\varphi), \varphi)=E(\tilde o, \tilde p \, ; a)\cap [\theta=\varphi]$, 
then $r(\theta (\tilde p)-\varphi)=r(\theta (\tilde p)+\varphi)$ for $\varphi \in [0, \pi]$. Moreover, $r(\varphi )$ is monotone increasing for $\varphi \in [\theta (\tilde p)-\pi , \theta (\tilde p)]$ and monotone decreasing for $\varphi \in [\theta (\tilde p) , \theta (\tilde p)+\pi]$. In particular, $r(\theta (\tilde p))=(a+d(\tilde o, \tilde p))/2$ is the maximum and $r(\theta (\tilde p)\pm \pi)$ is the minimum.
\item[(3)] If $e^{\pm}(u)=E(\tilde o, \tilde p \, ; a)\cap \widetilde M_{\tilde p}^{\pm}\cap [r=u]$ for $u \in [r(\theta (\tilde p)+\pi) , r(\theta (\tilde p))]$, then the function $d(\tilde p, e^{\pm}(u))$ is monotone decreasing in $u \in [r(\theta (\tilde p)+\pi) , r(\theta (\tilde p))]$.
\item[(4)] Set  $\tilde q_1= (r(\theta (\tilde p)\pm\pi) , \theta(\tilde p)\pm \pi )$. Let $b \in (a-r(\theta (\tilde p)), d(\tilde p, \tilde q_1))$. 
Then, $S(\tilde p, b)$ crosses $E(\tilde o, \tilde p \, ; a)$ once in each of $\widetilde M_{\tilde p}^+$ and $\widetilde M_{\tilde p}^-$. 
If $b=(a-d(\tilde o, \tilde p))/2$, then $S(\tilde p, b) \subset B(\tilde o, \tilde p \, ; a)$ and  $S(\tilde p, b) \cap E(\tilde o, \tilde p \, ; a )=\{ (r(\theta (\tilde p)), \theta (\tilde p)) \}$. 
If $b=d(\tilde o, \tilde q_1)$, then $S(\tilde o, b) \subset B(\tilde o, \tilde p \, ; a)$ and $S(\tilde o, b) \cap E(\tilde o, \tilde p \, ; a)=\{ \tilde q_1 \}$.
\end{enumerate}
\end{lem}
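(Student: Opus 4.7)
The plan is to treat the four assertions in order; the substantive work is concentrated in (1) and (2), while (3) and (4) drop out from the parametrization of $E$ by $\theta$ and elementary triangle inequalities.

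For (1), star-shapedness around $\tilde p$ is by direct triangle inequality: for $\tilde q\in B(\tilde o,\tilde p;a)$ and $\tilde z\in T(\tilde p,\tilde q)$ one has $d(\tilde p,\tilde z)+d(\tilde z,\tilde q)=d(\tilde p,\tilde q)$, whence
\begin{equation*}
d(\tilde o,\tilde z)+d(\tilde p,\tilde z)\le\bigl(d(\tilde o,\tilde q)+d(\tilde q,\tilde z)\bigr)+d(\tilde p,\tilde z)=d(\tilde o,\tilde q)+d(\tilde p,\tilde q)\le a,
\end{equation*}
and symmetrically for $T(\tilde o,\tilde q)$. For the uniqueness clauses, suppose $\tilde q\in E(\tilde o,\tilde p;a)$ and $\tilde z\in T(\tilde p,\tilde q)\cap E(\tilde o,\tilde p;a)$ with $\tilde z\ne\tilde q$. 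Saturation of the chain above gives $d(\tilde o,\tilde z)=d(\tilde o,\tilde q)+d(\tilde q,\tilde z)$; since $\tilde o$ is the pole, the minimizing segment $T(\tilde o,\tilde z)$ is a meridian, so $\tilde q$ lies between $\tilde o$ and $\tilde z$ on that same half-meridian. Hence an arc of $T(\tilde p,\tilde q)$ near $\tilde q$ coincides with the meridian, forcing $T(\tilde p,\tilde q)$ itself to be a meridian arc; checking each possible position of $\tilde p$ on that meridian reduces to $a=d(\tilde o,\tilde p)$, contradicting the hypothesis. The same reduction settles $T(\tilde o,\tilde q)\cap E(\tilde o,\tilde p;a)=\{\tilde q\}$ and $\angle\tilde p\tilde q\tilde o\ne\pi$, since $\angle\tilde p\tilde q\tilde o=\pi$ would make $T(\tilde p,\tilde q)$ a meridian segment reaching $\tilde o$ and give the same collapse.

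For (2), the decisive observation is that every $T(\tilde o,\tilde x)$ is a meridian, so star-shape of $B(\tilde o,\tilde p;a)$ around $\tilde o$ forces $E(\tilde o,\tilde p;a)\cap[\theta=\varphi]$ to consist of a single point $(r(\varphi),\varphi)$. The symmetry $r(\theta(\tilde p)-\varphi)=r(\theta(\tilde p)+\varphi)$ follows from the reflection isometry of $\widetilde M$ fixing the meridian through $\tilde p$, which preserves both $\tilde o$ and $\tilde p$. For strict monotonicity, given $\theta(\tilde p)\le\theta_1<\theta_2\le\theta(\tilde p)+\pi$ with $(r_1,\theta_1)\in E(\tilde o,\tilde p;a)$, Lemma~\ref{circle}(1) applied to the parallel $r_1$-circle yields
\begin{equation*}
d(\tilde p,(r_1,\theta_2))>d(\tilde p,(r_1,\theta_1))=a-r_1,
\end{equation*}
so $(r_1,\theta_2)\notin B(\tilde o,\tilde p;a)$, and star-shape around $\tilde o$ forces $r_2<r_1$. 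The value $r(\theta(\tilde p))=(a+d(\tilde o,\tilde p))/2$ is read off by solving $r+|r-d(\tilde o,\tilde p)|=a$ on the meridian through $\tilde p$, and the minimum at $\theta(\tilde p)\pm\pi$ then follows by monotonicity and symmetry.

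Assertion (3) is immediate from $d(\tilde p,e^\pm(u))=a-u$. For (4), the continuous strict decreasing bijection $u\mapsto a-u$ carries $[r(\theta(\tilde p)+\pi),r(\theta(\tilde p))]$ onto $[(a-d(\tilde o,\tilde p))/2,d(\tilde p,\tilde q_1)]$, so every $b$ in the open interval is attained at a unique $u$, and the $\pm$-symmetry yields exactly one intersection of $S(\tilde p,b)$ with $E(\tilde o,\tilde p;a)$ in each of $\widetilde M_{\tilde p}^\pm$. For the boundary cases, direct triangle inequalities suffice: when $b=(a-d(\tilde o,\tilde p))/2$, the bound $d(\tilde o,\tilde z)+d(\tilde p,\tilde z)\le d(\tilde o,\tilde p)+2b=a$ on $S(\tilde p,b)$ gives containment in $B(\tilde o,\tilde p;a)$, with equality forcing $\tilde p\in T(\tilde o,\tilde z)$ and hence a single touching at $(r(\theta(\tilde p)),\theta(\tilde p))$; when $b=d(\tilde o,\tilde q_1)$, $S(\tilde o,b)$ is the parallel $r(\theta(\tilde p)+\pi)$-circle, on which Lemma~\ref{circle}(1) locates the unique maximum $d(\tilde p,\tilde q_1)=a-d(\tilde o,\tilde q_1)$ of $d(\tilde p,\cdot)$ at $\tilde q_1$, giving containment in $B(\tilde o,\tilde p;a)$ and single tangency. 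The main obstacle is the rigidity step inside (1): converting a saturated triangle inequality into a genuine contradiction with $a>d(\tilde o,\tilde p)$ relies essentially on $\tilde o$ being a pole, so that the minimizer $T(\tilde o,\cdot)$ is rigidly a meridian arc and the geometry collapses.
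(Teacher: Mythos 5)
Your proof is correct and is, in substance, the same elementary argument as the paper's: star-shapedness by the triangle inequality, rigidity of the saturated case via the fact that minimizers from the pole $\tilde o$ are meridians, Lemma~\ref{circle}(1) for the angular behaviour, and the identity $d(\tilde p,e^{\pm}(u))=a-u$ for (3) and (4). Two of your sub-arguments differ slightly from the paper's, both acceptably: for existence and uniqueness of $E(\tilde o,\tilde p\,;a)\cap[\theta=\varphi]$ the paper shows $r\mapsto d(\tilde p,(r,\varphi))+r$ is strictly increasing along each meridian by the first variation formula, whereas you deduce that $B\cap[\theta=\varphi]$ is an initial interval from star-shapedness and then must invoke the strict statement $T(\tilde o,\tilde q)\cap E=\{\tilde q\}$ from part (1) to rule out further intersections (star-shapedness alone only gives $\le a$ below the boundary point, so be explicit that you are using part (1) here); and for monotonicity of $r(\varphi)$ your direct exclusion $(r_1,\theta_2)\notin B$ is cleaner than the paper's injectivity-plus-continuity route. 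The one imprecision is in the rigidity step of (1): when the forced meridian arc $T(\tilde p,\tilde q)$ runs through $\tilde o_1$ rather than toward $\tilde o$, the collapse gives $a=2\ell-d(\tilde o,\tilde p)$, not $a=d(\tilde o,\tilde p)$; both values are excluded by the standing hypothesis $d(\tilde o,\tilde p)<a<2\ell-d(\tilde o,\tilde p)$, so the conclusion stands, but the case analysis should record both outcomes.
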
 

To be seen in Example \ref{Cylinder}, the third statement of (1) is not true, in general, for a complete pointed Riemannian manifold $(M, o)$.
Namely, $T(p, q)\cup T(q,o)$ may be a geodesic segment from $p$ to $o$ via $q$ with $q \in E(o, p \, ; a)$. 
 
\begin{proof} 
Let $\tilde q \in B(\tilde o, \tilde p \, ; a)$ and let $\tilde q' \in T(\tilde p, \tilde q)$. We then have 
\begin{eqnarray*} 
d(\tilde o, \tilde q') + d(\tilde p, \tilde q') &=& d(\tilde o, \tilde q') + 
d(\tilde p, \tilde q) - d(\tilde q, \tilde q') \\ 
& \le & d(\tilde o, \tilde q) + d(\tilde p, \tilde q) \le a. 
\end{eqnarray*} 
This means that $\tilde q' \in B(\tilde o, \tilde p \, ; a)$, and, hence, $T(\tilde p, \tilde q) \subset 
B(\tilde o, \tilde p \, ; a)$. In the same way we have $T(\tilde o, \tilde q) \subset B(\tilde o, \tilde p \, ; a)$. These prove the first part of (1). 

Suppose there exists a point $\tilde q' \in E(o, p \, ; a) \cap T(\tilde p, \tilde q) \smallsetminus \{ \tilde q \}$. 
Then the above inequality shows that $d(\tilde o, \tilde q')=d(\tilde o, \tilde q)+d(\tilde q, \tilde q')$.
From this we have $T(\tilde p, \tilde q)\cap T(\tilde o, \tilde q)\supset T(\tilde q', \tilde q)$, meaning that $T(\tilde p, \tilde q)\cup T(\tilde q, \tilde o)$ is a geodesic connecting $\tilde p$ and $\tilde o$ which is different from the meridian passing through $\tilde p$, a contradiction.
This proves the second part of (1).

The third part of (1) is obvious. In fact, if $\angle \tilde p \tilde q \tilde o = \pi$, then $d(\tilde p, \tilde q) + d(\tilde q, \tilde o)+ d(\tilde o, \tilde p)=2\ell$, meaning that $a=2\ell - d(\tilde o, \tilde p)$, a contradiction.

Notice that the function $f(r)=d(\tilde p, (r,\varphi))+r$ is monotone increasing in $r \in (0, \ell)$ with $f(r) > r(\tilde p)$ because of the first variation formula. Since $\sup \{ f(r) \, | \, r \in (0, \ell) \}=2\ell - d(\tilde o, \tilde p)$, we have the first part of (2). 

Let $\varphi_1$ and $\varphi_2$ be such that $\theta (\tilde p) \le \varphi_1 < \varphi_2 \le \theta (\tilde p)+\pi$. If $r(\varphi_1)=r(\varphi_2)$, then $d(\tilde p, (r(\varphi_1),\varphi_1))=d(\tilde p, (r(\varphi_2),\varphi_2))$, which contradicts Lemma \ref{circle} (1).
By the triangle inequality,
\begin{eqnarray*}
r(\varphi) &\le& d(\tilde o, \tilde p) + d(\tilde p, (r(\varphi), \varphi )) \\
&=& d(\tilde o, \tilde p) + a - r(\varphi),
\end{eqnarray*}
and, hence, we have $r(\varphi) \le (a + d(\tilde o, \tilde p))/2$ where the equality holds if and only if $\varphi = \theta (\tilde p)$.
These imply the other parts of (2) and (3).

If $b \in (a-r(\theta (\tilde p)), a-r(\theta (\tilde p)+\pi))$, then each curve $(r(\varphi), \varphi)$ for $\varphi \in [\theta (\tilde p)-\pi, \theta (\tilde p)]$ and $[\theta (\tilde p), \theta (\tilde p)+\pi]$ moves from the outside of $S(\tilde p, b)$ to its inside and from its inside to its outside, respectively. The property (2) of this lemma implies that the crossing point is unique in each curve, which proves the first part of (4).

In order to prove the second part of (4), let $b= (a-d(\tilde o, \tilde p))/2$ and $\tilde q \in S(\tilde p, b)$. We then have
\begin{eqnarray*}
d(\tilde o, \tilde q)+d(\tilde p, \tilde q) \le d(\tilde o, \tilde p) + 2d(\tilde p, \tilde q) =a
\end{eqnarray*}
and equality holding if and only if $d(\tilde o, \tilde p)+d(\tilde p, \tilde q) = d(\tilde o, \tilde q)$. 
This means that $S(\tilde p, b) \subset B(\tilde o, \tilde p \, ; a)$ and  $S(\tilde p, b) \cap E(\tilde o, \tilde p \, ; a )=\{ (r(\theta (\tilde p)), \theta (\tilde p)) \}$.

Let $b=d(\tilde o, \tilde q_1)$ and $\tilde q \in S(\tilde o, b) $, namely $d(\tilde o, \tilde q)=d(\tilde o, \tilde q_1)$. We then have, from Lemma \ref{circle} (1),
\begin{eqnarray*}
d(\tilde o, \tilde q)+d(\tilde p, \tilde q) \le d(\tilde o, \tilde q_1)+d(\tilde p, \tilde q_1) =a,
\end{eqnarray*}
and equality holding if and only if $\tilde q=\tilde q_1$. This proves the third part of (4). 
\end{proof} 

Let $\tilde p \not= \tilde o , \, \tilde o_1$ and $d(\tilde o, \tilde p) < a < 2\ell - d(\tilde o, \tilde p)$. 
The reference curves will be made in $\widetilde M_{\tilde p}^+$, so we work in $\widetilde M_{\tilde p}^+$.
From Lemma \ref{ellipse} the set $\Omega (\tilde o, \tilde p \, ; a)^{+}$ of all minimizing geodesic segments $T(\tilde p, \tilde q)$ from $\tilde p$ to points $\tilde q \in E(\tilde o, \tilde p \, ; a)^+$ is a totally ordered set with respect to the binary relation $\le $ in the set of curves in $\widetilde M^{+}_{\tilde p}$. 
The minimizing geodesic segments $T(\tilde p, \tilde q)$, $\tilde q \in E(\tilde o, \tilde p \, ; a)^+$, divide $B(\tilde o, \tilde p \, ; a)^+$ into two domains. 
Let $U(\tilde p, \tilde q)$ denote the greatest minimizing geodesic segment connecting $\tilde p$ and $\tilde q$ and $L(\tilde p,\tilde q)$ the least one. Namely $L(\tilde p, \tilde q) \le T(\tilde p, \tilde q) \le U(\tilde p, \tilde q)$ for every minimizing geodesic segment $T(\tilde p, \tilde q)$. If $\tilde q \not\in Cut(\tilde p)$, then $U(\tilde p, \tilde q)=L(\tilde p, \tilde q)$. 
If $U(\tilde p, \tilde q)\not=L(\tilde p, \tilde q)$ for a point $\tilde q \in Cut(\tilde p)^+$, then $B(\tilde o, \tilde p \, ; a)^+$ is divided into three domains $B_1$, $B_0$ and $B_2$. 
Here $B_1$ is the domain bounded by the meridian $[\theta = \theta (\tilde p)]$, $E(\tilde o, \tilde p \, ; a)^+$ and $U(\tilde p, \tilde q)$, $B_0$ is the biangle domain bounded by $U(\tilde p, \tilde q)$ and $L(\tilde p, \tilde q)$, $B_2$ is the domain bounded by the meridians $[\theta =\theta (\tilde p)]\cup [\theta = \theta (\tilde p)+ \pi ]$, $E(\tilde o, \tilde p \, ; a)^+$ and $L(\tilde p, \tilde q)$.

\begin{lem}\label{SegEllipse}
Let $\tilde p \not= \tilde o , \, \tilde o_1$ and $d(\tilde o, \tilde p) < a < 2\ell - d(\tilde o, \tilde p)$. Let $\tilde q \in E(\tilde o, \tilde p \, ; a)^+$. 
Let $\tilde q'$ be a sequence of points in $E(\tilde o, \tilde p \, ; a)^+$ such that $r(\tilde q') > r(\tilde q)$ $(resp., r(\tilde q') < r(\tilde q))$ and it converges to $\tilde q$.
Then the sequence of segments $T(\tilde p, \tilde q')$ converges to $U(\tilde p, \tilde q)$ $(resp., L(\tilde p, \tilde q))$.
\end{lem}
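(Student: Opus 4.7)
The plan is to extract subsequential limits of the minimizing segments $T(\tilde p, \tilde q')$ and identify each with $U(\tilde p, \tilde q)$ in the case $r(\tilde q') > r(\tilde q)$; the case $r(\tilde q') < r(\tilde q)$ is symmetric and yields $L(\tilde p, \tilde q)$. By Lemma \ref{ellipse}(2), $r(\tilde q') > r(\tilde q)$ is equivalent to $\theta(\tilde q') < \theta(\tilde q)$ in $\widetilde M_{\tilde p}^+$ (the degenerate case $\theta(\tilde q) = \theta(\tilde p)$ would force $U = L$ to be the meridian segment and the assertion trivial, so I assume $\theta(\tilde p) < \theta(\tilde q') < \theta(\tilde q)$). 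I will exploit this strict inequality to compare $T(\tilde p, \tilde q')$ with $U(\tilde p, \tilde q)$ inside the totally ordered family $\Omega(\tilde o, \tilde p \, ; a)^+$.

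First I reparametrize each $T(\tilde p, \tilde q')$ on $[0,1]$ and apply Arzel\`a--Ascoli; the uniform Lipschitz bound coming from $d(\tilde p, \tilde q') \to d(\tilde p, \tilde q)$ produces a sub-subsequence converging uniformly to a curve from $\tilde p$ to $\tilde q$, and lower semicontinuity of length together with the minimality of each $T(\tilde p, \tilde q')$ forces this limit to be a minimizing geodesic segment $T_*(\tilde p, \tilde q)$. Since $U(\tilde p, \tilde q)$ is by definition the greatest element of $\Omega(\tilde o, \tilde p \, ; a)^+$, we immediately have $T_* \le U(\tilde p, \tilde q)$, and the task reduces to proving the reverse inequality.

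The key step is to show $T(\tilde p, \tilde q') \ge U(\tilde p, \tilde q)$ on the common $\theta$-range $[\theta(\tilde p), \theta(\tilde q')]$. Both segments belong to $\Omega$ and are therefore $\le$-comparable. At $\theta = \theta(\tilde q')$, $T(\tilde p, \tilde q')$ reaches the ellipse at $\tilde q'$ with $r$-coordinate $r(\tilde q')$, whereas the interior point $U(\tilde p, \tilde q) \cap [\theta = \theta(\tilde q')]$ sits strictly inside $B(\tilde o, \tilde p \, ; a)^+$: by Lemma \ref{ellipse}(1) the interior of $U$ avoids $E(\tilde o, \tilde p \, ; a)$, and the star-shapedness of $B$ around $\tilde o$ together with Lemma \ref{ellipse}(2) identifies the ellipse as the outer $r$-boundary of $B$ along each meridian. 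Hence the $r$-value of $U$ at $\theta(\tilde q')$ is strictly smaller than $r(\tilde q')$, which rules out $T(\tilde p, \tilde q') \le U(\tilde p, \tilde q)$; the total order forces the reverse. Fixing any $\theta_0 \in [\theta(\tilde p), \theta(\tilde q))$ and letting $\tilde q' \to \tilde q$, we have $\theta(\tilde q') > \theta_0$ eventually, and uniform convergence turns the inequality $r(T(\tilde p, \tilde q') \cap [\theta = \theta_0]) \ge r(U(\tilde p, \tilde q) \cap [\theta = \theta_0])$ into the analogous inequality for $T_*$; combined with $T_* \le U$ this forces $T_* = U(\tilde p, \tilde q)$. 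Since every subsequential limit equals $U$, the full sequence converges to $U(\tilde p, \tilde q)$. The main hurdle is the strict inequality of $r$-values at $\theta(\tilde q')$; once this is in place, the remainder is bookkeeping that flows directly from Lemma \ref{ellipse}.
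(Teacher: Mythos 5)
Your proof is correct and follows essentially the same route as the paper: extract a subsequential limit, observe it is a minimizing segment from $\tilde p$ to $\tilde q$, and use Lemma \ref{ellipse} to place the approximating segments $T(\tilde p,\tilde q')$ on the $U$-side so that the limit can only be $U(\tilde p,\tilde q)$ (resp.\ $L(\tilde p,\tilde q)$). The only difference is cosmetic: you derive $T(\tilde p,\tilde q')\ge U(\tilde p,\tilde q)$ from the total order on $\Omega(\tilde o,\tilde p\,;a)^{+}$ together with the strict $r$-comparison on the single meridian $[\theta=\theta(\tilde q')]$, whereas the paper obtains the same positional fact from the star-shapedness of the domains $B_1$ and $B_2$ around $\tilde p$.
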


\begin{proof}
A subsequence of the sequence $T(\tilde p, \tilde q')$ converges to a minimizing geodesic segment $T(\tilde p, \tilde q)$. 
Since $B_1$ and $B_2$ are star-shaped around $\tilde p$, it follows that $T(\tilde p, \tilde q')$ is contained in either $B_1$ or $B_2$, depending on $r(\tilde q') > r(\tilde q)$ or $r(\tilde q') < r(\tilde q)$. From the definition of $U(\tilde p, \tilde q)$ and $L(\tilde p, \tilde q)$, it follows that $T(\tilde p, \tilde q)$ is one of $U(\tilde p, \tilde q)$ and $L(\tilde p, \tilde q)$. This shows that the sequence of minimizing geodesic segments  $T(\tilde p, \tilde q')$ converges to either $U(\tilde p, \tilde q)$ or $L(\tilde p, \tilde q)$.
\end{proof}

We now discuss the property of ellipses in $M$, which includes new ideas and play an important role.
The following lemma gives a sufficient condition for $q \not\in E_p(r)$, namely $q$ is not a local maximum point of the distance function $d_r$ to $o$ restricted to $E(o,p \, ;r)$. 

\begin{lem}\label{notmaximum}
Let $q \in E(o,p \, ; r) \subset M$. If there exists a point $u$ such that 
$d(p,u)+d(o,u) > r$ and $d(p,u)-d(p,q) < d(o,u)-d(o,q)$, 
then there exists a point $q' \in E(o,p \, ;r)$ such that $d(o,q') > d(o,q)$. In 
particular, if $q \not\in Cut(o)$ and $p \not\in T(o,q)$, then $q$ is not a local 
maximum point of $d_r$ on $E(o,p \, ;r)$. 
\end{lem}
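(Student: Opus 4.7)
The plan is to produce the required $q'$ as the first point at which a minimizing geodesic from $p$ to $u$ crosses the ellipsoid $E(o,p;r)$. Setting $w(s):=T(p,u)(s)$ and $\phi(x):=d(o,x)+d(p,x)$, one has $\phi(w(0))=d(o,p)<r<\phi(u)=\phi(w(d(p,u)))$, so by continuity $s^*:=\sup\{s:\phi(w(s))\le r\}$ lies in $(0,d(p,u))$ and satisfies $\phi(w(s^*))=r$; thus $q':=w(s^*)\in E(o,p;r)$, with $d(p,q')=s^*$ and $d(o,q')=r-s^*$, and it suffices to show $s^*<d(p,q)$.

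To bound $s^*$, the triangle inequality along $w$ gives $d(o,w(s^*))\ge d(o,u)-(d(p,u)-s^*)$. Substituting this into $\phi(w(s^*))=r$ yields $s^*\le(r+d(p,u)-d(o,u))/2$. Rewriting the hypothesis $d(p,u)-d(p,q)<d(o,u)-d(o,q)$ as $d(p,u)-d(o,u)<2d(p,q)-r$ sharpens this bound to $s^*<d(p,q)$, giving $d(o,q')>d(o,q)$ and completing the first assertion.

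For the ``in particular'' part, since $q\notin Cut(o)$ the minimizer $T(o,q)$ extends past $q$, and one puts $u(s):=T(o,q)(d(o,q)+s)$ for small $s>0$, so that $d(o,u(s))=d(o,q)+s$ and $\phi(u(s))\in[r,r+2s]$ by the triangle inequality. If $\phi(u(s))=r$ for some small $s$, then $u(s)\in E(o,p;r)$ already has $d(o,u(s))>d(o,q)$, arbitrarily near $q$. Otherwise $\phi(u(s))>r$, and the strict inequality $d(p,u(s))<d(p,q)+s$ must hold: equality would make $T(p,q)\cup T(q,u(s))$ a smooth minimizing geodesic, aligning the arrival tangent of $T(p,q)$ at $q$ with the outward radial direction $T(o,q)^\cdot(d(o,q))$ and forcing $p\in T(o,q)$, contrary to the hypothesis. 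Applying the first assertion produces $q'(s)\in E(o,p;r)$ with $d(o,q'(s))>d(o,q)$, and since the bound $s^*(s)\le(r+d(p,u(s))-d(o,u(s)))/2$ tends to $d(p,q)$ as $s\to 0$, combined with a first-variation check that $\phi$ is strictly increasing along any minimizer $T(p,q)$ as it approaches $q$ (again using $p\notin T(o,q)$ in this second case), one deduces $s^*(s)\to d(p,q)$, so $q'(s)\to q$ and $q$ is not a local maximum of $d_r$.

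The main technical obstacle is the rigidity step together with the convergence $q'(s)\to q$: both require distinguishing the case in which $p$ lies on the extension of $T(o,q)$ past $q$ from the case in which it does not, and leveraging $p\notin T(o,q)$ together with $r>d(o,p)$ to keep the relevant tangent vector at $q$ off the radial directions, so that the smoothness forced by a concatenation of minimizers along a total minimizer yields either the desired contradiction or the needed first-variation positivity.
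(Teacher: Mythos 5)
Your proof of the first assertion is correct, and it takes a slightly cleaner route than the paper's: by defining $q'$ as the last crossing of $T(p,u)$ with $E(o,p\,;r)$ and bounding $s^*$ directly through $d(o,w(s^*))\ge d(o,u)-(d(p,u)-s^*)$, you get $s^*\le (r+d(p,u)-d(o,u))/2<d(p,q)$ straight from the two hypotheses, whereas the paper first proves that $E(o,p\,;r)\cap T(p,u)$ is a single point and then splits into cases. That part stands as written.

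The ``in particular'' part, however, has a genuine gap at the rigidity step. From $d(p,u(s))=d(p,q)+s$ you correctly conclude that $T(p,q)\cup T(q,u(s))$ is minimizing, hence smooth at $q$, so every minimizing segment from $q$ to $p$ leaves $q$ with the same initial tangent $-T(o,q)^{\cdot}(d(o,q))$ as $T(q,o)$. But this only forces $p$ and $o$ to lie on one geodesic emanating from $q$: when $d(q,o)<d(q,p)$ the conclusion is $o\in\mathrm{Int}\,T(p,q)$, not $p\in T(o,q)$, and neither $p\notin T(o,q)$ nor $r>d(o,p)$ rules this out. Concretely, in $\mathbb{R}^2$ with $o=(0,0)$, $p=(-1,0)$, $q=(2,0)$, $r=5$, all hypotheses of the ``in particular'' statement hold, yet $d(p,u(s))=d(p,q)+s$ for every $s>0$; your strict inequality fails, the first assertion cannot be applied to $u(s)$, and no contradiction with $p\notin T(o,q)$ arises (here $q$ is in fact the minimum of $d_r$, so the lemma is true but your argument does not reach it). This is exactly the equality case $d(p,u)=d(p,q)+d(q,u)$ that the paper isolates: there $o\in T(p,q)\subset T(p,u)$, hence $q\notin Cut(p)$, and one must abandon the radial extension and instead take $u'$ on the distance sphere $S(p,d(p,q))$ near $q$, for which $d(p,u')-d(p,q)=0<d(o,u')-d(o,q)$ and $d(p,u')+d(o,u')>r$, and then feed those $u'$ into the first assertion. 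Your convergence claim $q'(s)\to q$ is also only sketched, but the dichotomy needed there (either $s^*(s)\to d(p,q)$, or a subsequential limit of the segments is a minimizing segment ending at $q$ whose terminal subarc lies in $E(o,p\,;r)$ with $d(o,\cdot)>d(o,q)$) is the same one the paper uses and is recoverable; the missing treatment of the collinear case $o\in\mathrm{Int}\,T(p,q)$ is the real defect.
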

 
We observe that the assumption $d(p,u)+d(o,q) < d(p,q)+d(o,u)$ means that $d(o, T(q,u)(t))$ increase further than $d(p, T(q,u)(t))$ for  $t \in [0,d(q,u)]$.
\begin{proof} 
We first prove that the set $E(o,p \, ; r)\cap T(p,u)$ consists of a single 
point, say $q'$. Suppose there exists a point $q'' \in E(o,p \, ; r)\cap T(p,u)$ with 
$q''\not=q'$. Assume without loss of generality that $p, q', q'', u$ are in this 
order in $T(p,u)$. Since 
\begin{eqnarray*} 
d(p,q')+d(o,q') &=& r \\ 
&=& d(p,q'')+d(o,q'') \\ 
&=& d(p,q')+d(q',q'')+d(o,q''), 
\end{eqnarray*} 
we have $d(o,q')=d(q',q'')+d(o,q'')$. This means that the minimizing geodesic segment 
$T(q', q'')$ is contained in both segments $T(o,q')$ and $T(p,u)$. In 
particular, $u \in T(o,q')$. This is a contradiction, because 
\begin{eqnarray*} 
r < d(p,u)+d(o,u) &=& d(p,q'')+d(q'',u)+d(o,u) \\ 
&=& d(p,q'')+d(o,q'') = r. 
\end{eqnarray*}

We should note that $q' \not= q$. In fact, if $q'=q$, we then have 
\begin{eqnarray*} 
d(q',u) &=& d(p,u) -d(p,q') \\
&=& d(p, u) - d(p, q) \\ 
& < & d(o,u) - d(o,q') \le d(q',u), 
\end{eqnarray*} 
a contradiction. 
 
We next prove that $d(o,q') > d(o,q)$. If $d(p,q') < d(p,q)$, we have 
\begin{eqnarray*} 
d(o,q') &=& r-d(p,q') \\ 
&>& r-d(p,q) \\ 
&=& d(o,q). 
\end{eqnarray*} 
Thus we suppose $d(p,q') \ge d(p,q)$. We then have 
\begin{eqnarray*} 
d(o,q') &\ge& d(o,u) - d(u,q') \\ 
&=& d(o,u) - d(p,u)+d(p,q') \\ 
&\ge& d(o,u)-d(p,u)+d(p,q) \\ 
&>& d(o,u)- d(o,u)+d(o,q) \\ 
&=& d(o,q). 
\end{eqnarray*} 
This completes the proof of the first part of the lemma. 
 
Assume that $q \not\in Cut(o)$ and $p \not\in T(o,q)$. Let $u$ be a point such that $q \in T(o,u)$ with $u \not= q$. We have
\begin{eqnarray*} 
d(p,u)+d(o,u) &=& d(p,u)+d(u,q)+d(q,o) \\ 
&\ge& d(p,q) + d(o,q) = r,
\end{eqnarray*}
where the equality holds if and only if $d(p,q)=d(p,u)+d(u,q)$.
By the triangle inequality, we have
\begin{eqnarray*} 
d(p,u)-d(p,q) \le d(q,u) = d(o,u)-d(o,q), 
\end{eqnarray*}
where the equality holds if and only if $d(p,u)=d(p,q)+d(q,u)$. \par

In order to prove that $q$ is not a local maximum point of $d_r$ on $E(o,p \, ;r)$, we have to discuss the equality cases. Suppose first that $d(p,u)+d(u,q)=d(p,q)$.
Then, $T(u,q) \subset T(p,q)\cap T(o,u)$, which means that $T(u,q) \subset E(o,p \, ; r)$. 
Namely, $T(p,q)\cup T(u,o)$ is a geodesic in $M$ connecting $p$ and $o$ which is not minimizing such that the subsegment from $u$ to $q$ is contained in $E(o, p \, ; r)$.
Such a geodesic will be seen in Example \ref{Cylinder}.
Since every point $q' \in T(q,u)\smallsetminus \{ q\}$ satisfies that $d(o,q') > 
d(o,q)$, the point $q$ is not a local maximal point of the function $d_r$. 

We next suppose $d(p,u)=d(p,q)+d(q,u)$. 
Then, $T(q,u) \subset T(p,u)\cap T(o,u)$, 
Since $p \not\in T(o,q)$, we have $o \in T(p,q) \subset T(p,u)$. 
Since $q \not\in Cut (p)$, the set 
$S(p,d(p,q))=\{ u' \, | \, d(p,u')=d(p,q) \}$ contains a set $U$ around $q$ which is homeomorphic to a disk with dimension $\dim M - 1$ and any point $u' \in U$ with 
$u' \not= q$ satisfies $d(o,u') > d(o,q)$. 
In fact, $d(o, u') > | d(p,u') - d(p, o) | = | d(p, q) - d(p, o) |=d(o, q)$.
Thus, we can find a point $u'$ near $q$ satisfying the assumption 
in the first part of the lemma, namely $d(p,u')+d(o,u') > r$ and $d(p,u')-d(p,q) < d(o,u')-d(o,q) $.
From these arguments we may assume without loss of generality that there exists a point $u'$ near $q$ satisfying the assumption in the first part.

It remains to find a point $q'' \in E(o,p \, ; r)$ near $q$ such that $d(o,q'') > d(o,q)$.
Let $u'$ be a sequence of points satisfying the assumption 
in the first part of the lemma and converging to $q$. Let $q'(u')=E(o,p \, ; r)\cap T(p,u')$ which satisfies $d(o,q'(u')) > d(o,q)$. 
The sequence of minimizing geodesic segments $T(q'(u'),u')$ converges to the point $q$ or it contains a subsequence converging to a minimizing geodesic segment $T(q',q)$ contained in $E(o,p \, ; r)$ as $u'$ goes to $q$. 
When the first case occurs, the existence of $q'(u')$ shows that $q$ is not a local maximum point of $d_r$. Suppose the second case happens.
If $q'' \in T(q',q)$, we then have 
\begin{eqnarray*}
d(o,q'') &=& r - d(p,q'') \\
&=& r - ( d(p,q) - d(q'',q)) \\
&>& r - d(p,q) = d(o,q).
\end{eqnarray*}
This implies 
that $q$ is not a local maximum point of $d_r$ on $E(o,p \, ; r)$. This completes the 
proof. 
\end{proof} 

The following example is helpful to understand what happens on ellipses as being larger. It should be noted that there exists a point $q \in E(o, p \, ; r)$ which cannot be an accumulation point of interior points of $B(o, p \, ; r)$.

\begin{exa}\label{Cylinder}
{\rm
We study how ellipses change in a flat cylinder as being larger.
Let $M=\{ (x,y,z) \in \mathbb{E}^3 \, | \, x^2+z^2=1 \}$. Let $o=(1, 0, 0)$ and $p=(0,2,-1)$. 
Then $Cut(o)=\{ (-1,y,0) \, | \, y \in \mathbb{R} \}$ and $Cut(p)=\{ (0,y,1) \, | \, y \in \mathbb{R} \}$. 
We identify $M$ with $\mathbb{E}^2/\Gamma $ where $\mathbb{E}^2 =\{ (x,y) \, | \, x, y \in \mathbb{R} \}$ and $\Gamma $ is the isometry group generated by a translation $\tau$ such that $\tau ((x,y))= (x,y+2\pi)$. 
The universal covering space $\pi : \mathbb{E}^2 \rightarrow M$ is given by $\pi ((y,\theta))= (\cos \theta ,y,\sin \theta)$.
The tangent plane $M_o$ is idetified with $\mathbb{E}^2$ also. 
Then $\widetilde C(o)=\{ (x,\pm\pi) \, | \, x \in \mathbb{R} \}$ is the tangent cut locus of $o$ and $U=\{ (x,y) \, | \, x \in \mathbb{R}, -\pi \le y \le \pi \}$ is the lift of the normal coordinate neighborhood of $o$, namely $\exp_o : U \smallsetminus \widetilde C(o) \longrightarrow M \smallsetminus Cut(o)$ is a diffeomorphism. 
If $\varphi = \exp_o|\,U$, then $\varphi^{-1}(o)=(0,0)=:o_0$ and $\varphi^{-1} (p)=(2,-\pi/2)=:p_0$ by this identification. 
Set $p_1=(2, 3\pi /2 )$. Further, $\varphi^{-1}(Cut(p))=\{ (x, \pi/2) \, | \, x \in \mathbb{R} \}$. Let $E(o,p \, ;r)=\{ w \, | \, F(w):=d(o,w)+d(p,w)=r \}$ and $B(o,p \, ;r)=\{ w  \, | \, F(w) \le r \}$ for each $r > d(o,p)$. 

Set $r_0=\min \{ F(w) \, | \, w \in Cut(o) \}$, $\{ a \}=E(o_0,p_0 \, ;r_0)\cap T(o_0,p_1)$, $\{ q_{1} \}=\widetilde C(o)\cap T(o_0,p_1)$. Let $\partial X$ denote the boundary of a subset $X$. 
Then $\varphi^{-1}(E(o,p \, ;r))$ changes for $r$ as follows.
\begin{enumerate}
\item $\varphi^{-1}(E(o,p \, ;r))=E(o_0,p_0;r)$ if $r$ satisfies $d(o,p) < r <r_0$.
\item $\varphi^{-1}(E(o,p \, ;r_0))=E(o_0,p_0 \, ;r_0)\cup T(a,q_{1})$.
\item $\varphi^{-1}(E(o,p \, ;r))=\partial (B(o_0,p_0;r)\cup B(o_0,p_1 \, ; r))\cap U$ if $r$ satisfies $r > r_0$.
\end{enumerate}
Let $q_{2}=\tau^{-1}(q_{1})$. If $q \in Cut(o)$ satisfies $F(q)=\min F \, | \, Cut(o)$, then $\varphi^{-1}(q)=\{ q_{1}, q_{2} \}$.
Moreover, $\varphi (T(o_0,q_{1})\cup T(q_{2},p_0))$ is a geodesic connecting $o$ and $p$ in $M$. The geodesic reflecting against $Cut(o)$ at $q$ in $M$ is identified with $\varphi (T(o_0,q_{2})\cup T(q_{2},p_0))$.
}
\end{exa}

It should be remarked that any sequence of points $q'_j$ such that $q'_j \in E(o,p \, ; r_j)$ for $r_j < r_0$ with $r_j \rightarrow r_0$ cannot converge to any point in $\varphi (T(a,q_{1})\smallsetminus \{ a, q_{1} \}) \subset E(o,p \, ; r_0)$.
Thus, we notice that there exists a geodesic triangle $\triangle opq$ with $q \in E(o, p \, ; r_0)$ such that it admits no sequence of geodesic triangles $\triangle opq_j$ with $q_j \in E(o, p \, ; r_j)$, $r_j < r_0$, converging to itself.

\section{Reference curves}
Let $(\widetilde M, \tilde o)$ be a surface of revolution with vertex $\tilde o$.
Throughout this section, we do not assume that $(M,o)$ is referred to $(\widetilde M, \tilde o)$ and $F_p(E(p))\cap 
\widetilde F_{\tilde p}(Cut(\tilde p)\cap {\rm Int}(\widetilde M_{\tilde 
p}^+))=\emptyset $. 
However, we assume that every minimizing geodesic segment $T(p,q)$ in consideration is contained in $F_p{}^{-1}( \widetilde F_{\tilde p}(\widetilde M_{\tilde p}^+))$. Therefore, $\widetilde T(p, q)(t)$ is 
defined for all $t \in [0, d(p,q)]$.
\begin{lem}\label{basic}
Let $\tilde q(t)=\widetilde T(p,q)(t)$, $0 \le t \le d(p,q)$. 
Let I denote the set of all parameters $t \in [0, d(p,q)]$ such that $\tilde q(t) \in {\rm Int}(\widetilde M^+_{\tilde p})$. The curves $\widetilde T(p,q)$ and $\widetilde R(p,q)$ satisfy the following 
properties. 
\begin{enumerate} 
\item[(1)] $I$ is an interval. $\theta (\tilde q(t))$ is monotone increasing for $t \in I$. More precisely, if $t_0:=\max \{ t \in [0, d(p,q)] \, | \, \theta (\tilde q(t))=\theta (\tilde p) \} > 0$, we then have two possibilities: 
If $\tilde q(t_0) \in T(\tilde p, \tilde o)$, then $T(p,q)$ is contained in the maximal minimizing geodesic segment $T_e(p,o)$ from $p$ through $o$ and $\widetilde T(p,q)$ is contained in the union of meridians $[\theta = \theta (\tilde p)]\cup [\theta = \theta (\tilde p)+\pi]$.
If $\tilde p \in T(\tilde o, \tilde q(t_0))$, then $T(p,q(t_0)) = T(o, q(t_0))\cap T(p,q)$ and $\widetilde T(p, q)([0,t_0])$ is contained in the meridian through $\tilde p$.
In addition, if $t'_0 :=\min \{ t \in [0, d(p,q)] \, | \, \theta (\tilde q(t))=\theta (\tilde p)+\pi \} < d(p,q)$, we then have the similar results as above by using $q$ and $\tilde q$ instead of $p$ and $\tilde p$.
\item[(2)] $\widetilde R(p,q)(t)$ is defined for all $t \in [0,d(p,q)]$.
\item[(3)] $d(\tilde p, \widetilde T(p,q)(t))=t$, $d(\tilde q, \widetilde 
R(p,q)(t))=t$,\; $0 \le t \le d(p,q)$. 
\item[(4)] $d(\tilde p, \widetilde T(p,q)(t))+d(\tilde q, \widetilde 
R(p,q)(d(p,q)-t))=d(p,q)=d(\tilde p, \tilde q)$. 
\item[(5)] $r(\widetilde T(p,q)(t))=r(\widetilde R(p,q)(d(p,q)-t))$.
\item[(6)] $\widetilde T(p,q) \supset \widetilde T(p,q')$ and $\widetilde R(p, q) \supset \widetilde R(q', q)$ for any point $q' \in T(p,q)$.
\end{enumerate} 
\end{lem}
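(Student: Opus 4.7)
My plan is to derive parts (3)--(6) directly from the definitions, prove the monotonicity statement in (1) using Lemma \ref{circle}, and finally deduce (2) from (1) together with a triangle-inequality argument in $\widetilde M$.

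Parts (3)--(6) are essentially bookkeeping. Because $T(p,q)$ is a unit-speed minimizing geodesic, $F_p(T(p,q)(t))=(d(o,T(p,q)(t)),\,t)$, so by the definition of $\widetilde T$ the point $\widetilde T(p,q)(t)=\widetilde F_{\tilde p}{}^{-1}(F_p(T(p,q)(t)))$ lies in $\widetilde M^+_{\tilde p}$ at distance $t$ from $\tilde p$ and distance $d(o,T(p,q)(t))$ from $\tilde o$; this gives (3) and (5). Combining (3) for both $\widetilde T$ and $\widetilde R$ with $d(\tilde p,\tilde q)=d(p,q)$ yields (4). For (6), if $q'=T(p,q)(s)$, then $T(p,q')$ is the restriction $T(p,q)|_{[0,s]}$ and the reference map is defined pointwise, so $\widetilde T(p,q')=\widetilde T(p,q)|_{[0,s]}$; the identity for $\widetilde R$ uses the symmetric restriction from $q$.

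The heart of the lemma is (1). By (3) and (5), $\widetilde T(p,q)(t)$ is characterized as the unique intersection, inside $\widetilde M^+_{\tilde p}$, of the parallel circle $[r=d(o,T(p,q)(t))]$ with the metric circle $S(\tilde p\,;t)$. Lemma \ref{circle}(1) states that along any parallel circle in $\widetilde M^+_{\tilde p}$ the distance from $\tilde p$ is strictly increasing in $\theta-\theta(\tilde p)$. Combined with the continuity of $\widetilde T(p,q)$ and the strict monotonicity of $t\mapsto d(\tilde p,\widetilde T(p,q)(t))=t$, this forces $\theta(\tilde q(t))$ to be monotone non-decreasing in $t$ as long as $\widetilde T(p,q)(t)$ stays in the interior; any failure of monotonicity would force the curve to first meet one of the meridians $[\theta=\theta(\tilde p)]$ or $[\theta=\theta(\tilde p)+\pi]$, which also shows $I$ is an interval. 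The degenerate case at $t_0$ occurs when $\widetilde T(p,q)(t_0)$ meets the meridian $[\theta=\theta(\tilde p)]$; by (3) and (5) this happens exactly when $d(o,T(p,q)(t_0))=d(o,p)\pm t_0$. The $-$ case ($r(\tilde q(t_0))<r(\tilde p)$, so $\tilde q(t_0)\in T(\tilde p,\tilde o)$) gives that $T(p,T(p,q)(t_0))\cup T(T(p,q)(t_0),o)$ realizes $d(o,p)$, hence is a minimizing geodesic from $p$ to $o$, and by uniqueness of geodesic extensions the whole of $T(p,q)$ must continue along the maximal minimizing extension $T_e(p,o)$ through $o$. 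The $+$ case ($r(\tilde q(t_0))>r(\tilde p)$, so $\tilde p\in T(\tilde o,\tilde q(t_0))$) forces $p\in T(o,q(t_0))$, giving $T(p,q(t_0))=T(o,q(t_0))\cap T(p,q)$. The analogous analysis at $t'_0$ is obtained by interchanging the roles of $p,\tilde p$ and $q,\tilde q$.

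For (2), fix $t\in[0,d(p,q)]$ and set $y:=T(p,q)(d(p,q)-t)$. We must exhibit $\tilde y^*\in\widetilde M^-_{\tilde q}$ with $d(\tilde o,\tilde y^*)=d(o,y)$ and $d(\tilde q,\tilde y^*)=d(q,y)=t$. Consider $\tilde y:=\widetilde T(p,q)(d(p,q)-t)\in\widetilde M^+_{\tilde p}$; by (1), $0=\theta(\tilde p)\le\theta(\tilde y)\le\theta(\tilde q)$, so $\tilde y\in\widetilde M^-_{\tilde q}$ as well. The triangle inequality in $\widetilde M$ combined with (3) yields
\[
 d(\tilde q,\tilde y)\ge d(\tilde p,\tilde q)-d(\tilde p,\tilde y)=d(p,q)-d(p,y)=d(q,y).
\]
On the parallel circle $[r=d(o,y)]$ restricted to $\widetilde M^-_{\tilde q}$, Lemma \ref{circle}(1) (applied on the reflected half) yields continuous strictly monotone variation of the distance to $\tilde q$ between $|d(o,y)-d(o,q)|\le d(q,y)$ at $\theta=\theta(\tilde q)$ and $d(\tilde q,\tilde y)\ge d(q,y)$ at $\theta=\theta(\tilde y)$. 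The intermediate value theorem produces the desired $\tilde y^*$. The main obstacle is the monotonicity of $\theta$ and the meridian-crossing analysis in part (1); once those are settled, (2) is a quick consequence of the triangle-inequality estimate above.
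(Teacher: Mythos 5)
Your parts (3)--(6) are the same bookkeeping as the paper's, and your argument for (2) is a genuinely different and rather cleaner route: the paper shows instead that once $\widetilde R(p,q)$ reaches the meridian through $\tilde p$ it must stay on it, whereas you construct the required point of $\widetilde M^-_{\tilde q}$ directly by the intermediate value theorem on the parallel arc $[r=d(o,y)]$, squeezing $d(q,y)$ between $|d(o,y)-d(o,q)|$ at $\theta=\theta(\tilde q)$ and $d(\tilde q,\tilde y)\ge d(\tilde p,\tilde q)-d(\tilde p,\tilde y)=d(q,y)$ at $\theta=\theta(\tilde y)$. That works, but only once part (1) is secured.

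The gap is in the monotonicity claim of (1), which you yourself identify as the heart of the lemma. You assert that Lemma \ref{circle}(1) together with the strict monotonicity of $t\mapsto d(\tilde p,\tilde q(t))=t$ forces $\theta(\tilde q(t))$ to be non-decreasing. It does not: $r(\tilde q(t))$ varies as well, and an increase in $r$ can increase the distance from $\tilde p$ while $\theta$ decreases. For instance, in the flat plane with $\tilde p=(r,\theta)=(1,0)$, the point $(3,\pi/6)$ is farther from $\tilde p$ than $(2,\pi/3)$, so both of your cited hypotheses are compatible with decreasing $\theta$. What rules this out for a reference curve is the $1$-Lipschitz bound
$|r(\tilde q(t_2))-r(\tilde q(t_1))|=|d(o,q(t_2))-d(o,q(t_1))|\le d(q(t_1),q(t_2))=t_2-t_1$,
coming from $T(p,q)$ being unit speed in $M$; you never invoke it, and it is precisely the input that drives the paper's equality chain. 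With it the claim follows: if $\theta(\tilde q(t_2))<\theta(\tilde q(t_1))$ for $t_1<t_2$, set $\tilde c=(r(\tilde q(t_1)),\theta(\tilde q(t_2)))$; then $d(\tilde p,\tilde c)<t_1$ by Lemma \ref{circle}(1), and $d(\tilde c,\tilde q(t_2))=|r(\tilde q(t_2))-r(\tilde q(t_1))|\le t_2-t_1$, so $t_2=d(\tilde p,\tilde q(t_2))<t_2$, a contradiction. The same omission undermines your assertions that a failure of monotonicity would force the curve onto the meridians $[\theta=\theta(\tilde p)]$ or $[\theta=\theta(\tilde p)+\pi]$ and that $I$ is an interval: the paper obtains these by observing that two parameters with equal $\theta$ make the whole chain an equality, placing $\tilde q(t_1)$ on $T(\tilde p,\tilde q(t_2))$ along their common meridian, which must therefore be the meridian through $\tilde p$ or its opposite. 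Your case analysis at $t_0$ and $t'_0$ is otherwise consistent with the paper's.
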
 
 
\begin{proof}
Let $q(t)=T(p,q)(t)$. We first prove that if there exist two parameters $t_1$ and $t_2$ such that $t_1 < t_2$ and $\theta (\tilde q(t_1))=\theta (\tilde q(t_2))$ or $\tilde q(t_1)=\tilde o$ (or $\tilde q(t_1)=\tilde o_1$ if $\ell < \infty$),  then $d(\tilde p, \tilde q(t_2)) = d(\tilde p, \tilde q(t_1)) + d(\tilde q(t_1),\tilde q(t_2))$, namely $\tilde q(t_1) \in T(\tilde p, \tilde q(t_2))$. In fact, since $\theta (\tilde q(t_1))=\theta (\tilde q(t_2))$ implies that $| r(\tilde q(t_2)) - r(\tilde q(t_1)) | = d(\tilde q(t_1),\tilde q(t_2))$, we have
\begin{eqnarray*}
d(\tilde p, \tilde q(t_2)) - d(\tilde p, \tilde q(t_1)) &=& d(p, q(t_2)) - d(p, q(t_1)) \\
&=& d(q(t_1),q(t_2)) \\
&\ge& | d(o,q(t_2))-d(o,q(t_1)) | \\
&=& | r(\tilde q(t_2)) - r(\tilde q(t_1)) | \\
&=& d(\tilde q(t_1),\tilde q(t_2)) \\
&\ge& d(\tilde p, \tilde q(t_2)) - d(\tilde p, \tilde q(t_1)),
\end{eqnarray*}
meaning that $\tilde q(t_1) \in T(\tilde p, \tilde q(t_2))$.
Thus, $T(\tilde p, \tilde q(t_2))$ is contained in the union of the meridians $[\theta = \theta (\tilde p)] \cup [\theta = \theta(\tilde p) + \pi]$. 
Therefore, $\theta (\tilde q(t))$ is monotone increasing in the interval $I \subset [0,d(p,q)]$ such that $\tilde q(I) \subset {\rm Int}(\widetilde M^+_{\tilde p})$.  

Suppose $t_0 >0$. We have to treat two cases; $\tilde q(t_0) \in T(\tilde p, \tilde o)$ and $\tilde p \in T(\tilde o, \tilde q(t_0))$. In the first case, we have
\begin{eqnarray*}
d(p, q(t_0))+d(q(t_0),o) &=& d(\tilde p, \tilde q(t_0))+d(\tilde q(t_0), \tilde o) \\
&=& d(\tilde p, \tilde o) \\
&=& d(p, o),
\end{eqnarray*}
meaning that $T(p,q(t_0)) \subset T(p, o)$. Therefore, $T(p,q) \subset T_e(p,o)$, and therefore $\widetilde T(\tilde p, \tilde q)$ is contained in the union of the meridian through $\tilde p$ and the meridian opposite to $\tilde p$.
In the second case, we have
\begin{eqnarray*}
d(o, p)+d(p,q(t_0)) &=& d(\tilde o, \tilde p)+d(\tilde p, \tilde q(t_0)) \\
&=& d(\tilde o, \tilde q(t_0)) \\
&=& d(o, q(t_0)),
\end{eqnarray*}
meaning that $T(p,q(t_0)) \subset T(o, q(t_0))$. Therefore, $\widetilde T(p, q)([0,t_0])$ is contained in the meridian through $\tilde p$.

It remains to prove that $\theta (\tilde q(t))=\theta (\tilde p) + \pi$ for $t > t'_0$ in case of $\theta (\tilde q(t'_0))=\theta (\tilde p) + \pi$. 
Suppose that there exists a parameter $t \in (t'_0, d(p,q))$ such that $\theta (\tilde p) < \theta (\tilde q(t)) < \theta (\tilde p)+\pi $. 
We then find a parameter $t_3 \in (0,t'_0)$ such that $\theta (\tilde q(t_3))=\theta (\tilde q(t))$ or $\tilde q(t_3)=\tilde o$ (or $\tilde q(t_3)=\tilde o_1$).
By the same argument as above, we have a contradiction. 
In particular, $\theta (q(t'_0))=\theta (\tilde q(d(p,q)))=\theta (\tilde p)+\pi$. From the argument above,  it follows that $T(\tilde p, \tilde q)$ is contained in the union of the meridians $[\theta = \theta (\tilde p)] \cup [\theta = \theta(\tilde p) + \pi]$.
We have proved (1).

In order to prove (2) we suppose that $\theta (\tilde p(t_0))=\theta (\tilde p)$ for some $t_0 \in [0, d(p,q))$ where $\tilde p(t)=\widetilde R(p,q)(t)$, $0 \le t \le d(p,q)$. Then, we have 
\begin{eqnarray*}
d(p,q) &=& t_0 + d(p, q) -t_0 \\
&=& d(\tilde q, \tilde p(t_0)) + d(\tilde p, \tilde q(d(p,q) - t_0)) \\
&\ge& d(\tilde q, \tilde p(t_0)) + d(\tilde p, \tilde p(t_0)) \\
&\ge& d(\tilde p, \tilde q) \\
&=& d(p,q),
\end{eqnarray*}
since $r(\tilde p(t_0))=r(\tilde q(d(p,q)-t_0))=d(o, q(d(p,q)-t_0))$ and $\tilde p(t_0)$ lies in the meridian through $\tilde p$ and because of Lemma \ref{circle} (1). As before, $\widetilde R([t_0, d(p,q)])$ lies on the meridian through $\tilde p$. This shows (2).

Since 
\begin{eqnarray*} 
&&(d(\tilde o, \widetilde T(p,q)(t)), d(\tilde p, \widetilde T(p,q)(t))) \\ 
&=& \widetilde F_{\tilde p}(\widetilde T(p,q)(t)) \\ 
&=& F_{p}(T(p,q)(t)) \\ 
&=& (d(o, T(p,q)(t)), d(p, T(p,q)(t))), 
\end{eqnarray*} 
we have 
\[ 
r(\widetilde T(p,q)(t))=d(o, T(p,q)(t)), \quad d(\tilde p, \widetilde 
T(p,q)(t))=t. 
\] 
Since 
\begin{eqnarray*} 
&&(d(\tilde o, \widetilde R(p,q)(t)), d(\tilde q, \widetilde R(p,q)(t))) \\ 
&=& \widetilde G_{\tilde q}(\widetilde R(p,q)(t)) \\ 
&=& F_{q}(T(p,q)(d(p,q)-t)) \\ 
&=& (d(o, T(p,q)(d(p,q)-t)), d(q, T(p,q)(d(p,q)-t))), 
\end{eqnarray*} 
we have 
\[ 
r(\widetilde R(p,q)(t))=d(o, T(p,q)(d(p,q)-t)), \quad d(\tilde q, \widetilde 
R(p,q)(t))=t. 
\] 
Thus we have (3) and 
\[ 
d(\tilde p, \widetilde T(p,q)(t))+d(\tilde q, \widetilde 
R(p,q)(d(p,q)-t))=d(p,q) 
\] 
which proves (4). Then (5) follows from 
\begin{eqnarray*} 
r(\widetilde T(p,q)(t)) &=& d(o, T(p,q)(t)) \\ 
&=& r(\widetilde R(p,q)(d(p,q)-t)). 
\end{eqnarray*}
Obviously, (6) follows from the definition of the reference curves and the reference reverse curves 
\end{proof} 

Let $q(t)=T(p,q)(t)$ and $\tilde q(t)=\widetilde T(p,q)(t)$,\; $0 \le t \le d(p,q)$.
Lemma \ref{basic} (3) shows that a geodesic triangle $\triangle \tilde o \tilde p \tilde 
q(t)$ in $\widetilde M$ is a comparison triangle corresponding to $\triangle 
opq(t)$ in $M$.
 
Let $\theta (t)=\theta ( \widetilde R(p,q)(d(p,q)-t))-\theta (\widetilde 
T(p,q)(t))$,\; $0 \le t \le d(p,q)$. The following lemma shows the difference between 
$\widetilde T(p,q)(t)$ and $\widetilde R(p,q)(t)$ in terms of $\theta (t)$. 
 
\begin{lem}\label{TandR}
The reference curves $\tilde q(t)=\widetilde T(p,q)(t)$ and $\widetilde R(p,q)(t)$ satisfy 
the following properties. 
\begin{enumerate} 
\item[(1)] $\theta (t) \ge 0$ for all $t \in [0, d(p,q)]$. Moreover, if $\theta (t) \not= 0$ at  $t \in (0,d(p,q))$, namely $\widetilde T(p,q)(t) \not= \widetilde R(p,q)(d(p,q)-t)$, then $T(\tilde p, \tilde q)$ does not cross the subarc of the parallel $[r=r(\tilde q(t))]$ in $\widetilde M^+_{\tilde p}$ joining $\widetilde T(p,q)(t)$ and $\widetilde R(p,q)(d(p,q)-t)$.
\item[(2)] $\theta (t)=0$ if and only if $\widetilde T(p,q)(t)=\widetilde 
R(p,q)(d(p,q)-t)$. Then the point is in a minimizing geodesic segment $T(\tilde 
p, \tilde q)$. 
\item[(3)] If there exists a point $\tilde q' \in \widetilde T(p,q) \cap T(\tilde p, \tilde q) \smallsetminus \{ \tilde p, \tilde q \}$ $($resp., $\widetilde R(p,q) \cap T(\tilde p, \tilde q) \smallsetminus \{ \tilde p, \tilde q \}$ $)$, then $\widetilde R(p,q)(d(p,q)-d(\tilde p, \tilde q'))=\tilde q'$ $($resp., $\widetilde T(p,q)(d(p,q)-d(\tilde p, \tilde q'))=\tilde q'$ $)$. 
\item[(4)] $\widetilde T(p,q) \ge T(\tilde p, \tilde q)$ if and only if 
$\widetilde R(p,q) \ge T(\tilde p, \tilde q)$. 
\item[(5)] $\widetilde T(p,q) \le T(\tilde p, \tilde q)$ if and only if 
$\widetilde R(p,q) \le T(\tilde p, \tilde q)$.
\end{enumerate} 
\end{lem}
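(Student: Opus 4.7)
The plan is to establish (1) as the geometric core of the lemma; (2) and (3) will then follow directly from the parallel-circle uniqueness of Lemma~\ref{circle} (2), and (4)--(5) will be deduced from (1) and (3) by a continuity argument along parallels.

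For (1), set $\tilde y := \widetilde T(p,q)(t)$ and $\tilde z := \widetilde R(p,q)(d(p,q)-t)$. Lemma~\ref{basic} (3)--(5) put both points on the same parallel $[r=r(\tilde y)]$ and give the identity $d(\tilde p, \tilde y) + d(\tilde z, \tilde q) = d(\tilde p, \tilde q)$. I first observe that $\tilde z \in \widetilde M^+_{\tilde p}$: by construction $\tilde z \in \widetilde M^-_{\tilde q}$, and since $\widetilde R(p,q)$ is $\theta$-monotone from $\tilde q$ to $\tilde p$ (by the reverse analog of Lemma~\ref{basic} (1)), it lies in $\widetilde M^-_{\tilde q}\cap\widetilde M^+_{\tilde p}=[\theta(\tilde p)\le\theta\le\theta(\tilde q)]$. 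The triangle inequality then yields $d(\tilde p, \tilde z)\ge d(\tilde p, \tilde y)$, and the strict monotonicity of $d(\tilde p, \cdot)$ on a parallel in $\widetilde M^+_{\tilde p}$ (Lemma~\ref{circle} (1)) forces $\theta(\tilde z)\ge\theta(\tilde y)$, proving $\theta(t)\ge 0$. For the non-crossing claim, assume $\theta(t)>0$ and suppose $\tilde w\in T(\tilde p, \tilde q)$ lies on the parallel subarc from $\tilde y$ to $\tilde z$. The equation $d(\tilde p, \tilde w)+d(\tilde w, \tilde q)=d(\tilde p, \tilde y)+d(\tilde z, \tilde q)$, combined with the strict monotonicities of $d(\tilde p, \cdot)$ on the arc (viewed in $\widetilde M^+_{\tilde p}$) and of $d(\tilde q, \cdot)$ on the same arc (viewed in $\widetilde M^-_{\tilde q}$), pinches $d(\tilde p, \tilde w)=d(\tilde p, \tilde y)$, whence $\tilde w=\tilde y$; but then $d(\tilde y, \tilde q)=d(\tilde z, \tilde q)$ forces $\tilde y=\tilde z$, contradicting $\theta(t)>0$.

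Part (2) is immediate: $\theta(t)=0$ combined with $r(\tilde y)=r(\tilde z)$ and both points in $\widetilde M^+_{\tilde p}$ yields $\tilde y=\tilde z$ by Lemma~\ref{circle} (2), and Lemma~\ref{circle} (3) supplies a minimizing geodesic $T(\tilde p, \tilde q)$ through the common point. For (3), given $\tilde q'=\widetilde T(p,q)(t')\in T(\tilde p, \tilde q)$, Lemma~\ref{basic} (3) gives $d(\tilde p, \tilde q')=t'$, while $\tilde q'\in T(\tilde p, \tilde q)$ gives $d(\tilde q, \tilde q')=d(p,q)-t'$. The point $\widetilde R(p,q)(d(p,q)-t')$ lies at the same distance $d(p,q)-t'$ from $\tilde q$ and has the same $r$-coordinate as $\tilde q'$ (Lemma~\ref{basic} (5)); both points belong to $\widetilde M^-_{\tilde q}$ (using $\theta$-monotonicity of the non-meridian geodesic $T(\tilde p, \tilde q)$), so Lemma~\ref{circle} (2) forces equality.

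For (4) and (5), assume $\widetilde T(p,q)\ge T(\tilde p, \tilde q)$. Fix $t$ and set $\varphi_T=\theta(\tilde y)$, $\varphi_R=\theta(\tilde z)$, $r_0=r(\tilde y)=r(\tilde z)$. The assumption gives $r(T(\tilde p, \tilde q)\cap[\theta=\varphi_T])\le r_0$. If $\theta(t)>0$, then (1) forbids $r(T(\tilde p, \tilde q)\cap[\theta=\varphi])=r_0$ anywhere on $[\varphi_T,\varphi_R]$, so by continuity of $T(\tilde p, \tilde q)$ parameterized by $\theta$ and the intermediate value theorem, this quantity stays $<r_0$ on the whole interval; in particular at $\varphi=\varphi_R$ we get $r(T(\tilde p, \tilde q)\cap[\theta=\varphi_R])\le r_0=r(\widetilde R(p,q)\cap[\theta=\varphi_R])$. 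If $\theta(t)=0$, then $\varphi_T=\varphi_R$ and the inequality is immediate. As $t$ sweeps $[0,d(p,q)]$ continuously, $\varphi_R$ sweeps the common $\theta$-range $[\theta(\tilde p),\theta(\tilde q)]$ of $\widetilde R(p,q)$ and $T(\tilde p, \tilde q)$, yielding $\widetilde R(p,q)\ge T(\tilde p, \tilde q)$. The reverse implication is symmetric, and (5) follows by replacing every $\ge$ with $\le$ in the same argument. The main obstacle is calibrating (1): both the inequality $\theta(t)\ge 0$ and the non-crossing assertion must be extracted from the single identity $d(\tilde p, \tilde y)+d(\tilde z, \tilde q)=d(\tilde p, \tilde q)$ by playing off the two-sided parallel monotonicities of $d(\tilde p, \cdot)$ and $d(\tilde q, \cdot)$ against each other; once (1) is in place, the remaining parts reduce to uniqueness on parallel circles and one-dimensional continuity.
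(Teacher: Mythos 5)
Your proposal is correct and follows essentially the same route as the paper: both rest on the identity $d(\tilde p,\widetilde T(p,q)(t))+d(\tilde q,\widetilde R(p,q)(d(p,q)-t))=d(\tilde p,\tilde q)$ from Lemma \ref{basic} (4)--(5) played against the strict monotonicity of $d(\tilde p,\cdot)$ and $d(\tilde q,\cdot)$ along parallels (Lemma \ref{circle} (1)), with (2) and (3) reduced to uniqueness on parallel circles and (4)--(5) reduced to the non-crossing statement of (1). The only differences are presentational (direct pinching arguments where the paper argues by contradiction, and an explicit intermediate-value step in (4)--(5) where the paper extracts the same crossing point implicitly).
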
 
 
\begin{proof} 
It follows that $\theta (\tilde p) < \theta (\widetilde T(p,q)(t))$ and $\theta 
(\tilde p) < \theta (\widetilde R(p,q)(d(p,q)-t))$,\; $0 < t < d(p,q)$. 
Suppose the first part of (1) is false. Then Lemma \ref{basic} (5) and Lemma \ref{circle} (1) show that 
\[ 
d(\tilde p, \widetilde T(p,q)(t)) > d(\tilde p, \widetilde R(p,q)(d(p,q)-t)) 
\] 
for some $t$. This contradicts Lemma \ref{basic} (4), since 
\begin{eqnarray*} 
d(\tilde p,\tilde q)&=&d(\tilde p, \widetilde T(p,q)(t))+d(\tilde q, \widetilde 
R(p,q)(d(p,q)-t) \\ 
&>& d(\tilde p, \widetilde R(p,q)(d(p,q)-t))+d(\tilde q, \widetilde 
R(p,q)(d(p,q)-t) \\ 
&\ge& d(\tilde p, \tilde q). 
\end{eqnarray*}

We prove the second part of (1). Suppose that there exists a point $T(\tilde p, \tilde q)(t_0)$ lying on the parallel circle joining $\tilde q(t)$ and $\widetilde R(p,q)(d(p,q)-t)$ for some $t \in [0, d(p,q)]$. 
We then have $r(T(\tilde p, \tilde q)(t_0))=r(\tilde q(t))$. 
Since $\theta (\widetilde T(p, q)(t))$ is monotone increasing in $t \in [0, d(p,q)]$, we have $\theta (\tilde p) \le \theta (\tilde q(t))$ and $\theta (\widetilde R(p,q)(d(p,q)-t)) \le \theta (\tilde q)$. 
Since $\theta (\tilde q(t)) < \theta (T(\tilde p, \tilde q)(t_0) < \theta (\widetilde R(p,q)(d(p,q)-t))$, it follows from Lemma \ref{circle} (1) that $t < t_0$ and $d(\tilde p, \tilde q) - t_0 > d(p, q)-t$. 
Hence, we have $d(\tilde p, \tilde q) > d(p,q)$, a contradiction.

If $\theta (t)=0$, then the equality holds in the above inequalities, and hence 
 $\widetilde T(p,q)(t)=\widetilde R(p,q)(d(p,q)-t)$. The converse is 
trivial. The second part of (2) follows from Lemma \ref{basic} (4).

We prove (3). Let $q' \in T(p,q)$ correspond to $\tilde q'$, namely, $d(p,q')=d(\tilde p, \tilde q')$. Recall that $\tilde p':=\widetilde R(p,q)(d(p,q)-d(p, q'))$ is the point in $\widetilde M^-_{\tilde q}$ such that $d(\tilde o, \tilde p')=d(o, q')$ and $d(\tilde q, \tilde p')=d(q, q')$. Since $d(q,q')=d(p,q)-d(p,q')=d(\tilde p, \tilde q)-d(\tilde p, \tilde q')=d(\tilde q, \tilde q')$ and $d(\tilde o, \tilde q')=d(o, q')$, we have $\tilde p'=\tilde q'$. In the same way we can prove the other case.
 
For the proof of (4) and (5), we suppose for indirect proof that $\widetilde 
T(p,q) \ge T(\tilde p, \tilde q)$ and $\widetilde R(p,q)(s) < T(\tilde p, \tilde 
q)$ for some $s \in (0, d(p,q))$. Then, there exists a point $\tilde z \in 
T(\tilde p, \tilde q)$ such that $\theta (\widetilde T(p,q)(d(p,q)-s)) \le 
\theta (\tilde z) < \theta (\widetilde R(p,q)(s))$, contradicting the second part of (1).
The remainder cases are proved in the same way. 
\end{proof} 
 
\begin{lem}\label{baseangle}
Let $q(t)=T(p,q)(t)$ be a minimizing geodesic segment in 
$M$. Assume that $\widetilde T(p, q(t)) \ge T(\tilde p, \tilde q(t))$ for any $t \in [0, d(p,q)]$.
Set $\tilde p(t)=\widetilde R(p,q)(t)$.
Then, we have 
\begin{enumerate} 
\item[(1)] $T(\tilde p, \tilde q(t)) \ge T(\tilde p, \tilde q(s))$ and $T(\tilde p(t), \tilde q) \ge T(\tilde p(s), \tilde q)$ for any $t < s$. 
\item[(2)] $r(\tilde q(t)) \ge r(T(\tilde p, \tilde q)(t))$ and $r(\tilde p(t)) \ge r(T(\tilde p, \tilde q)(d(p,q)-t))$, $0 \le t \le d(p,q)$. 
\item[(3)] $\angle opq \ge \angle \tilde o \tilde p \tilde q$ and $\angle oqp \ge \angle \tilde o \tilde q \tilde p$. 
\end{enumerate} 
\end{lem}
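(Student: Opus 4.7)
The plan is to establish (1), (2), (3) in sequence, each resting on the preceding one. The main tools are Lemma~\ref{basic}(6) on nesting of reference curves, the fact that two distinct minimizing geodesics from a common point cannot meet at any interior point, the monotonicity of $\bar g$ introduced right after Lemma~\ref{circle}, and the first variation formula for distance.

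For (1), fix $t<s$. Lemma~\ref{basic}(6) gives $\widetilde T(p,q(t))\subset \widetilde T(p,q(s))$, so $\tilde q(t)\in\widetilde T(p,q(s))$. The hypothesis $\widetilde T(p,q(s))\ge T(\tilde p,\tilde q(s))$ then yields
\[
r(\tilde q(t))\ge r\bigl(T(\tilde p,\tilde q(s))\cap[\theta=\theta(\tilde q(t))]\bigr),
\]
which, since $\tilde q(t)$ is the endpoint of $T(\tilde p,\tilde q(t))$, is precisely the inequality $T(\tilde p,\tilde q(t))\ge T(\tilde p,\tilde q(s))$ at the single angle $\theta(\tilde q(t))$; both geodesics also agree at $\theta(\tilde p)$. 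Two distinct minimizing geodesics issuing from a common point cannot cross at an interior point, for otherwise the broken concatenation at the crossing would be minimizing, hence smooth, forcing the two to coincide past the crossing. Hence the inequality holds throughout $[\theta(\tilde p),\theta(\tilde q(t))]$. The statement for $\tilde p(t)$ is obtained by applying the same argument with $p,\tilde p$ replaced by $q,\tilde q$ and $\widetilde T$ replaced by $\widetilde R$.

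For (2), specialize (1) to $s=d(p,q)$ to obtain $T(\tilde p,\tilde q(t))\ge T(\tilde p,\tilde q)$. Both are unit speed minimizing geodesics from $\tilde p$, so the points $\tilde q(t)$ and $T(\tilde p,\tilde q)(t)$ both lie on $S(\tilde p,t)^+$. The positional order $\ge$ near $\tilde p$ forces the initial angle $\omega$ of $T(\tilde p,\tilde q(t))$ (measured from $T(\tilde p,\tilde o)\,\dot{}(0)$) to be at least that of $T(\tilde p,\tilde q)$. Because $\bar g$ is monotone nondecreasing in each connected component, the $r$-coordinate of the endpoint on $S(\tilde p,t)^+$ is nondecreasing in $\omega$, so $r(\tilde q(t))\ge r(T(\tilde p,\tilde q)(t))$. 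The bound for $\tilde p(t)$ follows analogously from the reverse reference curve.

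For (3), the function
\[
\psi(t)=d(o,q(t))-d(\tilde o,T(\tilde p,\tilde q)(t))=r(\tilde q(t))-r(T(\tilde p,\tilde q)(t))
\]
is nonnegative on $[0,d(p,q)]$ by (2) and vanishes at $t=0$, so its right derivative at $0$ is nonnegative. The first variation formula evaluates this derivative as $-\cos\angle opq+\cos\angle\tilde o\tilde p\tilde q$, yielding $\angle opq\ge\angle\tilde o\tilde p\tilde q$. The symmetric argument using $\widetilde R(p,q)$ and $\tilde p(t)$ in place of $\widetilde T(p,q)$ and $\tilde q(t)$ gives $\angle oqp\ge\angle\tilde o\tilde q\tilde p$. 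The most delicate step is the translation in (2) from the $\ge$ relation between the two geodesics to the ordering of their initial angles at $\tilde p$: one must verify that both initial angles lie in the same connected component of the domain of $\bar g$ so that its monotonicity can be applied, which requires combining the local behavior of the two geodesics near $\tilde p$ with the non-crossing property used in (1).
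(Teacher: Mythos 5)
Your proof is correct and follows essentially the same route as the paper: part (1) from Lemma~\ref{basic}(6) together with the fact that two distinct minimizing geodesics from $\tilde p$ meet only at $\tilde p$, part (2) by converting the ordering of the geodesics $T(\tilde p,\tilde q(t))\ge T(\tilde p,\tilde q)$ into an ordering of $r$-coordinates of their points on $S(\tilde p\,;t)$, and part (3) by a first-order comparison at $\tilde p$ using (2). The only differences are cosmetic: in (1) the paper excludes $T(\tilde p,\tilde q(s))$ from the region bounded by $\widetilde T(p,q(t))\cup T(\tilde p,\tilde q(t))$ rather than arguing from the endpoint inequality at $\theta(\tilde q(t))$; in (2) it sweeps the continuous family $c(s)=T(\tilde p,\tilde q(s))(t)$ instead of invoking $\bar g$ directly (so the connected-component issue you flag appears there, in equivalent form, as the unproved assertion that $r(c(s))$ is monotone nonincreasing); and in (3) it uses the Euclidean comparison-angle limit for $\cos\angle opq$ in place of the right derivative of $\psi$, which is equivalent to your first-variation argument.
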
 
 
\begin{proof} 
We notice that $\widetilde T(p, q(t))\cup T(\tilde p, \tilde q(t))$ bounds a 
figure $\Omega$ in $\widetilde M$. We see that $T(\tilde p, \tilde q(s))$ cannot 
pass through any interior point of $\Omega$. In fact, if $T(\tilde p, \tilde q(s))$ contains an interior point in $\Omega$, then $T(\tilde p, \tilde q(s))$ meets $\widetilde T(p, q(t))$ at $\widetilde T(p, q(t))(t_0)$ for some $t_0 \in (0,t]$, because $T(\tilde p, \tilde q(t))\cap T(\tilde p, \tilde q(s)) = \{ \tilde p \}$ and $\tilde q(s) \not\in \Omega$. Since $\widetilde T(p, q(t))$ is a subarc of $\widetilde T(p, q(s))$, we have $\widetilde T(p, q(t)) \ge \widetilde T(p, q(s))$ and, hence, $\widetilde T(p, q(t)) \ge T(\tilde p, \tilde q(s))$. This means that the last parameter $t_0$ where $T(\tilde p, \tilde q(s))$ meets $\widetilde T(p, q(t))$ must be $t$. This contradicts  $T(\tilde p, \tilde q(t))\cap T(\tilde p, \tilde q(s)) = \{ \tilde p \}$ again. 
Since $\widetilde T(p,q(t)) \ge T(\tilde p, 
\tilde q(s))$, we conclude that $T(\tilde p, \tilde q(t)) \ge T(\tilde p, \tilde 
q(s))$. By Lemma \ref{TandR} (4), we have the same inequality for the reference reverse curves. This completes the proof of (1). 
 
Given $t$, $0 \le t \le d(p,q)$, we set $c(s)=T(\tilde p, \tilde q(s))(t)=S(\tilde p, t)\cap T(\tilde 
p, \tilde q(s))$ for any $s \in (t, d(p,q)]$. Then, (1) implies that $r(c(s))$ 
is monotone nonincreasing for $s > t$. We then have 
\[ 
r(\tilde q(t)) \ge r(c(s)) \ge r(c(d(p,q))) = r(T(\tilde p, \tilde q)(t)). 
\] 
In the same way, we have $r(\tilde p(t)) \ge r(T(\tilde p, \tilde q)(d(p,q)-t))$ for any $t$. This completes the proof of (2). 

In order to prove (3) we recall that 
\[ 
\cos \angle opq =\lim_{t \to +0}\frac{d(p,q(t))^2+d(o,p)^2 
-d(o,q(t))^2}{2d(p,q(t))d(o,p)}. 
\] 
Therefore, we have from (2) 
\begin{eqnarray*} 
\cos \angle opq &=& \lim_{t \to +0}\frac{t^2+r(\tilde p)^2 -r(\tilde 
q(t))^2}{2tr(\tilde p)} \\ 
&\le& \lim_{t \to +0}\frac{t^2+r(\tilde p)^2 -r(T(\tilde p,\tilde 
q)(t))^2}{2tr(\tilde p)} =\cos \angle \tilde o \tilde p \tilde q. 
\end{eqnarray*} 
Using the reference reverse curve $\widetilde R(p,q)$, we have $\angle oqp \ge \angle \tilde o \tilde q \tilde p$ in the same way.
This completes the proof of (3). 
\end{proof} 
 
\section{Reference curves meeting no cut point}
In this section, we assume that a complete pointed Riemannian manifold $(M,o)$ is referred to a surface of revolution $(\widetilde M, \tilde o)$. 
When $\ell < \infty$, it has been proved in \cite{IMS-0} that $M$ is isometric to the warped product manifold whose warping function is the radial curvature function of $\widetilde M$ if there exists a point $p \in M$ such that $d(o,p)=\ell$. 
Hence, there is nothing to study for the comparison theorems on those manifolds anymore. Therefore, we may assume that $d(o, p) < \ell$ for all points $p \in M$.
We study the global positional relation between reference curves $\widetilde T(p,q)$ and minimizing geodesic segments $T(\tilde p, \tilde q)$. We start from the following lemma, showing the local relation, which is proved in \cite{IMS-0}, \cite{IMS} and \cite{KT2}.
 
\begin{lem}\label{fundamental}
Let $p$ be a point in $M$ such that $p \not= o$.
There exists an $r_p > d(o,p)$ such that any 
geodesic triangle $\triangle opq$ in $M$ with $d(o,q) + d(p,q) < r_p$ has a 
comparison triangle $\triangle \tilde o \tilde p \tilde q$ in $\widetilde M$ 
satisfying $(2.2)$ and $(2.3)$. Moreover, if 
one of the equalities holds in $(2.2)$ and $(2.3)$, then $\triangle opq$ bounds a 
totally geodesic 2-dimensional submanifold in $M$ which is isometric to a 
comparison triangle domain $\triangle \tilde o \tilde p \tilde q$ corresponding to 
$\triangle opq$. 
\end{lem}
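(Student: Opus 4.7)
The plan is a standard local Jacobi field (Rauch) comparison, carried out in a neighborhood of $p$ on which all minimizing segments to $o$ and to $\tilde o$ are unique and depend smoothly on the endpoint.

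First I would choose $r_p>d(o,p)$ so small that, for every $q$ with $d(o,q)+d(p,q)<r_p$, the following hold:
(i) $d(p,q)<\min\{\mathrm{inj}_M(p),\mathrm{inj}_{\widetilde M}(\tilde p)\}$, so $T(p,q)$ and $T(\tilde p,\tilde q)$ are unique;
(ii) the closure of $\{x\in M\,|\,d(o,x)+d(p,x)<r_p\}$ avoids $Cut(o)$, using that $p\notin Cut(o)$ and that $M\smallsetminus Cut(o)$ is open, so the radial segments $T(o,x)$ depend smoothly on $x$ throughout this region;
(iii) the corresponding region of $\widetilde M$ lies in ${\rm Int}(\widetilde M^+_{\tilde p})$ and is disjoint from $Cut(\tilde p)$, so that the reference map $\Phi$ is smooth there and $U(\tilde p,\tilde q)=L(\tilde p,\tilde q)=T(\tilde p,\tilde q)$ is the unique minimizer.

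Fixing such a $q$, set $\gamma(s)=T(p,q)(s)$ and consider the smooth variation of radial geodesics $\sigma(s,t)=T(o,\gamma(s))(t)$, $s\in[0,d(p,q)]$, $t\in[0,\rho(s)]$, with $\rho(s)=d(o,\gamma(s))$. The transverse field $Y_s(t)=\partial_s\sigma(s,t)$ is a Jacobi field along the radial geodesic $t\mapsto\sigma(s,t)$ vanishing at $t=0$, and the plane spanned by the radial direction and $Y_s$ is a radial plane, so $K_M\ge K(t)$ there by hypothesis. The parallel construction $\tilde\sigma(s,t)=T(\tilde o,T(\tilde p,\tilde q)(s))(t)$ in $\widetilde M$ produces a Jacobi field $\tilde Y_s$ on a surface of curvature exactly $K(t)$. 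Rauch's theorem then yields the pointwise bound $|Y_s(t)|\le|\tilde Y_s(t)|$, which translates, via the Hessian comparison for the distance function $d(o,\cdot)$, into the inequality
\[
d(o,\gamma(s))\ge d\bigl(\tilde o,T(\tilde p,\tilde q)(s)\bigr),\qquad 0\le s\le d(p,q).
\]
By Lemma \ref{basic} (3) this is precisely $\widetilde T(p,q)\ge T(\tilde p,\tilde q)$; exchanging the roles of $p$ and $q$ yields $\widetilde R(p,q)\ge T(\tilde p,\tilde q)$, establishing (2.2). Lemma \ref{baseangle} (3) then immediately gives the angle inequalities at $p$ and $q$ in (2.3), while the inequality at $o$ follows from the hinge form of Rauch's theorem applied to the radial geodesics issuing from $o$ at $s=0$.

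The hard part is the rigidity statement. If equality holds at an interior point of (2.2), the equality case of Rauch's theorem forces $|Y_s(t)|=|\tilde Y_s(t)|$ along the entire radial geodesic to that point; varying $s$ and using smoothness of $\sigma$ then forces $K_M(\Pi)\equiv K$ on every radial plane swept out by the variation, so that $(s,t)\mapsto\sigma(s,t)$ is an isometric embedding of the reference triangle domain $\triangle\tilde o\tilde p\tilde q\subset\widetilde M$ onto a totally geodesic 2-submanifold of $M$. Equality in one of the angle inequalities of (2.3) propagates, by reversing the estimate in Lemma \ref{baseangle}, to equality at an interior point of (2.2), reducing to the previous case. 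The delicate step is to verify that the rigidity produces a full two-dimensional totally geodesic image rather than equality only along a single radial geodesic; this uses smoothness of the variation together with the absence of conjugate points to $\tilde o$ in $\triangle\tilde o\tilde p\tilde q$, which is guaranteed by the choice of $r_p$.
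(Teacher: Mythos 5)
Your overall strategy---prove the edge comparison $d(o,T(p,q)(s))\ge d(\tilde o,T(\tilde p,\tilde q)(s))$ by a Rauch/Hessian argument along the radial geodesics and then read off the angles---is essentially how the references \cite{IMS-0}, \cite{IMS}, \cite{KT2} prove this; the paper itself does not reprove it, but merely observes that those proofs apply verbatim once $r_p$ is chosen so that $B(\tilde o,\tilde p\,;r_p)$ lies inside a cut-point-free neighborhood $D(\delta)$ of $T(\tilde o,\tilde p)$. The genuine problem with your write-up is that your choice of $r_p$ cannot be made. The set $\{x\in M\,|\,d(o,x)+d(p,x)<r_p\}$ is not a small ball about $p$: for every $r_p>d(o,p)$ it is a lens containing $o$, the point $p$, and every minimizing segment from $o$ to $p$. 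Consequently it contains points $q$ with $d(p,q)$ arbitrarily close to $d(o,p)$, so your condition (i) fails whenever $d(o,p)\ge\mathrm{inj}_M(p)$ (and similarly in $\widetilde M$); and condition (ii) fails whenever $p\in Cut(o)$, a case the lemma does not exclude---indeed the lemma is later applied with the role of $p$ played by arbitrary points $q(t)$ on geodesics (see the proof of Corollary \ref{equalityCase}), which may well be cut points of $o$. Condition (i) is merely superfluous (the statement concerns a fixed choice of $T(p,q)$, and uniqueness in $\widetilde M$ is what (iii) supplies), but (ii) is essential to your argument: if some $\gamma(s)$ lies in $Cut(o)$, then $\sigma(s,t)=T(o,\gamma(s))(t)$ is not a smooth variation, $Y_s$ is not the variation field of one, and $\mathrm{Hess}\,d(o,\cdot)$ need not exist at $\gamma(s)$. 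The missing ingredient is Calabi's support-function trick, which gives the inequality $\rho''\le (f'(\rho)/f(\rho))(1-(\rho')^2)$ for $\rho(s)=d(o,\gamma(s))$ in the barrier sense at cut points of $o$; without it the lemma is only proved for $p$ and $T(p,q)$ avoiding $Cut(o)$.

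Two smaller points. The passage from $|Y_s(t)|\le|\tilde Y_s(t)|$ to $d(o,\gamma(s))\ge d(\tilde o,T(\tilde p,\tilde q)(s))$ is not a one-line consequence of ``the Hessian comparison'': what one actually obtains is the differential inequality above for $\rho$, with equality for the model function $\tilde\rho$, and deducing $\rho\ge\tilde\rho$ from matching two-point boundary data still requires a Sturm-type argument that you should supply. Likewise the rigidity clause is only sketched; in particular, equality in the angle at $o$ has to be traced back to equality in the edge comparison, which in this paper is done through the monotonicity of $\theta(\tilde q_1(t))$ in the proof of Assertion \ref{Ass1} rather than through an off-the-shelf ``hinge form of Rauch.''
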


\begin{proof}
As was seen in Lemma \ref{ellipse} (2), the set of all ellipses $E(\tilde o, \tilde p \, ; a)$, $a > d(\tilde o, \tilde p)$, gives a foliation of $\widetilde M \setminus T(\tilde o, \tilde p)$. 
Namely, for any point $\tilde q \in \widetilde M \smallsetminus T(\tilde o, \tilde p)$ there exists the unique ellipse $E(\tilde o, \tilde p \, ; a)$ passing through $\tilde q $.
When $r(\tilde p) < \ell$, there exists a positive $\delta$ such that the $\delta$-neighborhood $D(\delta)$ of $T(\tilde o, \tilde p)$ does not contain any cut point of $\tilde p$. 
Observe that the proof of the comparison theorems in \cite{IMS-0}, \cite{IMS} and \cite{KT2} is valid if the domain is free from $Cut(\tilde p)$. Hence, if we set $r_p=\max \{ a \, | \, E(\tilde o, \tilde p \, ; a) \subset D(\delta) \}$, then it satisfies this lemma.
\end{proof}

It follows from (2.2) and the third inequality of (2.3) that the reference curves and the comparison triangle $\triangle \tilde o \tilde p \tilde q$ actually lie in $\widetilde M^+_{\tilde p}$ for all points $q$ with $d(o,q) + d(p,q) < r_p$.

\begin{cor}\label{equalityCase}
Let $p$ and $q$ be points in $M$ other than $o$. 
Assume that a minimizing geodesic segment $T(p,q)$ is contained in $F_p{}^{-1}( \widetilde F_{\tilde p}(\widetilde M_{\tilde p}^+))$. 
If $\widetilde T(p,q)=T(\tilde p, \tilde q)$ as a set, then $\triangle opq$ bounds a totally geodesic 2-dimensional submanifold in $M$ which is isometric to a comparison triangle domain $\triangle \tilde o \tilde p \tilde q$ corresponding to $\triangle opq$.
\end{cor}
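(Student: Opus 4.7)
The plan is to combine the local rigidity statement inside Lemma \ref{fundamental} with a connectedness argument along the base $T(p,q)$. Write $q(t)=T(p,q)(t)$ and $\tilde q(t)=T(\tilde p,\tilde q)(t)$ for $0\le t\le d(p,q)$. The hypothesis $\widetilde T(p,q)=T(\tilde p,\tilde q)$ as point sets, together with Lemma \ref{basic}(3), forces $\widetilde T(p,q)(t)=\tilde q(t)$, and hence $d(p,q(t))=d(\tilde p,\tilde q(t))=t$ and $d(o,q(t))=d(\tilde o,\tilde q(t))$ for every $t$. Consequently every sub-triangle $\triangle o\,q(s_1)\,q(s_2)$ has $\triangle\tilde o\,\tilde q(s_1)\,\tilde q(s_2)$ (whose base is the corresponding sub-arc of $T(\tilde p,\tilde q)$) as a comparison triangle, and the distance equality in (2.2) holds at every point of that base.

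Let $A$ denote the set of $s\in[0,d(p,q)]$ for which $\triangle o p q(s)$ bounds a totally geodesic $2$-submanifold $\Sigma_s\subset M$ isometric to $\triangle\tilde o\tilde p\tilde q(s)$ by an isometry carrying each side to the corresponding side. Lemma \ref{fundamental} applied near $p$ gives $A\supset[0,\varepsilon)$, so $s^*:=\sup A>0$. For a sequence $s_n\in A$ with $s_n\nearrow s^*$ the submanifolds are nested, $\Sigma_{s_n}\subset\Sigma_{s_m}$ for $n<m$, because the totally geodesic $\Sigma_{s_m}$ contains the minimizing geodesic $T(o,q(s_n))$ and hence the full sub-triangle $\triangle o p q(s_n)$; the closure of $\bigcup_n\Sigma_{s_n}$ is then a totally geodesic submanifold bounded by $\triangle opq(s^*)$ and isometric to the comparison, showing $s^*\in A$. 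Suppose for contradiction that $s^*<d(p,q)$. Choose $\delta>0$ with $d(o,q(s^*+\delta))+\delta<r_{q(s^*)}$, where $r_{q(s^*)}$ is the radius provided by Lemma \ref{fundamental} at the base point $q(s^*)$. Then the sub-triangle $\triangle o\,q(s^*)\,q(s^*+\delta)$ is small and equality in (2.2) holds on its base, so the rigidity part of Lemma \ref{fundamental} yields a totally geodesic $2$-submanifold $\Sigma'$ isometric to $\triangle\tilde o\,\tilde q(s^*)\,\tilde q(s^*+\delta)$.

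The main obstacle is the gluing of $\Sigma_{s^*}$ and $\Sigma'$ along their common edge $T(o,q(s^*))$ into a single totally geodesic $2$-submanifold, rather than a two-page book. The decisive observation is that at the shared vertex $q(s^*)$ the two tangent planes coincide: each is spanned by $\dot T(q(s^*),o)$ together with the tangent line to $T(p,q)$ at $q(s^*)$. The latter is well-defined because $T(p,q(s^*))\cup T(q(s^*),q(s^*+\delta))=T(p,q(s^*+\delta))$ is a single minimizing geodesic, so $-\dot T(q(s^*),p)=\dot T(q(s^*),q(s^*+\delta))$. Hence $\Sigma_{s^*}$ and $\Sigma'$ both lie locally inside the single totally geodesic surface $\exp_{q(s^*)}(P)$, with $P$ this common plane, and together they tile a neighbourhood of $T(o,q(s^*))$ inside it. Since the two isometries agree on $T(o,q(s^*))$ (both parametrize it by arc length from $o$), they combine into an isometry onto $\triangle\tilde o\tilde p\tilde q(s^*+\delta)$, placing $s^*+\delta\in A$ and contradicting the supremum. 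Therefore $A=[0,d(p,q)]$, and the case $s=d(p,q)$ is exactly the required statement.
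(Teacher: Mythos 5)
Your proposal is correct and follows essentially the same route as the paper: reduce to the observation that $\widetilde T(p,q)(t)=T(\tilde p,\tilde q)(t)$ forces every sub-triangle $\triangle o\,q(s_1)\,q(s_2)$ to realize equality in (2.2)/(2.3), apply the rigidity clause of Lemma \ref{fundamental} to short sub-triangles along the base, and glue the resulting totally geodesic pieces along the edges $T(o,q(\cdot))$. Your open--closed formulation and the explicit matching of tangent planes at $q(s^*)$ merely spell out the gluing step that the paper's proof leaves implicit.
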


\begin{proof}
As before, set $q(t)=T(p,q)(t)$ and $\tilde q(t)=\widetilde T(p,q)(t)$, $0 \le t \le d(p,q)$. 
Since $d(\tilde p, \tilde q(t))=t$, $0 \le t \le d(p,q)$, and $\widetilde T(p,q)=T(\tilde p, \tilde q)$, we have $\tilde q(t)=T(\tilde p, \tilde q)(t)$ for all t. 
In fact, if $\tilde q(t) \not= T(\tilde p, \tilde q)(t)$ for some $t \in (0,d(p,q))$, then there exists $t_0$ such that $t_0 \not= t$ and $\tilde q(t)=T(\tilde p, \tilde q)(t_0)$. 
We then have $t=d(p,q(t))=d(\tilde p, \tilde q(t))=d(\tilde p, T(\tilde p, \tilde q)(t_0))=t_0$, a contradiction. Hence, if $0 \le t < s \le d(p,q)$, we then have $\widetilde T(q(t), q(s))(s-t)=T(\tilde q(t), \tilde q(s))(s-t)$, since $d(\tilde q(t), \tilde q(s)) = s-t=d(q(t), q(s))$, $r(q(t))=r(\tilde q(t))$ and $r(q(s))=r(\tilde q(s))$. 
Hence, we have $\angle oq(t)q(s) = \angle \tilde o \tilde q(t) \tilde q(s)$. 
It follows from Lemma \ref{fundamental} that there exists a $\delta > 0$ such that if $| s - t | < \delta $, then $\triangle oq(t)q(s)$ bounds a totally geodesic 2-dimensional submanifold in $M$ which is isometric to a comparison triangle domain $\triangle \tilde o \tilde q(t) \tilde q(s)$ corresponding to $\triangle oq(t)q(s)$. 
This shows that there exist a totally geodesic 2-dimensional submanifold $\triangle$ bounded by $\triangle opq$ and  an isometry from $\triangle$ onto the domain bounded by $\triangle \tilde o \tilde p \tilde q$ in $\widetilde M$.
\end{proof}

\begin{rem}\label{curvature}{\rm 
Our proof technique to be employed in the theorems makes it complicated to treat the case where $\widetilde T(p,q)=T(\tilde p, \tilde q)$.
In order to avoid the case, we employ the same ideas developed in Chapter 2 in \cite{CE}.

Let $K(r)$, $r \in [0, \ell)$, denote the Gauss curvature of $\widetilde M$ on the parallel $r$-circle. 
For a sufficiently small $\delta >0$ we 
consider a differential equation 
\[ 
f''(r)+(K(r)-\delta)f(r)=0. 
\] 
We denote by  $f_{\delta}(r)$ its solution with $f_{\delta}(0)=0$ and 
$f_{\delta}{}'(0)=1$. Then, $f_{\delta}(r) > f(r)$ for any $r \in (0, \ell)$. 
By defining a metric to be 
\[ 
ds^2 = dr^2 + f_{\delta}(r)^2d\theta^2, 
\] 
we have a surface of revolution $\widetilde M_{\delta}$ such that $M$ is 
referred to $\widetilde M_{\delta}$. When $\ell < \infty$, the coefficient $K(r) - \delta$ 
and the solution $f_{\delta}(r)$ are extended on an interval $[0, \ell']$ 
containing $[0, \ell]$ properly and we do not assume that $f_{\delta}(\ell')=0$ 
and $f_{\delta}{}'(\ell')=-1$. To avoid the confusing case where some equality holds in (2.2) or (2.3), we employ $\widetilde M_{\delta}$ instead of $\widetilde M$. We prove our resullts by thinking of $\widetilde M_{\delta}$ as the reference surface, 
and then conclude the proof by letting $\delta\to 0$. More precisely, we choose 
$\delta=\delta (R)$ for each $R$ with $0 < (R + d(o,p))/2 < \ell$ such that (2.1) holds in the insides of $E(o,p \, ; R)$
 and $E(\tilde o, \tilde p \, ; R)$ and such that $\delta (R)$ converges to $0$ as 
 $(R+d(o,p))/2 \rightarrow \ell$. We prove (2.2) and (2.3) in the interior of $B(o,p \, ; R)$ and 
$B(\tilde o, \tilde p \, ; R)$, and then take $(R+d(o,p))/2$ to $\ell$. 
The 
most important fact is that $\widetilde T(p,q)=T(\tilde p, \tilde q)$ does not 
occur in $\widetilde M_{\delta}$ for any points $q \not= p$ in $E(o,p \, ; R)$. 
This property simplifies our discussion.
}\end{rem}

The following lemma is proved in \cite{IMS}. The proof here is 
different from theirs. Moreover, the method in the proof will be used when we 
prove Theorems in \S 7. \par
 
\begin{lem}\label{comparison}
Assume that a point $q \in M$ admits a minimizing geodesic segment $T(p,q)$ contained in $F_p{}^{-1}( \widetilde F_{\tilde p}(\widetilde M_{\tilde p}^+))$. 
If there exists a point $q_1 \in T(p,q)$ such that $\widetilde T(p,q') \ge T(\tilde p, \tilde q')$ for all $q' \in T(p,q_1)$, $\tilde q_1 \not\in Cut (\tilde p)$ and $((\widetilde T(p,q) \smallsetminus \{ \tilde q \}) \smallsetminus \widetilde T(p, q_1))\cap Cut(\tilde p)=\emptyset $, 
then there exists a minimizing geodesic segment $T(\tilde p, \tilde q)$ such that $\widetilde T(p,q) \ge T(\tilde p, \tilde q)$. In addition, if $\widetilde T(p,q)\cap T(\tilde p, \tilde q)$ contains a point $\tilde q'$ other than $\tilde p$ and $\tilde q$, then $\triangle opq$ bounds a 
totally geodesic 2-dimensional submanifold in $M$ which is isometric to a 
comparison triangle domain $\triangle \tilde o \tilde p \tilde q$ corresponding to 
$\triangle opq$ in $\widetilde M$.
\end{lem}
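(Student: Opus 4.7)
I would extend the hypothesized inequality along $T(p,q)$ by a connectedness argument in the arc-length parameter. Parametrize $q(t)=T(p,q)(t)$, $\tilde q(t)=\widetilde T(p,q)(t)$, set $t_1 = d(p,q_1)$, and let
\[
A := \bigl\{\, s \in [t_1, d(p,q)) : \exists\; T(\tilde p, \tilde q(s)) \text{ with } \widetilde T(p,q(t)) \ge T(\tilde p, \tilde q(t)) \text{ for all } t \in [0,s] \,\bigr\},
\]
where, consistently with Lemma \ref{basic} (6), for $t<s$ the segment $T(\tilde p, \tilde q(t))$ is interpreted as the subsegment of $T(\tilde p, \tilde q(s))$ of length $t$. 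By hypothesis $t_1 \in A$. The plan is to show $A = [t_1, d(p,q))$, pass to the limit $s \nearrow d(p,q)$, and then treat the rigidity addendum.

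\textbf{Continuity.} For closedness, if $s_n \in A$ with $s_n \to s_\infty < d(p,q)$, the cut-locus hypothesis ensures $\tilde q(s_\infty) \notin Cut(\tilde p)$, so the segments $T(\tilde p, \tilde q(s_n))$ converge uniquely to $T(\tilde p, \tilde q(s_\infty))$, and the $\theta$-wise inequality passes to the limit, giving $s_\infty \in A$. For openness (the main step), suppose $s_0 := \sup A < d(p,q)$. Since $\tilde q(s)$ avoids $Cut(\tilde p)$ near $s_0$, the segment $T(\tilde p, \tilde q(s))$ is unique and varies smoothly in $s$. The inequality on $[0,s_0]$ gives, via Lemma \ref{baseangle} (3), the angle estimate $\angle o q(s_0) p \ge \angle \tilde o \tilde q(s_0) \tilde p$. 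Applying Lemma \ref{fundamental} with $q(s_0)$ in the role of $p$ yields $r_{q(s_0)} > d(o,q(s_0))$ such that every triangle $\triangle o q(s_0) q(s_0+\varepsilon)$ for small $\varepsilon$ admits a comparison triangle satisfying (2.2) and (2.3); combined with the supplementary relation $\angle o q(s_0) q(s_0+\varepsilon) = \pi - \angle o q(s_0) p$, this controls the direction in which $\widetilde T(p,q)$ leaves $\tilde q(s_0)$, placing its tangent on or above the forward extension of $T(\tilde p, \tilde q(s_0))$. Together with the smooth dependence of $T(\tilde p, \tilde q(s))$ on $s$, this yields $\widetilde T(p, q(s_0+\varepsilon)) \ge T(\tilde p, \tilde q(s_0+\varepsilon))$ for $\varepsilon$ small, contradicting $s_0 = \sup A$.

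\textbf{Limit and rigidity.} Taking $s_n \nearrow d(p,q)$ in $A$, a subsequence of $T(\tilde p, \tilde q(s_n))$ converges to a minimizing segment $T(\tilde p, \tilde q)$ (possibly non-unique, since $\tilde q$ may lie in $Cut(\tilde p)$), and the inequality passes to the limit, giving $\widetilde T(p,q) \ge T(\tilde p, \tilde q)$. If moreover some $\tilde q' \in \widetilde T(p,q) \cap T(\tilde p, \tilde q) \setminus \{\tilde p, \tilde q\}$ exists, then at its parameter $t_0$ one has $\theta(t_0) = 0$ by Lemma \ref{TandR} (2)--(3), so $\widetilde R(p,q)$ also passes through $\tilde q'$; combined with $\widetilde T(p,q) \ge T(\tilde p, \tilde q)$ and Lemma \ref{TandR} (4)--(5), this forces $\widetilde T(p,q) = T(\tilde p, \tilde q)$ as subsets, and Corollary \ref{equalityCase} then supplies the totally geodesic $2$-submanifold isometric to a comparison triangle domain. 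The principal obstacle is the openness step: translating the angle comparison at $\tilde q(s_0)$ into a genuine $\theta$-wise inequality slightly past $s_0$ via the local Lemma \ref{fundamental}, and ruling out premature interior tangencies between $\widetilde T(p, q(s_0))$ and $T(\tilde p, \tilde q(s_0))$, which are precisely the rigidity scenario.
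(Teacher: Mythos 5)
Your overall skeleton (a connectedness argument in the arc-length parameter, passage to the limit, and rigidity via Corollary \ref{equalityCase}) is the same as the paper's, and your closedness step and the reduction of the addendum to $\widetilde T(p,q)=T(\tilde p,\tilde q)$ are essentially right. But the openness step --- which you yourself flag as ``the principal obstacle'' --- is not actually proved, and it is the entire content of the lemma. Knowing $\widetilde T(p,q(s_0))\ge T(\tilde p,\tilde q(s_0))$ together with the first-order angle information $\angle o q(s_0)p\ge\angle\tilde o\tilde q(s_0)\tilde p$ at the moving endpoint only controls how the reference curve leaves $\tilde q(s_0)$ relative to the \emph{extension} of $T(\tilde p,\tilde q(s_0))$; it does not control the positional relation between the full curve $\widetilde T(p,q(s_0+\varepsilon))$ and the \emph{new} segment $T(\tilde p,\tilde q(s_0+\varepsilon))$, which differs from the old one along its whole length. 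Since the inequality at $s_0$ is not strict, ``smooth dependence of $T(\tilde p,\tilde q(s))$ on $s$'' cannot be bootstrapped into the $\theta$-wise inequality for $s>s_0$: the two curves could interleave arbitrarily close to $s_0$.

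The paper closes exactly this gap with two devices absent from your proposal. First, it works in the perturbed reference surface $\widetilde M_{\delta}$ (Remark \ref{curvature}), where $\widetilde T(p,q(t_0))=T(\tilde p,\tilde q(t_0))$ is impossible by Corollary \ref{equalityCase}; this disposes of the alternative $\widetilde T(p,q(t_0+t_j))\le T(\tilde p,\tilde q(t_0+t_j))$, which in your setup would land you in the (non-contradictory) rigidity case rather than a contradiction. Second, and more importantly, for the genuine crossing alternative it takes the last intersection point $\tilde q_j$ of $\widetilde T(p,q(t_0+t_j))$ with $T(\tilde p,\tilde q(t_0+t_j))$, shows $\tilde q_j\to\tilde q(t_0)$, and then observes via Lemma \ref{TandR} (3)--(5) and Lemma \ref{basic} (6) that the \emph{reference reverse curve} satisfies $\widetilde R(q_j,q(t_0+t_j))\le T(\tilde q_j,\tilde q(t_0+t_j))$; since $q_j$ and $q(t_0+t_j)$ are close, this contradicts the local comparison theorem, Lemma \ref{fundamental}, applied to the small triangle $\triangle o\,q_j\,q(t_0+t_j)$. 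This use of $\widetilde R$ together with the local ATCT near the far endpoint is the key idea your proposal is missing; your tangent-direction heuristic at $\tilde q(s_0)$ cannot substitute for it.
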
 
 
The point is if $\tilde q_1 \not\in Cut(\tilde p)$ or not. In case of 
$\tilde q_1 \not\in Cut(\tilde p)$ the reference curve can be extended, keeping the positional relation to a minimizing geodesic connecting its end points.
 
\begin{proof}
In order to prove the first part, we work in $\widetilde M_{\delta}$ to avoid the case where a reference curve is identified with a minimizing geodesic segment connecting its endpoints.
For convenience we set $q(t)=T(p,q)(t)$ and $\tilde q(t)=\widetilde T(p,q)(t)$ 
for any $t \in (0, d(p,q))$. Let $t_0$ be the least upper bound of the set of 
all $t_2 \le d(p,q)$ so that there exists a minimizing geodesic 
segment $T(\tilde p, \tilde q(t))$ with $\widetilde T(p,q(t)) \ge T(\tilde p, 
\tilde q(t))$ for all $t \in (0, t_2)$. 
If $t_1$ is the parameter such that $q_1=q(t_1)$, we then have $t_0 \ge t_1$ because of the assumption.

Suppose for indirect proof that $t_0 < d(p,q)$. 
Since we assume that $\tilde q(t_0) \not\in Cut(\tilde p)$, there exists a 
neighborhood $V$ of $\tilde q(t_0)$ such that $T(\tilde p, \tilde x)$ is the 
unique minimizing geodesic segment connecting $\tilde p$ and $\tilde x \in V$. 
Since the minimizing geodesic segment $T(\tilde p, \tilde q(t_0))$ is unique, 
$\widetilde T(p,q(t))\ge T(\tilde p,\tilde q(t))$ for all $t \in (0, t_0)$ implies that
 $\widetilde T(p,q(t_0)) \ge T(\tilde p, \tilde q(t_0))$. 

We will prove that there exists an $\varepsilon > 0$ such that 
$\widetilde T(p, q(t_0+t)) \ge T(\tilde p, \tilde q(t_0+t))$,\; $0 \le t \le \varepsilon $. 
Suppose that there exists a monotone decreasing sequence $t_j$ converging to $0$ such that no minimizing geodesic segment $T(\tilde p, \tilde q(t_0+t_j))$ 
satisfies $\widetilde T(p,q(t_0+t_j)) \ge T(\tilde p, \tilde q(t_0+t_j))$. 
We then have either 
\[ 
\widetilde T(p, q(t_0+t_j)) \le T(\tilde p, \tilde q(t_0+t_j)) \quad\quad 
\] 
or 
\[ 
\widetilde T(p, q(t_0+t_j)) \cap T(\tilde p, \tilde q(t_0+t_j)) \not= \{ \tilde 
p, \tilde q(t_0+t_j) \}. 
\] 

Suppose the first is true. We then have $\widetilde T(p,q(t_0))=T(\tilde p, \tilde q(t_0))$. In fact, since  $T(\tilde p, \tilde q(t_0+t_j))$ converges to $T(\tilde p, \tilde q(t_0))$ which is the unique minimizing geodesic segment connecting $\tilde p$ and $\tilde q(t_0)$, we have $\widetilde T(p, q(t_0)) \le T(\tilde p, \tilde q(t_0))$. 
Combining $\widetilde T(p,q(t_0)) \ge T(\tilde p, \tilde q(t_0))$, we conclude $\widetilde T(p,q(t_0))=T(\tilde p, \tilde q(t_0))$. Since we employ $\widetilde M_{\delta}$, this yields a contradiction because of Corollary \ref{equalityCase}. 

Suppose the second is true. Let $\tilde q_j$ be a point in $\widetilde T(p,q(t_0+t_j))\cap T(\tilde p, \tilde q(t_0+t_j))$ such that it is different from $\tilde p$, $\tilde q(t_0+t_j)$
and $T(\tilde q_j, \tilde q(t_0+t_j))\not\le  \widetilde T(p,q(t_0+t_j))$.
Let $q_j \in T(p,q(t_0+t_j))$ be the point with $F_p(q_j)=\widetilde F_{\tilde p}(\tilde q_j)$. 
If $\tilde q_j$ does not converge to the point $\tilde q(t_0)$, then there exists an accumulation point $\tilde q' \not= \tilde q(t_0)$ such that $\tilde q' \in T(\tilde p, \tilde q(t_0))$.
This situation implies that $\widetilde T(p, q(t_0)) \ge T(\tilde p, \tilde q(t_0))$ and $\widetilde T(p, q(t_0)) \cap T(\tilde p, \tilde q(t_0)) \supset \{ \tilde p, \tilde q', \tilde q(t_0) \}$, which is the assumption of the second part of this lemma, to be proved in the next paragraph.
This is impossible because we now work in $\widetilde M_{\delta}$.
We have proved that $\tilde q_j$ converges to $\tilde q(t_0)$. We then have $\tilde q(t_0+t_j)$ such that
$\widetilde T(p,q_j) \ge T(\tilde p, \tilde q_j)$ and $\widetilde T(p,q(t_0+t_j))\smallsetminus\widetilde T(p,q_j) \not\ge T(\tilde q_j, \tilde q(t_0+t_j))$, since there exists the unique minimizing geodesic segment $T(p,q_j)$ which is a subsegment of $T(p,q(t_0+t_j))$. 
On the other hand, for sufficiently large $j$, it follows from Lemma \ref{TandR} (3) that $\widetilde R(p,q(t_0+t_j))$ passes through $\tilde q_j$. 
From Lemma \ref{basic} (6) the reference reverse curve $\widetilde R(q_j,q(t_0+t_j))$ is a subarc of $\widetilde R(p,q(t_0+t_j))$ from $\tilde q(t_0+t_j)$ to $\tilde q_j$ which lies in the same side as the subarc of $\widetilde T(p,q(t_0+t_j))$ from $\tilde q_j$ to $\tilde q(t_0+t_j)$ (see Lemma \ref{TandR} (4) and (5)). 
Thus, we have the positional relation
\begin{eqnarray*} 
\widetilde R(q_j,q(t_0+t_j)) \le T(\tilde q_j, \tilde q(t_0+t_j)).
\end{eqnarray*}
However, this contradicts Lemma \ref{fundamental} near the point $\tilde q(t_0+t_j)$. We conclude that $t_0=d(p,q)$ by employing $\widetilde M_{\delta}$. Letting $\delta \rightarrow 0$ we complete the proof of the first part.

We prove the second part. 
If $\widetilde T(p,q)\smallsetminus \widetilde T(p,q') \not\subset T(\tilde p, \tilde q)$, then there exists a point $q'' \in T(q', q)$ such that $\tilde q''$ does not lie in $T(\tilde p, \tilde q)$ and hence $\tilde q'' > T(\tilde p, \tilde q)$. Therefore, $\tilde q' < T(\tilde p, \tilde q'')$, contradicting that $\widetilde T(p,q'') \ge T(\tilde p, \tilde q'')$.
Thus, we have $\widetilde T(p,q)\smallsetminus \widetilde T(p,q') \subset T(\tilde p, \tilde q)$, in other words, $\widetilde R(q',q) \subset T(\tilde p, \tilde q)$.
Let $u$ and $u'$ be points in $T(p,q)$ such that they are near $q'$ and $p$, $u$, $q'$ and $u'$ lie in this order in $T(p,q)$. If $\tilde u \not \in T(\tilde p, \tilde q)$, then we have a contradiction from Lemma \ref{fundamental} and the same argument above.
Thus, the segment $T(q',q)$ satisfying $\widetilde R(q',q) \subset T(\tilde p, \tilde q)$ can be extended until $q'$ reaches $p$. Hence, we have $\widetilde R(p,q)=T(\tilde p, \tilde q)$, and, equivalently, $\widetilde T(p,q)=T(\tilde p, \tilde q)$.
It follows from Corollary \ref{equalityCase} that $\triangle opq$ bounds a totally geodesic 2-dimensional submanifold in $M$ which is isometric to a 
comparison triangle domain $\triangle \tilde o \tilde p \tilde q$ corresponding to 
$\triangle opq$ in $\widetilde M$.
\end{proof} 
  
\section{Reference curves meeting cut points}

For two points $\tilde x,\,\tilde y\in\widetilde M^+_{\tilde p}$ with $\theta(\tilde x)\neq\theta(\tilde y)$, let $U(\tilde x,\tilde y)$ and $L(\tilde x,\tilde y)$ denote the minimizing geodesic segments joining $\tilde x$ to $\tilde y$ such that
  \[   U(\tilde x,\tilde y)\ge T(\tilde x,\tilde y)\ge L(\tilde x,\tilde y),\quad\text{for all $T(\tilde x,\tilde y)$}      \]
Notice that  $U(\tilde x,\tilde y)=L(\tilde x,\tilde y)$ if and only if $\tilde y\notin Cut(\tilde x)$ or $\tilde y\in Cut(\tilde x)$ is an end point of $Cut(\tilde x)$ such that $\tilde y$ is an isolated conjugate point to $\tilde x$ along the unique minimizing geodesic.
The following lemma is a consequence of Lemma \ref{SegEllipse} and plays an important role for the proof of our Theorems.
 
\begin{lem}\label{upper}
Assume that $B(o, p \, ; r) \subset F^{-1}_p(\widetilde F_{\tilde p}(\widetilde M^+_{\tilde p}))$.
If $q\in E(o,p \, ; r)$ is not a local maximum point of $d_r$ on 
$E(o,p \, ; r)$, then there exists a sequence of points $q_j \in E(o,p \, ; r)$ 
converging to $q$ such that $T(\tilde p, \tilde q_j)$ converges to $U(\tilde p, 
\tilde q)$ as $j \rightarrow \infty$. In particular, if $U(\tilde p, \tilde q) \not= L(\tilde p, \tilde q)$, then any extension of $U(\tilde p, \tilde q)$ crosses 
$Cut(\tilde p)$ from the far side of $\tilde o$ to the near side of $\tilde o$. 
\end{lem}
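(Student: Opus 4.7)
The plan is to translate the non-local-max hypothesis on $E(o,p\,;r)$ into an approximating sequence on $E(\tilde o,\tilde p\,;r)^+$ that approaches $\tilde q$ from strictly larger $r$-coordinate, and then invoke Lemma \ref{SegEllipse} directly.

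Since $q$ is not a local maximum of $d_r$ on $E(o,p\,;r)$, I can pick a sequence $q_j\in E(o,p\,;r)$ with $q_j\to q$ and $d(o,q_j)>d(o,q)$. The inclusion $B(o,p\,;r)\subset F_p^{-1}(\widetilde F_{\tilde p}(\widetilde M_{\tilde p}^+))$ furnishes reference points $\tilde q_j:=\widetilde F_{\tilde p}^{-1}\circ F_p(q_j)\in\widetilde M_{\tilde p}^+$. The identities $d(\tilde o,\tilde q_j)+d(\tilde p,\tilde q_j)=d(o,q_j)+d(p,q_j)=r$ and $r(\tilde q_j)=d(o,q_j)>d(o,q)=r(\tilde q)$ place the $\tilde q_j$ on $E(\tilde o,\tilde p\,;r)^+$ with strictly larger $r$-coordinate than $\tilde q$. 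Lipschitz continuity of $\widetilde F_{\tilde p}^{-1}$ on the interior gives $\tilde q_j\to\tilde q$, and then Lemma \ref{SegEllipse} delivers $T(\tilde p,\tilde q_j)\to U(\tilde p,\tilde q)$ at once. This settles the first assertion.

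For the ``in particular'' statement, assume $U\neq L$, so $\tilde q\in Cut(\tilde p)$ and the decomposition of $B(\tilde o,\tilde p\,;r)^+$ into $B_1,B_0,B_2$ described before Lemma \ref{SegEllipse} is nontrivial, with $B_1$ the component on the far-from-$\tilde o$ side of $U$ (the side meeting the meridian through $\tilde p$ and containing the maximum-$r$ arc of the ellipse). From the proof of Lemma \ref{SegEllipse}, every $T(\tilde p,\tilde q_j)$ lies in $B_1$; passing to the limit, $U$ approaches $\tilde q$ from the far-from-$\tilde o$ side of $Cut(\tilde p)$. Any extension of $U$ past $\tilde q$ continues along the same tangent direction, so it must leave the far side; once transversality of $U$ to $Cut(\tilde p)$ at $\tilde q$ is known, the extension enters the near-from-$\tilde o$ component, crossing $Cut(\tilde p)$ at $\tilde q$ in the asserted sense.

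The main obstacle is precisely this transversality: a priori one could worry that $U$ is tangent to $Cut(\tilde p)$ at $\tilde q$, in which case the extension might remain on the far side. To rule that out I would invoke the standard bisector property of the cut locus at a point where two minimizing geodesics meet: the tangent to $Cut(\tilde p)$ at $\tilde q$ bisects the angle between the incoming directions $-\dot U(d(\tilde p,\tilde q))$ and $-\dot L(d(\tilde p,\tilde q))$. If $U$ were tangent to $Cut(\tilde p)$ at $\tilde q$, the bisector property would force $-\dot L$ to coincide with $-\dot U$, hence $L=U$ by uniqueness of geodesics with prescribed initial velocity, contradicting $U\neq L$. Thus $U$ meets $Cut(\tilde p)$ transversally at $\tilde q$, and the genuine crossing from the far side to the near side follows.
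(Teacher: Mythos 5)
Your proof of the main assertion is correct and is essentially the paper's own argument: the authors likewise take $q_j\in E(o,p\,;r)$ with $d(o,q_j)>d(o,q)$, note that the reference points lie on $E(\tilde o,\tilde p\,;r)^+$ with larger $r$-coordinate than $\tilde q$, and conclude $T(\tilde p,\tilde q_j)\to U(\tilde p,\tilde q)$ from the ordering of minimizing segments to points of the ellipse; indeed the paper introduces Lemma \ref{SegEllipse} expressly so that Lemma \ref{upper} is ``a consequence'' of it, which is exactly how you use it. (One cosmetic remark: $\widetilde F_{\tilde p}^{-1}$ is only \emph{locally} Lipschitz on the interior, and $\tilde q$ could lie on a boundary meridian; but $\tilde q_j\to\tilde q$ follows anyway from the injectivity of $\widetilde F_{\tilde p}$ together with $d(\tilde o,\tilde q_j)\to d(\tilde o,\tilde q)$ and $d(\tilde p,\tilde q_j)\to d(\tilde p,\tilde q)$.)

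For the ``in particular'' clause the paper offers no written argument, so your bisector discussion is a genuine addition; it is essentially right but has one soft spot. The bisector property in the form you quote --- the tangent of $Cut(\tilde p)$ at $\tilde q$ bisects the angle between $-\dot U$ and $-\dot L$ --- presupposes that exactly two minimizing segments reach $\tilde q$ and that $\tilde q$ is a regular (non-branch, non-conjugate) point of the cut locus, so that $Cut(\tilde p)$ is a single smooth arc there. At a branch point of the tree $Cut(\tilde p)$ several edges emanate from $\tilde q$, one bisecting each sector between consecutive minimizing directions, and your statement does not apply verbatim. A way to avoid this case analysis is the first variation formula: with $\alpha=\angle(\dot U,\dot L)>0$ at $\tilde q$, the distance to $\tilde p$ measured along the extension $U(d(\tilde p,\tilde q)+s)$ via the branch of geodesics near $U$ grows at unit rate, while via the branch near $L$ it grows at rate $\cos\alpha<1$; hence for small $s>0$ the extension lies where the $L$-branch is strictly shorter, i.e.\ on the near side of $Cut(\tilde p)$, whereas for $s<0$ the segment $U$ is minimizing and lies on the far side. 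This yields the crossing from far to near without any smoothness or multiplicity assumption at $\tilde q$, and in the regular case it also recovers your transversality claim.
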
 
 
\begin{proof} 
Let $q_j$ be a sequence of points in $E(o,p \, ; r)$ converging to $q$ such that 
$d(o,q_j) > d(o,q)$ for all $j$. Then we have 
\begin{eqnarray*} 
d(p,q_j) &=& r - d(o,q_j) \\ 
&< & r - d(o,q) = d(p,q). 
\end{eqnarray*} 
In view of Lemma \ref{ellipse}, we observe that $T(\tilde p, \tilde q_j)$ does not cross $[\theta (\tilde q) 
\le \theta ]\cap [r = d(o,q)]$. This means that $T(\tilde p, \tilde q_j)\smallsetminus\{ 
\tilde p\} > T(\tilde p, \tilde q) \smallsetminus \{ \tilde p \}$ for every minimizing geodesic segment $T(p,q)$. Therefore, $T(\tilde p, \tilde q_j)$ converges to 
$U(\tilde p, \tilde q)$ as $j \rightarrow \infty$.
\end{proof} 
 
We observe from Lemma \ref{comparison} that $\widetilde T(p,q) \ge U(\tilde p, \tilde q)$ 
if $\widetilde T(p,q) \smallsetminus \{ \tilde q \}\cap Cut(\tilde p) =\emptyset$ and $q \in E(o, p \, ; r)$ is not a local maximum point of $d_r$.

In the proof of the following lemma, we need an orientation of the intersection points of curves and $Cut(\tilde p)$. Let $\tilde x \in Cut (\tilde p)$. A curve $c(\theta )$, $\tilde x=c(\theta_0)$, parameterized by angle coordinate $\theta $ is said to intersect $Cut(\tilde p)$ {\it positively} (resp., {\it negatively}) at a point $\tilde x=c(\theta_0)$ if there is a small neighborhood $\Omega$ around $\tilde x$ such that  $c\cap\Omega\ge Cut(\tilde p)\cap \Omega$ for $\theta \le \theta_0$, (resp., $c\cap\Omega\le Cut(\tilde p)\cap \Omega$ for $\theta \le \theta_0$). 
Intuitively, "intersecting positively" means that $c$ meets $Cut (\tilde p )$ from the far side with respect to $\tilde o$.

\begin{lem}\label{positiveCross}
Let $q \in M$ and let $T(p,q)$ be a minimizing geodesic segment. Assume that $T(p, q) \subset F^{-1}_p(\widetilde F_{\tilde p}(\widetilde M^+_{\tilde p}))$. Suppose all 
intersection points of $\widetilde T(p,q)$ and $Cut(\tilde p)$ are positive. 
Then, we have 
\[ 
\widetilde T(p, q(t)) \ge U(\tilde p, \tilde q(t)),\quad 0 \le t < d(p,q). 
\] 
Here we set $q(t)=T(p,q)(t)$, $0 \le t \le d(p,q)$. 
\end{lem}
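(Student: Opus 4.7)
The plan is to adapt the continuity-in-parameter scheme of Lemma \ref{comparison}'s proof, now allowing $\widetilde T(p,q)$ to cross $Cut(\tilde p)$ positively. Writing $\tilde q(t):=\widetilde T(p,q)(t)$, define
\[
t^{*} := \sup\bigl\{\, t \in [0, d(p,q)] \;:\; \widetilde T(p, q(s)) \ge U(\tilde p, \tilde q(s))\ \text{for all}\ s \in [0,t]\,\bigr\}.
\]
Lemma \ref{fundamental} gives $t^{*} > 0$; I would argue $t^{*} = d(p,q)$ by contradiction, so suppose $t^{*} < d(p,q)$.

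First I would pass the inequality to the limit as $s\to t^{*-}$. If $\tilde q(t^{*}) \notin Cut(\tilde p)$, then $U(\tilde p, \cdot) = L(\tilde p, \cdot)$ is the unique minimizing segment and depends continuously on its endpoint near $\tilde q(t^{*})$, so the inequality passes directly. If $\tilde q(t^{*}) \in Cut(\tilde p)$, positivity of the crossing places $\tilde q(s)$ on the far side of $Cut(\tilde p)$ from $\tilde o$ for $s$ just below $t^{*}$; the unique minimizing $T(\tilde p, \tilde q(s))$ therefore converges to $U(\tilde p, \tilde q(t^{*}))$, so the inequality passes again. In both cases $\widetilde T(p, q(t^{*})) \ge U(\tilde p, \tilde q(t^{*})) \ge L(\tilde p, \tilde q(t^{*}))$. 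When $\tilde q(t^{*}) \notin Cut(\tilde p)$, Lemma \ref{comparison} applied with $q_{1} = q(t^{*}-\varepsilon)$ for small $\varepsilon>0$ extends the relation past $t^{*}$, contradicting maximality.

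The substantive case is a positive crossing $\tilde q(t^{*}) \in Cut(\tilde p)$. For $t>t^{*}$ just past $t^{*}$, $\tilde q(t)$ lies on the near side of $Cut(\tilde p)$, so the unique minimizing segment $T(\tilde p, \tilde q(t)) = U(\tilde p, \tilde q(t))$ converges to $L(\tilde p, \tilde q(t^{*}))$ as $t \to t^{*+}$. Since $\widetilde T(p, q(t^{*})) \ge L(\tilde p, \tilde q(t^{*}))$ and $\widetilde T(p, q(t))$ varies continuously in $t$, one expects the inequality $\widetilde T(p,q(t)) \ge T(\tilde p, \tilde q(t))$ to persist for $t$ slightly larger than $t^{*}$, after which Lemma \ref{comparison} extends the relation throughout the next cut-point-free interval, contradicting maximality. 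I would verify persistence by contradiction: if it fails along a sequence $t_{j} \downarrow t^{*}$, the curves $\widetilde T(p, q(t_{j}))$ and $T(\tilde p, \tilde q(t_{j}))$ must cross inversely at interior defect points $\tilde y_{j}$ whose accumulation points lie on $\widetilde T(p, q(t^{*})) \cap L(\tilde p, \tilde q(t^{*})) \subset \{\tilde p, \tilde q(t^{*})\}$. The case $\tilde y_{j} \to \tilde p$ is excluded by Lemma \ref{fundamental}. The main obstacle is the case $\tilde y_{j} \to \tilde q(t^{*})$; I would handle it by the device used in Lemma \ref{comparison}'s proof, namely pulling the defect $\tilde y_{j}$ back via Lemma \ref{TandR}(4)--(5) to a reversed positional relation between the reference reverse curve $\widetilde R(q_{j}, q(t_{j}))$ and a short minimizing geodesic segment close to $\tilde q(t_{j})$, which contradicts the local ATCT provided by Lemma \ref{fundamental} near $\tilde q(t_{j})$.
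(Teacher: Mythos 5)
Your overall architecture coincides with the paper's: the same critical parameter $t^{*}$ (the paper's $t_0$), the same use of Lemma \ref{comparison} when $\tilde q(t^{*})\notin Cut(\tilde p)$, the same passage to the limit from below using positivity, and the same ``defect point plus reference reverse curve plus local comparison'' device to rule out an inverse crossing near $\tilde q(t_j)$. The one place where your argument has a genuine hole is the sentence ``For $t>t^{*}$ just past $t^{*}$, $\tilde q(t)$ lies on the near side of $Cut(\tilde p)$.'' This does \emph{not} follow from the hypothesis that all intersections are positive: by the paper's definition, a positive intersection at $\tilde x=c(\theta_0)$ only constrains the curve for $\theta\le\theta_0$ (it approaches $Cut(\tilde p)$ from the far side); it is entirely consistent with that definition for $\widetilde T(p,q)$ to touch $Cut(\tilde p)$ at $\tilde q(t^{*})$ and immediately return to the far side, in which case the isolated touching point is still a positive intersection and your hypothesis is not violated. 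Since your subsequent steps hinge on $T(\tilde p,\tilde q(t))$ being unique and converging to $L(\tilde p,\tilde q(t^{*}))$ as $t\downarrow t^{*}$ --- which is exactly what fails if $\tilde q(t)$ stays on the far side, where the limit would instead be $U(\tilde p,\tilde q(t^{*}))$ --- the descent must be \emph{proved}, not asserted.

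The paper devotes the central paragraph of its proof to precisely this point: assuming a sequence $t_j\downarrow t_0$ with $\tilde q(t_j)$ in the region bounded below by $U(\tilde p,\tilde q(t_0))\cup Cut(\tilde p)$, the minimizing segments $T_j=T(\tilde p,\tilde q(t_j))$ converge to $U(\tilde p,\tilde q(t_0))$, hence (working in $\widetilde M_{\delta}$, where $\widetilde T(p,q)([0,t_0])$ is strictly above $U$ away from the endpoints) $T_j$ must meet $\widetilde T(p,q(t_j))$ at interior points $\tilde q(t'_j)\to\tilde q(t_0)$, producing a short sub-arc with the reversed relation $\widetilde T(p,q)([t'_j,t_j])\le T(\tilde q(t'_j),\tilde q(t_j))$, contradicting Lemmas \ref{fundamental} and \ref{TandR}. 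Note that this is the very same machinery you deploy in your final persistence step; so the repair is within reach (indeed, if you ran your defect-point argument against whichever of $U$ or $L$ the segments $T(\tilde p,\tilde q(t_j))$ actually converge to, both cases would be covered). But as written, the proposal assumes the crux. Two smaller points: you should also record that, once the descent is established, $\tilde q(t^{*}+t)\notin Cut(\tilde p)$ (a point of $Cut(\tilde p)$ approached from below would be a negative intersection, contrary to hypothesis), and your persistence-by-contradiction should explicitly dispose of the alternative failure mode $\widetilde T(p,q(t_j))\le T(\tilde p,\tilde q(t_j))$ with no interior crossing, which in the limit forces $\widetilde T(p,q(t^{*}))=U(\tilde p,\tilde q(t^{*}))$ or $=L(\tilde p,\tilde q(t^{*}))$ and is excluded only by the $\widetilde M_{\delta}$ modification of Remark \ref{curvature} together with Corollary \ref{equalityCase}.
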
 
 
Notice again that if $q \not\in Cut(p)$, then there exists a unique minimizing geodesic segment $T(p,q)$, and hence, the reference curve $\widetilde T(p,q)$ is 
uniquely determined. However, this does not mean that the reference curve 
connecting $\tilde p$ and $\tilde q$ uniquely exists, because $F^{-1}_p( 
\widetilde F_{\tilde p}(\tilde q))$ may not be a single point. If $\tilde q \not\in 
Cut(\tilde p)$, then there exists a unique minimizing geodesic segment $T(\tilde 
p, \tilde q)$, and hence, $T(\tilde p, \tilde q)=U(\tilde p, \tilde q)=L(\tilde 
p, \tilde q)$. 
However, if $\tilde q \in Cut(\tilde p)$, then there may be many minimizing geodesic segments $T(\tilde p, \tilde q)$. So the positional relation between $\widetilde T(p,q)$ and $U(\tilde p, \tilde q)$ is unknown, in general. These facts are often used without notice.

\begin{proof}
We work in $\widetilde M_{\delta}$ instead of the reference surface $\widetilde M$. We choose $\delta $ to be sufficiently small so that $\widetilde M_{\delta}$ satisfies the assumption in this lemma.
Let $t_0$ be the least upper bound of the set of all $t_1 \in (0, d(p,q))$ 
such that $\widetilde T(p,q(t)) \ge U(\tilde p, \tilde q(t))$ for all $t \in (0, t_1)$. 
We already know that $t_0 > 0$. Suppose for indirect proof that $t_0 < 
d(p,q)$. If $\tilde q(t_0) \not\in Cut(\tilde p)$, then, from Lemma \ref{comparison}, there exists an
$\varepsilon > 0$ such that $\widetilde T(p,q(t_0+t)) \ge U(\tilde p, \tilde 
q(t_0+t))$ for all $t \in (0, \varepsilon )$. This 
contradicts the choice of $t_0$. 

Suppose $\tilde q(t_0) \in Cut(\tilde p)$. 
Since the minimizing geodesic segment $T(p, q(t_0))$ is unique and $\tilde q(t_0)$ is a positive cut point, it follows that $\widetilde T(p,q(t_0)) \ge U(\tilde 
p, \tilde q(t_0))$. We prove that there exists an $\varepsilon > 0$ such 
that $\widetilde T(p, q(t_0+t)) \le Cut(\tilde p)$ for all $t \in (0, \varepsilon )$. In fact, suppose this is not true. 
Then, there exist a sufficiently small neighborhood $\Omega$ around $\tilde q(t_0)$ and  a sequence $t_j > 
t_0$ such that $t_j$ converges to $t_0$ and $\tilde q(t_j)$ is contained in the subdomain of $\Omega$ 
bounded below by $U(\tilde p, \tilde q(t_0))\cup Cut(\tilde p)$. Let $T_j$ be a minimizing geodesic segment 
connecting $\tilde p$ and $\tilde q(t_j)$. Then, we know that $T_j\cap \widetilde T(p,q(t_j))\not= 
\{ \tilde p, \tilde q(t_j) \}$, since $T_j$ converges to $U(\tilde p, \tilde q(t_0))$ and $\widetilde T(p,q)([0,t_0]) \not= T(\tilde p, \tilde q(t_0))$. If $\tilde q(t'_j)=\widetilde T(p,q)(t'_j) \in T_j\cap \widetilde T(p,q(t_j))$ and $\tilde q (t'_j) \not\in \{ \tilde p, \tilde q(t_j) \}$, then $\tilde q(t'_j)$ converges to $\tilde q(t_0)$ and it follows 
that 
\[ 
\widetilde T(p,q)([t'_j,t_j]) \le T(\tilde q(t'_j),\tilde q(t_j)). 
\] 
 
However, this contradicts Lemma \ref{fundamental} and Lemma \ref{TandR}. Thus we see that there exists an $\varepsilon > 0$ such that $\widetilde T(p, q(t_0+t)) \le Cut(\tilde p)$ 
for all $t \in (0, \varepsilon )$, and, therefore, $\tilde q(t_0+t) \not\in 
Cut(\tilde p)$ for all small $t > 0$. 
Since $a:=d(o,q(t_0))+ d(p,q(t_0)) < d(o, q(t_0+t))+ d(p, q(t_0+t))$, the point $\tilde q(t_0+t)$ is outside $E(o, p \, ; a)$. 
Therefore, $\tilde q(t_0+t)$  is in the subdomain of $\Omega$ bounded above by $L(\tilde p, \tilde q(t_0))\cup Cut(\tilde p)$.
Since a sequence of unique minimizing geodesic segments 
$T(\tilde p, \tilde q(t_0+t))$ converges to $L(\tilde p, \tilde q)$ as $t 
\rightarrow 0$, it follows that $\widetilde T(p, q(t_0+t)) \ge U(\tilde p, 
\tilde q(t_0+t))$ for all small $t > 0$. This contradicts the choice of 
$t_0$.
\end{proof} 

\section{Proof of Theorems} 
We are ready to prove Theorems \ref{restate1} and \ref{restate2}. 
 
\begin{proof}[Proof of Theorems \ref{restate1} and \ref{restate2}] 
Let $r_0$ be the least upper bound of the set of all $r_1 > d(o,p)$ 
satisfying the following properties: 
Let $r \in (d(o,p), r_1)$ and $q \in E(o,p \, ;r)$. Then,
 
\begin{enumerate} 
\item[(C1)]  there exists a minimizing geodesic segment $T(p, q)$ such that $T(p,q)$ is contained in the set $F_p{}^{-1}(\widetilde F_{\tilde p}(\widetilde M_{\tilde p}^+))$ and $\widetilde T(p,q) \ge U(\tilde p, \tilde q) $, 
\end{enumerate}
and
\begin{enumerate}
\item[(C2)] every minimizing geodesic segment $T(p,q)$ is contained in the set $F_p{}^{-1}( \widetilde F_{\tilde p}(\widetilde M_{\tilde p}^+))$ and satisfies
$\widetilde T(p,q) \ge L(\tilde p, \tilde q) $.
\end{enumerate} 
 
As was seen in Lemma \ref{fundamental}, we have $r_0 > d(o,p)$.
Let $r$ be such that $d(o,p) < r < r_0$. 
Let $q \in E(o,p \, ;r)$, $q_1(t)=T(o,q)(t)$ and $\tilde q_1(t)=\widetilde 
T(o,q)(t)$ for any $t \in [0, d(o,q)]$. Then, we have 
\begin{eqnarray*} 
&& d(o,q_1(t))+d(p,q_1(t)) \\ 
&= & d(o,q)-d(q, q_1(t)) + d(p, q_1(t)) \\ 
&\le &  d(o,q) + d(p, q) =r < r_0 
\end{eqnarray*} 
for any $t \in (0, d(o,q))$, and hence, from the condition (C2), every $\triangle opq_1(t)$ in $M$ has a comparison triangle $\triangle \tilde o \tilde 
p \tilde q_1(t) $ in $\widetilde M_{\tilde p}^+$ satisfying (2.2). Moreover, from the condition (C1), there exists a minimizing geodesic segment $T(p, q_1(t))$ such that  $\widetilde T(p,q_1(t)) \ge U(\tilde p, \tilde q_1(t))$.
 
\begin{ass}\label{Ass1}
Let $q \in E(o, p \, ; r)$ with $d(o, p) < r <r_0$. Assume that the minimizing geodesic segments  $T(p, q)$ and $T(\tilde p, \tilde q)$ satisfy $\widetilde T(p,q) \ge T(\tilde p, \tilde q)$. Then for any minimizing geodesic segments $T(o, p)$ and $T(o, q)$ the geodesic triangle $\triangle opq = T(o, p)\cup T(p, q) \cup T(o,q)$ has the comparison triangle $\triangle \tilde o \tilde p \tilde q$ with edge $T(\tilde p, \tilde q)$ which satisfies $(2.3)$. 
\end{ass}
\begin{proof}
It follows from Lemmas \ref{baseangle} (3), \ref{fundamental} and Corollary \ref{equalityCase} that $\angle opq \ge \angle \tilde o \tilde p \tilde q $ and $\angle oqp \ge \angle \tilde o \tilde q \tilde p$ where one of two equalities holds if and only if the geodesic triangle $\triangle opq$ bounds a totally geodesic 2-dimensional submanifold in $M$ which is isometric to a comparison triangle domain  $\triangle \tilde o \tilde p \tilde q$ corresponding to $\triangle opq$ in $\widetilde M$, because $\widetilde T(p,q)=T(\tilde p, \tilde q)$. 

In order to show $\angle poq \ge \angle \tilde p \tilde o \tilde q $, we employ $\widetilde M_{\delta (r_0)}$ instead of $\widetilde M$ and prove 
that $\theta (t) := \theta (\tilde q_1(t))=\angle \tilde p \tilde o \tilde q_1(t)$ is monotone non-increasing in $t \in [0, d(\tilde o, \tilde q)]$. 
Let $g_t(s)=d(p, q_1(t+s))$ and 
$\tilde g_t(s)=d(\tilde p, T(\tilde o, \tilde q_1(t))(t+s))$ for sufficiently small 
$s >0$. 
Here, since $T(\tilde o, \tilde q_1(t))$ lies in the meridian through $\tilde q_1(t)$, we can define the point $T(\tilde o, \tilde q_1(t))(t+s)$ for any $s \in [-t, \ell - t])$.
It follows from the conditions (C1), (C2) and Lemma \ref{baseangle} that $\pi \ge \angle oq_1(t)p > \angle \tilde o \tilde q_1(t) \tilde p$ for every 
$t \in (0, d(o,q))$. 
Let $g_{t}{}_+'(0)$ denote the 
right hand derivative at $s=0$, namely, 
\[ 
g_t{}_+'(0)=\lim_{h \to 0+0}\frac{g_t(h)-g_t(0)}{h}. 
\]
If $\alpha(t)$ is the angle of $T(\tilde o, \tilde q_1(t))$ with $U(\tilde p, \tilde q_1(t))$, then the first variation formula implies that $\tilde g_t{}_+'(0)=\cos \alpha (t)$ for every $t \in (0, d(o,q))$.
Hence, we have $g_t{}_+'(0) < \tilde g_t{}_+'(0)$ because of the condition (C1) and Lemma \ref{TandR} (4). 
There exists an 
$\varepsilon > 0$ such that 
\begin{eqnarray*} 
d(\tilde p, \tilde q_1(t+s)) &=& d(p,q_1(t+s)) = g_t(s) \\
&<& \tilde g_t(s) = d(\tilde 
p, T(\tilde o, \tilde q_1(t))(t+s)) 
\end{eqnarray*} 
for all $s \in (0, \varepsilon )$. Since $r(\tilde q_1(t+s))=r(T(\tilde o, \tilde 
q_1(t))(t+s))=t+s$, $\theta (T(\tilde o, \tilde q_1(t))(t+s))= \theta (T(\tilde o, 
\tilde q_1(t))(t))=:\theta (t)$, and $\theta (t+s) := \theta (\tilde q_1(t+s))$, it 
follows from Lemma 11 (2) that $\theta (t) > \theta (t+s)$ for all $s \in (0,
\varepsilon )$. Thus, we have 
\[ 
\angle poq = \angle \tilde p \tilde o \tilde q_1(0) > \angle \tilde p \tilde o 
\tilde q_1(d(o,q)) = \angle \tilde p \tilde o \tilde q. 
\]
Thus, we have $\angle poq \ge \angle \tilde p \tilde o \tilde q$, employing the reference surface $\widetilde M$ as $\delta (r_0)$ goes to $0$.
Here, the equality holds if and only if there exists a geodesic triangle $\triangle opq$ such that it bounds a totally geodesic 2-dimensional submanifold in $M$ which is isometric to a comparison triangle domain $\triangle \tilde o \tilde p \tilde q$ corresponding to $\triangle opq$ in $\widetilde M$ with edge $L(\tilde p, \tilde q)$.
\end{proof}

\begin{ass}\label{Ass2}
If $E(o, p \, ; r_0)\not= \emptyset$, then every point $q \in E(o, p \, ; r_0)$ satisfies that the conditions {\rm (C1)} and {\rm (C2)}. In particular, $B(o, p \, ; r_0) \subset F_p^{-1}(\widetilde F_{\tilde p}(\widetilde M_{\tilde p}^+))$.
\end{ass}
\begin{proof}
We first prove that for every point 
$q \in E(o,p \, ;  r_0)$ any minimizing geodesic segment $T(p, q)$ satisfies 
$\widetilde T(p,q) \ge L(\tilde p, \tilde q)$. Suppose $\angle oqp \not= \pi$. Let $q_j \in T(p,q) \smallsetminus \{ p, q \}$ be a sequence of points converging to $q$. Then, $r(q_j) < r_0$ is satisfied. Hence, it follows from the definition of $r_0$ 
that $\widetilde T(p,q) \ge L(\tilde p, \tilde q)$
holds as the limit of $\widetilde T(p,q_j) \ge L(\tilde p, \tilde q_j)$. 

When $\angle oqp=\pi$, it is possible that there is no sequence of points $q_j$ with $r(q_j) < r_0$ such that $q_j \rightarrow q$ (see Example \ref{Cylinder}). 
In this case, there exists a cut point $p'$ (resp., $o'$) of $o$ (resp., $p$) in $T(p,q) \smallsetminus \{ p, q \}$ (resp., $T(o,q) \smallsetminus \{ o, q \}$). 
In particular, there exists the unique minimizing geodesic segment $T(p,q)$ connecting $p$ and $q$ and $T(p,q) \supset T(p,p')$. 
Since any point $q_j \in T(p,p')\smallsetminus \{ p, p' \}$ satisfies $d(o,q_j) + d(p, q_j) < r_0$, we have $\widetilde T(p, p') \ge L(\tilde p, \tilde p')$. 
We notice that $\widetilde T(p,q)$ is the union of $\widetilde T(p,p')$ and the subarc of $E(\tilde o, \tilde p \, ; r_0)$ from $\tilde p'$ to $\tilde q$. 
Therefore, we have $\widetilde T(p,q) \ge U(\tilde p, \tilde q) \ge L(\tilde p, \tilde q)$ (see Lemma \ref{SegEllipse}).

We next prove that there exists for every point 
$q \in E(o,p \, ;  r_0)$ a minimizing geodesic segment $T(p, q)$ such that 
$\widetilde T(p,q) \ge U(\tilde p, \tilde q)$. This is the condition (C1).
If $q \not\in F_p{}^{-1}( \widetilde F_{\tilde p}(Cut(\tilde p)\cap {\rm 
Int}(\widetilde M_{\tilde p}^+)))$, then it is clear that there exists a 
minimizing geodesic segment $T(p, q)$ such that $\widetilde T(p,q) \ge U(\tilde 
p, \tilde q)$, since there is the unique minimizing geodesic segment $U(\tilde 
p, \tilde q)=L(\tilde p, \tilde q)$ connecting $\tilde p$ and $\tilde q$. 

If $q \in F_p{}^{-1}( \widetilde F_{\tilde p}(Cut(\tilde p)\cap {\rm 
Int}(\widetilde M_{\tilde p}^+)))$, then $q \not\in E_p(r_0)$ follows from the 
assumption of Theorems.
The assumption (2.1) is used only at this point.
Hence, $q$ is not a local maximum point of the distance function to $o$ restricted to $E(o, p \, ; r_0)$, namely
$d_{r_0} : E(o,p\, ;r_0)\rightarrow \mathbb{R}$. Therefore, there exists a 
sequence of points $q_j \in E(o,p \, ;r_0)$ such that $q_j \rightarrow q$ 
with $d(o,q_j) > d(o,q)$. Since the sequence of minimizing geodesic segments 
$T(\tilde p, \tilde q_j)$ and the sequence of curves $\widetilde T(p,q_j)$ 
converges to $U(\tilde p, \tilde q)$ (see Lemma \ref{SegEllipse}) and a curve $\widetilde T(p,q)$, 
respectively, it follows that there exists a minimizing geodesic segment $T(p, 
q)$ such that $\widetilde T(p,q) \ge U(\tilde p, \tilde q)$.
\end{proof}

Up to this point we have proved (2.2) and (2.3) for all points $q \in M$ with $d(o,q) + d(p,q) \le r_0$.
In order to prove that $M \subset B(o, p \, ; r_0)$, we suppose $M \smallsetminus B(o, p \, ; r_0) \not= \emptyset$ and derive a contradiction.

When we employ $\widetilde M_{\delta(R)}$, $R > r_0$, as the reference surface of $M$  (see Remark \ref{curvature}) and make the same arguments as in the proofs of Assertions \ref{Ass1} and \ref{Ass2}, we have $r_0(R)$ instead of $r_0$. 
We prove $M \subset B(o, p \, ; r_0(R))$ for all $R > r_0$ which contradicts $M \smallsetminus B(o, p \, ; r_0) \not= \emptyset$ because $\delta (R) \rightarrow 0$ as $R \rightarrow \ell$.

\begin{ass}\label{Ass3}
Suppose that $M \smallsetminus B(o, p \, ; r_0(R)) \not= \emptyset$. Then, there exists an $r_1 > r_0$  such that all points $x \in M$ with $d(o,x)+d(p,x) < r_1$ belong to $F_p{}^{-1}( \widetilde F_{\tilde p}(\widetilde M_{\delta (R)\tilde p}^+))$.
\end{ass}

\begin{proof}
Since $B(o, p \, ; r_0(R))$ is compact, 
it suffices to find an open set $U$ containing $E(o, p \, ; r_0(R))$ such that every point $q \in U$ and every minimizing geodesic segment $T(p,q)$ have the reference point $\tilde q$ and the reference curve $\widetilde T(p,q)$, respectively. 
Let $q \in E(o, p \, ; r_0(R))$. As was seen in the proof of Assertion \ref{Ass1}, $\theta (\tilde u)$ is monotone non-increasing as $u$ moves from $o$ to $q$ along $T(o,q)$. We use this fact to determine the location of the reference curve $\widetilde T(o,q)$ and to study its property.

The complicated case is that the angle of $T(\tilde o, \tilde p)^\cdot (0)$ with $T(\tilde o, \tilde q)^\cdot (0)$ is $\pi $. 
Suppose that $\angle \tilde p \tilde o \tilde q=\pi$. 
It follows from Assertions \ref{Ass1} and \ref{Ass2} that $\angle poq'=\pi$ for all $q'$ whose reference point is $\tilde q'=\tilde q$. 
Hence, we have $q' = q$ if $\tilde q' =\tilde q$.
Moreover, $\angle oup = \angle \tilde o \tilde u \tilde p$ for all $u \in T(o,q)$. 
If there exists a point $u \in T(o,q)$ such that $\angle oup \not=0$, then there exists a minimizing geodesic segment $T(p,u)$ such that $\triangle oup$ bounds a totally geodesic 2-dimensional submanifold which is isometric to the comparison triangle domain $\triangle \tilde o \tilde u \tilde p$ in $\widetilde M$.
This contradicts the present curvature condition. 
Thus, we obtain $\angle oup = \angle \tilde o \tilde u \tilde p = 0$ for all $u \in T(o,q)$. Therefore, both $T=T(p,o)\cup T(o,q)$ and $\widetilde T=T(\tilde p, \tilde o)\cup T(\tilde o, \tilde q)$ are minimizing geodesic segments.

In addition to $\angle \tilde p \tilde o \tilde q=\pi$, suppose $\tilde q \in Cut (\tilde p)$. From the present curvature assumption, $\tilde q$ is not a point conjugate to $\tilde p$ along $\widetilde T$. Hence, $\tilde q$ is not an end cut point of $\tilde p$ but branch or regular.
In particular, $U(\tilde p, \tilde q)$ is different from $\widetilde T$.
Since $q \not\in Cut (o)$, Lemmas \ref{notmaximum} and \ref{upper}, there exists a minimizing geodesic segment $T(p,q)$ such that $\widetilde T(p,q) \ge U(\tilde p, \tilde q)$. 
Thus, we conclude that $q$ is a cut point of $p$, since $T(p,q)$ is different from $T$.

Let $W'$ be a neighborhood of $q$ which is foliated by minimizing geodesic segments from $o$. Then, from the present curvature assumption, there exist a neighborhood $W \subset W'$ of $q$ and an $\varepsilon > 0$ such that, choosing the appropriate geodesic triangles, $\angle oq'p - \angle \tilde o \tilde q' \tilde p >\varepsilon $ for all points $q' \in W \cap E(o, p \, ; r_0(R))$.
Using this property and the same method as in the proof of Assertion \ref{Ass1}, we can have a neighborhood $V'_q$ of $q$ such that all points in $V'_q \smallsetminus T(o,q)$ have their reference points in ${\rm Int}(\widetilde M_{\delta (R)\tilde p}^+)$.

We next suppose $\tilde q \not\in Cut (\tilde p)$, in addition to $\angle \tilde p \tilde o \tilde q=\pi$. Let $T_e(p,o)$ be the maximal minimizing geodesic from $p$ through $o$.
If $q$ lies in $T_e(p, o)$ but not the endpoint, then the minimizing geodesic segment $T(p,q)$ is unique and $T(p, q) \subset T_e(p,o)$.
Even if $q$ is the endpoint of $T_e(p,o)$, then $T_e(p,o)=T(p,o)\cup T(o,q)$ is a minimizing geodesic segment.  
Therefore, we have $\widetilde T(p,q)=T(\tilde p, \tilde o)\cup T(\tilde o, \tilde q)$ as its reference curve, because $\angle oqp=0$ and (2.3).  

Let $N$ be the normal neighborhood around $o$, namely the domain around $o$ bounded by $Cut (o)$. Obviously, $T(p, q)=T(p,o)\cup T(o,q) \subset N$.
Because $\tilde q \not \in Cut (\tilde p)$, we can have a neighborhood $\widetilde U_{\tilde q}$ of $\tilde q$ in $\widetilde M_{\delta (R)}$ such that for all $\tilde x \in \widetilde U_{\tilde q}\cap \widetilde M_{\delta (R) \tilde p}^+$, if  we write $T(\tilde p, \tilde x)(t)=(r(t), \theta (\tilde p)+\theta (t))$ for all $t \in [0, d(\tilde p, \tilde x)]$, 
then $\exp_o(r(t)(\cos \theta (t) u+ \sin \theta (t) v)) \in N$, $0 \le t \le d(\tilde p, \tilde x)$.
Here $\exp_o : T_oM \rightarrow M$ is the exponential map and $v$ is an arbitrary unit tangent vector such that $v$ is perpendicular to $u:=T(o, p)^{\cdot}(0)$ in $T_oM$.

We prove that there exists a neighborhood $V_q$ of $q$ so that $V_q \subset N$ and the reference curve $\widetilde T(p, x)$ is defined for any $x \in V_q$. 

Let $x \in N$. Let $\theta_x$ denote the angle of $T(o,x)^\cdot (0)$ with $T(o,p)^\cdot (0)$. 
Then $\theta_x$ is continuous for $x \in N$. 
Define a map $\Psi : N \rightarrow \widetilde M_{\delta (R)\tilde p}^+$ by $\Psi (x)=(d(o,x), \theta (\tilde p )+\theta_x)$. 
Since $\Psi$ is continuous, there exists a neighborhood $V'_q$ of $q$ such that $\Psi (V'_q) \subset \widetilde U_{\tilde q}\cap \widetilde M_{\delta (R)\tilde p}^+$.

We claim that all points $x \in V'_q$ have their reference points.
Let $x \in V'_q \smallsetminus T_e(o,q)$ where $T_e(o,q)$ denotes the maximal minimizing geodesic from $o$ through $q$.
Since $x \not\in T_e(o,q)$, we have $\theta_x \not= \pi$.
Let $r(t)$ and $\theta (t)$ satisfy the equation $T(\tilde p, \Psi (x))(t)=(r(t), \theta (\tilde p)+\theta (t))$, $0 \le t \le d(\tilde p, \Psi (x))$. 
Let $u:=T(o,p)^{\cdot}(0)$ and $v_x=T(o,x)^\cdot (0)$. Set $v=(v_x-\cos \theta_x u)/\sin \theta_x$ which is the unit tangent vector perpendicular to $u$ and contained in the subspace spanned by $\{ u, v_x \}$.
Then we define a curve $c(t)=\exp_o(r(t)v(t))$, $0 \le t \le d(\tilde p, \Psi (x))$, where $v(t)=\cos \theta (t) u+\sin \theta (t) v$. 
The curve $c$ connects $p$ and $x$ and its length is less than $d(\tilde p, \Psi (x))$ because of the curvature condition and the Rauch comparison theorem (see \cite{CE}). Therefore, we have $d(p,x) < d(\tilde p, \Psi (x))$.
Thus, we can define the reference point $\tilde x$ of $x$ in $\widetilde M_{\delta (R)\tilde p}^+$ because $r(\tilde x)=r(\Psi (x))=d(o,x)$, $\theta (\tilde x) \le \theta_x$ and Lemma \ref{circle} (1).
Since all points $x \in V'_q\cap T_e(o,q)$ are accumulation points of $V'_q \smallsetminus T_e(o,q)$, every point $x \in V'_q$ has its reference point $\tilde x$ in $\widetilde M_{\delta (R)\tilde p}^+$.

From $V'_q$, we can have a neighborhood $V_q$ of $q$ mentioned above.
Suppose for indirect proof that there exists a sequence of points $x_j$ converging to $q$ such that some point $y_j \in T(p,x_j)$ defines the reference curve $\widetilde T(p,y_j)$ and some point in $T(y_j, x_j)$ close to $y_j$ does not have any reference point. 
Since those points $y_j$'s satisfy $d(o,y_j)+d(p, y_j) \ge r_0$, $q \in E(o, p \, ; r_0)$ and Lemma \ref{ellipse} (1), the sequence $d(y_j, x_j)$ goes to zero, and, hence, the sequence of the points $y_j$ converges to $q$. 
Thus, $T(y_j, x_j) \subset V'_q$ for a sufficiently large $j$, contradicting that all points $x \in V'_q$ have their reference points. 
Therefore, we have the neighborhood $V_q$ as required.

Suppose that $\angle \tilde p \tilde o \tilde q < \pi$.
Let $\widetilde U_{\tilde q} \subset {\rm Int}(\widetilde M_{\delta (R)}^+)$ be a neighborhood of $\tilde q$. Then there exists a neighborhood $V'_q$ of $q$ in $M$ such that $\widetilde F^{-1}_{\tilde p}\circ F_p(V'_q)\subset \widetilde U_{\tilde q}$.
As the argument above, we can have a neighborhood $V_q$ of $q$ as required.

Thus, we have found the set $U=\bigcup_{q \in E(o,p \, ; r_0 )}V_q$ which is a neighborhood around $E(o,p \, ; r_0(R) )$ such that $U \subset F_p{}^{-1}( \widetilde F_{\tilde p}(\widetilde M_{\delta(R)\tilde p}^+))$ .
\end{proof}

\begin{ass}\label{Ass4}
There exists an $r_2$ with $r_0(R) < r_2 \le r_1$ such that the condition {\rm (C1)} is true for any point $q \in E(o,p \, ; r)$, $r_0 (R)< r < r_2$.
\end{ass}

\begin{proof}
Suppose for indirect proof that (C1) is not true for any $r > r_0(R)$,
namely there exists a sequence of $r_j > r_0(R)$ such that $r_j$ converges to $r_0(R)$ and there 
are no minimizing geodesic segments $T(p, q_j)$ with $\widetilde 
T(p,q_j) \ge U(\tilde p, \tilde q_j)$ for some $q_j \in E(o,p \, ;r_j)$. 
Suppose without loss of generality that $q_j$ converges to $q_0 \in E(o,p \, 
;r_0(R))$. 
We then have either 
\[ 
\widetilde T(p, q_j) \le U(\tilde p, \tilde q_j) \quad\quad 
\mbox{or} \quad\quad 
\widetilde T(p, q_j) \cap U(\tilde p, \tilde q_j) \not= \{ \tilde p, \tilde q_j 
\}. 
\] 
Let $q'_j = T(p,q_j)\cap E(o,p \, ;r_0(R))$. Then, we have $\widetilde T(p,q'_j) 
\subset \widetilde T(p,q_j)$, since $T(p,q'_j) \subset T(p,q_j)$. 
It follows from the choice of $r_0(R)$ and the condition (C1)
that $\widetilde T(p,q'_j) \ge U(\tilde p, \tilde q'_j)$. If the first 
inequality is true, we then have $\widetilde T(p,q_0)=U(\tilde p, \tilde q_0)$ 
as its limit. This contradicts our curvature condition $K_{\delta (R)}$. 

If the 
second situation occurs, we then have the reverse inequality for some point 
$q' \in T(p,q_j)$ near $q_j$ for sufficiently large $j$ so that 
\[ 
\widetilde R(q',q_j) \le T(\tilde q', \tilde q_j) 
\] 
because of Lemma 16 (3) and (4). 
This contradicts Lemma \ref{fundamental}. Therefore, (C1) is true for some $r_2 > r_0(R)$. 
\end{proof}

\begin{ass}\label{Ass5}
The condition {\rm (C2)} is satisfied for all $q \in E(o, p \, ; r)$, $r_0(R) < r < r_2$.
\end{ass}

\begin{proof} 
Let $q \in M$ with $d(o,q)+d(p,q) < r_2$. For convenience, we set $q(t)=T(p,q)(t)$ 
and $\tilde q(t)=\widetilde T(p,q)(t)$, $0 \le t \le d(p,q)$. Let $t_0$ 
be the least upper bound of the set of all $t_1 \le d(p,q)$ so that 
there exists a minimizing geodesic segment $T(\tilde p, \tilde q(t))$ with 
$\widetilde T(p,q(t)) \ge T(\tilde p, \tilde q(t))$ for all $t \in (0, t_1)$. Recall that $t_0 > 0$ because of Lemma \ref{fundamental}. Suppose for indirect proof that 
$t_0 < d(p,q)$. If $\tilde q(t_0) \not\in Cut(\tilde p)$, then there exists a 
positive $\varepsilon $ such that $\widetilde T(p,q(t_0+t)) \ge T(\tilde p, 
\tilde q(t_0+t))$ for all $t \in (0, \varepsilon )$ because of Lemma \ref{comparison}. This 
contradicts the choice of $t_0$. Suppose $\tilde q(t_0) \in Cut(\tilde p)$. 
Since the minimizing geodesic segment $T(p, q(t_0))$ is unique and (C1) is 
satisfied, we have $\widetilde T(p,q(t_0)) \ge U(\tilde p, \tilde 
q(t_0))$. 
As is observed in the proof of Lemmas \ref{upper} and \ref{positiveCross}, there exists a 
positive $\varepsilon $ such that $\widetilde T(p, q(t_0+t)) \ge U(\tilde p, 
\tilde q(t_0+t))$ for all $t \in (0, \varepsilon )$. This contradicts the 
choice of $t_0$. Hence, it follows that 
$\widetilde T(p,q) \ge T(\tilde p, \tilde q)$.
\end{proof}

Assertions \ref{Ass3} to \ref{Ass5} imply that $M \smallsetminus B(o, p \, ; r_0(R)) \not= \emptyset$ is false when we employ the reference surface $\widetilde M_{\delta (R)}$. 
Since $\delta (R) \rightarrow 0$ as $R \rightarrow \ell$, we conclude that $M \smallsetminus B(o, p \, ; r_0) \not= \emptyset$ is false to the original reference surface of revolution $\widetilde M$ ($\delta (\ell)=0$). This completes the proof of Theorems \ref{restate1} and \ref{restate2}. 
\end{proof} 

The following proposition has been proved in the above argument. 

\begin{prop}\label{charCut}
Let $M$ and $p$ satisfy the same assumption as in Theorem \ref{restate1}. Then, a point $q \in M$ is a cut point of $p$ if there exists a minimizing geodesic segment
$T(p,q)$ such that $\widetilde T(p,q) \not\ge U(\tilde p, \tilde q)$. 
\end{prop}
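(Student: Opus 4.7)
The plan is to derive this proposition directly from condition (C1) established during the proof of Theorems \ref{restate1} and \ref{restate2}, using a short uniqueness argument in the contrapositive direction.

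First, I would recall that the cascade of Assertions \ref{Ass1}--\ref{Ass5}, combined with the auxiliary surfaces $\widetilde M_{\delta (R)}$ and the limit $R \to \ell$, establishes condition (C1) at every point $q \in M$: there always exists at least one minimizing geodesic segment, call it $T_{*}(p,q)$, whose reference curve satisfies $\widetilde T_{*}(p,q) \ge U(\tilde p, \tilde q)$. Now suppose the hypothesis of the proposition holds, so there also exists some minimizing segment $T(p,q)$ with $\widetilde T(p,q) \not\ge U(\tilde p, \tilde q)$. Then the two reference curves relate to $U(\tilde p, \tilde q)$ in incompatible ways, forcing $T(p,q)\ne T_{*}(p,q)$ as point sets in $M$. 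The existence of two distinct minimizing geodesic segments from $p$ to $q$ is by definition the statement that $q \in Cut(p)$, which is what we wished to prove.

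The main conceptual point, and the only non-trivial step, is the verification that (C1) really holds at an \emph{arbitrary} $q \in M$, not merely inside some preliminary ellipsoid $B(o,p;r_0)$. This is precisely what Assertions \ref{Ass3}--\ref{Ass5} accomplish in the proof of the main theorems: they show that if $B(o,p;r_0(R))$ failed to exhaust $M$, one could extend both (C1) and (C2) past $r_0(R)$ in $\widetilde M_{\delta(R)}$, contradicting the maximality of $r_0(R)$; letting $\delta(R)\to 0$ then transports the conclusion back to the original $\widetilde M$. With this global availability of (C1) in hand, the proposition follows as an immediate corollary, and no additional geometric input is required beyond the definition of a cut point as a point joined to $p$ by more than one minimizing geodesic segment.
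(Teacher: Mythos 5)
Your central argument --- produce, via (C1), a minimizing segment $T_{*}(p,q)$ with $\widetilde T_{*}(p,q)\ge U(\tilde p,\tilde q)$, note that it must differ from the given $T(p,q)$ because the two reference curves cannot both satisfy and fail $\ge U(\tilde p,\tilde q)$, and conclude $q\in Cut(p)$ from the existence of two distinct minimizing segments --- is exactly the paper's argument in the case where the reference point $\tilde q$ lies in ${\rm Int}(\widetilde M_{\tilde p}^{+})$. For that case your proposal is correct and complete.

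The gap is the boundary case $\theta(\tilde q)=\theta(\tilde p)$ or $\theta(\tilde p)+\pi$, which your proposal does not address and which occupies most of the paper's proof. Your appeal to ``(C1) holds at every $q\in M$'' is too quick there: the hypothesis (2.1) of Theorem \ref{restate1} only controls $Cut(\tilde p)\cap{\rm Int}(\widetilde M_{\tilde p}^{+})$, and the mechanism by which (C1) is secured in the interior (namely, $q\notin E_p(r)$ by (2.1), hence Lemma \ref{upper} produces a segment whose reference curve dominates $U(\tilde p,\tilde q)$) is not available when $\tilde q$ sits on the meridian through $\tilde p$ or on the opposite meridian, where $\tilde q$ may be a cut point of $\tilde p$ not excluded by (2.1) and where $\widetilde T(p,q)$ can degenerate into a union of meridian arcs. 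Establishing the conclusion at such points is precisely the content of the delicate analysis in Assertion \ref{Ass3}, and the paper's proof of the proposition re-runs that analysis: either $\widetilde T(p,q)=T(\tilde p,\tilde o)\cup T(\tilde o,\tilde q)$ is itself minimizing, in which case the curvature condition forces $\tilde q\notin Cut(\tilde p)$ whenever $q\notin Cut(p)$ and the hypothesis $\widetilde T(p,q)\not\ge U(\tilde p,\tilde q)$ is contradicted; or $\triangle opq$ bounds a totally geodesic surface isometric to the comparison triangle domain, in which case one passes to the perturbed reference surface $\widetilde M_{\delta}$ of Remark \ref{curvature} and derives a contradiction with Lemma \ref{basic} (1). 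Neither branch is a formal consequence of ``(C1) holds everywhere''; without them your argument is circular at boundary reference points, since the existence of the auxiliary segment $T_{*}(p,q)$ there is exactly what has to be proved by other means.
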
 
 
\begin{proof} 
As was seen in the proof of Theorem \ref{restate1}, if the reference point $\tilde q$ is in ${\rm Int}(\widetilde M_{\tilde p}^+)$, then there exists a minimizing geodesic 
$T$ connecting $p$ and $q$ such that $\widetilde T \ge U(\tilde p, \tilde 
q)$. Therefore, we have at least two minimizing geodesics $T$ and $T(p,q)$ 
connecting $p$ and $q$. This implies that $q\in Cut(p)$.

Suppose that $\theta (\tilde q)=0$ or $\pi$. Then, as was seen in the proof of Assertion \ref{Ass3}, there are two possibilities. One is that  $\widetilde T(p,q)=T(\tilde p, \tilde o)\cup T(\tilde o, \tilde q)$ and it is a minimizing geodesic segment in $\widetilde M$. 
Then, it follows from the curvature condition that if $q \not\in Cut (p)$, then $\tilde q \not\in Cut (\tilde p)$. Then, $\widetilde T(p,q) \ge U(\tilde p, \tilde q)$ is true, a contradiction. 
The other is that a geodesic triangle $\triangle opq$ bounds a totally geodesic 2-dimensional submanifold in $M$ which is isometric to the comparison triangle domain $\triangle \tilde o \tilde p \tilde q$ in $\widetilde M$. 
In this case, for any $\delta >0$, we regard $\widetilde M_{\delta}$ as a reference surface of $\widetilde M$ and $M$. 
Then, the reference point $\tilde q \in \widetilde M_{\delta}^+$ of $q$ belongs to ${\rm Int}(\widetilde M_{\delta}^+)$ and, moreover, the boundary of the set of the reference points of all points in $\widetilde M$. If $q \not\in Cut (p)$, then this contradicts Lemma \ref{basic} (1), meaning that $q \in Cut(p)$.
\end{proof} 
 
\begin{rem}\label{changeAss}
{\rm 
From the proof of Theorems we notice that the assumption $F_p(E(p))\cap 
\widetilde F_{\tilde p}(Cut(\tilde p)\cap {\rm Int}(\widetilde M_{\tilde 
p}^+))=\emptyset$ can be replaced by the following condition: If $q \in 
F_p{}^{-1}( \widetilde F_{\tilde p}(Cut(\tilde p)\cap {\rm Int}(\widetilde 
M_{\tilde p}^+)))$, then there exists a minimizing geodesic segment $T(p,q)$ with 
$\widetilde T(p,q) \ge U(\tilde p, \tilde q)$. 
} 
\end{rem}

\section{Maximum perimeter and diameter}
We have corollaries which are the special version of Corollary \ref{App-1}.

\begin{cor}\label{circumference}
Let $(M, o)$ and all points $p \in M$ with $p \not= o$ satisfy the same assumption in Theorem \ref{restate1}.
Then evry geodesic triangle $\triangle opq$ admits its comparison triangle $\triangle \tilde o \tilde p \tilde q $ in $\widetilde M$. In particular, if $\ell < \infty$, we have
\[
d(o,p)+d(p,q)+d(o,q) \le 2\ell 
\]
and the diameter of $M$ is less than or equal to $\ell$.
\end{cor}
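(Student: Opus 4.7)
The plan is to deduce this corollary almost directly from Theorem \ref{restate2} together with two elementary facts about the reference surface $\widetilde M$. First, the hypothesis tells us that condition $(2.1)$ holds for \emph{every} choice of $p\in M\smallsetminus\{o\}$, so Theorem \ref{restate2} is applicable with $o$ as the base point and any prescribed $p\neq o$. Applying it to an arbitrary geodesic triangle $\triangle opq$ immediately furnishes a comparison triangle $\triangle\tilde o\tilde p\tilde q$ in $\widetilde M$ with matching side lengths, which settles the first assertion.

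For the perimeter bound, I would invoke the observation recorded in the paragraph preceding Lemma \ref{ellipse}: when $\ell<\infty$, the function $\tilde x\mapsto d(\tilde o,\tilde x)+d(\tilde p,\tilde x)$ on $\widetilde M$ attains its maximum $2\ell-d(\tilde o,\tilde p)$ at $\tilde o_1$. Evaluating this inequality at $\tilde x=\tilde q$ and rearranging gives
\[
d(\tilde o,\tilde p)+d(\tilde p,\tilde q)+d(\tilde o,\tilde q)\le 2\ell
\]
in $\widetilde M$. Since the comparison triangle produced in the first step preserves all three edge lengths, the same bound transfers to $\triangle opq\subset M$, proving the perimeter estimate.

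For the diameter estimate I would argue as follows. For two arbitrary points $x,y\in M$, first observe that if one of them equals $o$, then $d(o,x)\le\ell$ by the standing convention (cf.\ the opening of \S 5, where it is noted that $d(o,p)<\ell$ may be assumed for all $p\in M$). Otherwise take $x,y\in M\smallsetminus\{o\}$ and apply the perimeter bound just obtained to the geodesic triangle $\triangle oxy$ (with $x$ playing the role of $p$, which is legitimate by the hypothesis of the corollary); combined with the triangle inequality $d(x,y)\le d(o,x)+d(o,y)$, this yields $2\,d(x,y)\le d(o,x)+d(o,y)+d(x,y)\le 2\ell$, so $d(x,y)\le\ell$.

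I do not anticipate a genuine obstacle: all of the analytic content sits in Theorems \ref{restate1} and \ref{restate2} and in the simple geometry of $\widetilde M$ (the maximum of $d(\tilde o,\cdot)+d(\tilde p,\cdot)$ at $\tilde o_1$). The only care needed is to verify that a geodesic triangle $\triangle oxy$ exists for every pair $x,y$ used in the diameter step — which is automatic since $M$ is complete — and to handle the degenerate case $x=o$ or $y=o$ separately as above.
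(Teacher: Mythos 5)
Your proposal is correct, and the first two steps (existence of the comparison triangle via Theorem \ref{restate2}, and the perimeter bound via the fact that $d(\tilde o,\cdot)+d(\tilde p,\cdot)$ attains its maximum $2\ell-d(\tilde o,\tilde p)$ at $\tilde o_1$) coincide with the paper's argument. Where you diverge is the diameter estimate: the paper returns to the reference surface and bounds $d(\tilde p,\tilde q)\le\min\{d(\tilde p,\tilde o)+d(\tilde o,\tilde q),\,d(\tilde p,\tilde o_1)+d(\tilde o_1,\tilde q)\}\le\ell$, using that these two broken paths through $\tilde o$ and through $\tilde o_1$ have total length $2\ell$; you instead combine the already-proved perimeter bound with the ordinary triangle inequality in $M$, getting $2d(x,y)\le d(o,x)+d(o,y)+d(x,y)\le 2\ell$. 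Your route is more elementary --- it never touches $\widetilde M$ again once the perimeter bound is in hand --- while the paper's version isolates the geometric fact about $\tilde o_1$ that is then reused in the rigidity argument of Corollary \ref{maximalPerimeter} (where the equality case $d(\tilde p,\tilde q)=d(\tilde p,\tilde o_1)+d(\tilde o_1,\tilde q)$ is what produces the point $o_1$ at distance $\ell$ from $o$). Your handling of the degenerate case $x=o$ is also fine, though the cleanest justification is simply that every point $y$ has a reference point $\tilde y\in\widetilde M$ with $d(o,y)=r(\tilde y)\le\ell$.
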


\begin{proof}
As was seen in the paragraph just before Lemma \ref{ellipse}, we have
\[
d(o,q)+d(p,q) =d(\tilde o, \tilde q)+d(\tilde p, \tilde q) \le 2\ell - d(\tilde p, \tilde o) =2\ell - d(p,o).
\]
Therefore, we have $d(o,p)+d(p,q)+d(o,q) \le 2\ell $.

Let $p$ and $q$ be points in $M$ such that $d(p,q)$ is the diameter of $M$. It is clear that if $p=o$, then $d(p,q) \le \ell$. Suppose that $p \not= o$. 
If $\tilde p$ and $\tilde q$ in $\widetilde M$ are the reference points of $p$ and $q$, respectively, we then have
\[
d(\tilde p, \tilde q) \le \min \{ d(\tilde p, \tilde o)+d(\tilde o, \tilde q), d(\tilde p, \tilde o_1)+ d(\tilde o_1, \tilde q) \} \le \ell,
\]
where $\tilde o_1$ is the antipodal point of $\tilde o$ in $\widetilde M$. Therefore, we have $d(p,q) \le \ell$.
\end{proof}

We have the maximum diameter theorem and the maximum perimeter theorem if the assumption (2.1) is extended to the boundary of $\widetilde M_{\tilde p}^+$.

\begin{cor}\label{maximalPerimeter}
Let $(M, o)$ be a complete pointed Riemannian manifold $(M,o)$ which is referred to a 
surface of revolution $(\widetilde M, \tilde o)$ with $\ell < \infty$. Assume that all points $p \in M$ with $p \not= o$ satisfy 
\begin{equation}
\label{cond2} 
F_p(E(p))\cap \widetilde F_{\tilde p}(Cut(\tilde p)\cap \widetilde 
M_{\tilde p}^+)=\emptyset.
\end{equation}
If there exists a pair of points $p$ and $q$ in $M$ such that the perimeter of the geodesic triangle $\triangle opq$ is $2\ell$, then $M$ is isometric to the warped product manifold whose warping function is the radial curvature function of $\widetilde M$.
In particular, the same conclusion holds for $M$ if the diameter of $M$ is $\ell$.
\end{cor}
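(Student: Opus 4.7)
The plan is to reduce both hypotheses to the existence of a single point $x_0 \in M$ with $d(o, x_0) = \ell$, use this to foliate $M$ by minimizing radial geodesics from $o$ to $x_0$, and then force the warped-product structure via Jacobi-field rigidity along these radial geodesics.

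First, I treat the perimeter hypothesis. Given $\triangle opq\subset M$ of perimeter $2\ell$, Theorems \ref{restate1} and \ref{restate2} supply a comparison triangle $\triangle\tilde o\tilde p\tilde q\subset\widetilde M$ of the same perimeter. In $\widetilde M$, the triangle inequality $d(\tilde p,\tilde q)\le(\ell-r(\tilde p))+(\ell-r(\tilde q))$ (going via $\tilde o_1$) must be saturated, which forces $\tilde p,\tilde q$ to lie on opposite meridians and $T(\tilde p,\tilde q)$ to pass through $\tilde o_1$ at parameter $\ell-r(\tilde p)$. Setting $x_0:=T(p,q)(\ell-d(o,p))$, the Alexandrov inequality (2.2) yields $d(o,x_0)\ge d(\tilde o,\tilde o_1)=\ell$, while Corollary \ref{circumference} yields $d(o,x_0)\le\ell$, hence $d(o,x_0)=\ell$. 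For the diameter hypothesis, take $p,q\in M$ with $d(p,q)=\ell$; the case $o\in\{p,q\}$ is immediate, and otherwise the perimeter bound together with the triangle inequality forces $d(o,p)+d(o,q)=\ell=d(p,q)$, so that $\triangle opq$ has perimeter $2\ell$, and the previous argument, applied in dimension at least two to a minimizing segment from $p$ to $q$ not passing through $o$, again produces the point $x_0$.

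Given such $x_0$, for every $y\in M$ the perimeter bound of Corollary \ref{circumference} applied to $\triangle oyx_0$ yields $d(o,y)+d(y,x_0)\le\ell=d(o,x_0)$, and the triangle inequality yields the reverse, hence equality. Thus every point of $M$ lies on some minimizing geodesic from $o$ to $x_0$, and in particular every unit-speed radial geodesic $\gamma_v(t)=\exp_o(tv)$, $|v|=1$, is minimizing on $[0,\ell]$ and satisfies $\gamma_v(\ell)=x_0$.

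Finally, along each such $\gamma_v$, any normal Jacobi field $J$ with $J(0)=0$ necessarily satisfies $J(\ell)=0$, since $\exp_o$ collapses the sphere of radius $\ell$ in $T_oM$ to the single point $x_0$. The radial curvature hypothesis $K_M\ge K(r)$ on radial planes, together with the reference Jacobi equation $f''+Kf=0$, $f(0)=f(\ell)=0$, force via Rauch's comparison theorem the equalities $|J(t)|=f(t)\,|J'(0)|$ on $[0,\ell]$ and $K_M(\gamma'_v,J)=K(t)$ in every transverse direction along $\gamma_v$. Consequently, in normal coordinates at $o$ the metric of $M$ is $dr^2+f(r)^2 g_{S^{n-1}}$, identifying $M$ with the warped product built from the warping function $f$ (the solution of $f''+Kf=0$, $f(0)=0$, $f'(0)=1$). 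The main obstacle is this last rigidity step: the comparison theorems supply only one-sided Rauch inequalities, so one must carefully exploit the double boundary condition $J(0)=0=J(\ell)$ to force equality simultaneously in every transverse direction along every radial geodesic, and then assemble these pointwise Jacobi-field matchings into a global isometry with the warped product.
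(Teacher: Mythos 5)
Your overall strategy---produce a single point $o_1$ with $d(o,o_1)=\ell$ and then invoke the maximal--radius (farthest point) rigidity---is the same as the paper's, which at the final step simply cites the farthest point theorem of \cite{IMS-0} rather than re-running the Jacobi-field/Rauch argument. Your rigidity sketch in the last paragraph is essentially the standard Cheng-type argument and is fine in outline. The problem is in the step that produces $o_1$.

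You claim that ``the Alexandrov inequality (2.2) yields $d(o,x_0)\ge d(\tilde o,\tilde o_1)=\ell$'' for $x_0:=T(p,q)(\ell-d(o,p))$. This does not follow from (2.2) as stated. Theorem \ref{restate1} guarantees $\widetilde T(p,q)\ge T(\tilde p,\tilde q)$ for \emph{some} minimizing geodesic segment $T(\tilde p,\tilde q)$, and for an arbitrary $T(p,q)$ only the lower bound $\widetilde T(p,q)\ge L(\tilde p,\tilde q)$ is available. When the perimeter is $2\ell$ the segment through $\tilde o_1$ is $U(\tilde p,\tilde q)$, not every $T(\tilde p,\tilde q)$: if $\tilde q\in Cut(\tilde p)$ one may have $L(\tilde p,\tilde q)\ne U(\tilde p,\tilde q)$ with $L$ staying strictly below the $r=\ell$ level (on the round sphere, $\tilde q$ is forced to be antipodal to $\tilde p$ and $L$ passes through $\tilde o$, giving only $d(o,x_0)\ge|\ell-2d(o,p)|$). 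So the comparison with an unspecified $T(\tilde p,\tilde q)$ does not produce a point at distance $\ell$ from $o$. What is needed---and what the paper uses---is condition (C1): the existence of a minimizing segment $T(p,q)$ in $M$ with $\widetilde T(p,q)\ge U(\tilde p,\tilde q)$. Since $\tilde o_1$ and the relevant part of $Cut(\tilde p)$ sit on the boundary meridians of $\widetilde M_{\tilde p}^+$, securing (C1) there is exactly why the corollary strengthens (2.1) to the hypothesis (\ref{cond2}) on the \emph{closed} half-surface $\widetilde M_{\tilde p}^+$ rather than its interior. Your proof never uses (\ref{cond2}), which is a sign that this case has not been handled. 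The same gap reappears in your diameter reduction: when $d(p,q)=d(o,p)+d(o,q)$ the obvious minimizing segment from $p$ to $q$ passes through $o$ and its reference curve runs through $\tilde o$, so a segment ``not passing through $o$'' whose reference curve dominates $U(\tilde p,\tilde q)$ must again be extracted from (C1) under (\ref{cond2}); it is not supplied by dimension considerations alone. Once (C1) is invoked, $\widetilde T(p,q)\ge U(\tilde p,\tilde q)$ together with Lemma \ref{baseangle} (2) forces $\widetilde T(p,q)$ to hit the $r=\ell$ level, and the rest of your argument goes through.
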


\begin{proof}
Suppose that the perimeter of the geodesic triangle $\triangle opq$ is $2\ell$. From Theorem \ref{restate1}, it has a comparison triangle $\triangle \tilde o \tilde p \tilde q$ in $\widetilde M_{\tilde p}^+$. 
We then have $d(\tilde p, \tilde q)=d(\tilde p, \tilde o_1)+d(\tilde o_1, \tilde q)$, since
\begin{eqnarray*}
&&d(\tilde p, \tilde q)+d(\tilde p, \tilde o)+d(\tilde o, \tilde q)  \\
&=& 2\ell \\
&=& 2d(\tilde o, \tilde o_1) \\
&=&d(\tilde p, \tilde o_1)+d(\tilde o_1, \tilde q)+ d(\tilde p, \tilde o)+d(\tilde o, \tilde q),
\end{eqnarray*}
where $\tilde o_1$ is the antipodal point of $\tilde o$ in $\widetilde M$.
This implies that $U(\tilde p, \tilde q)=T(\tilde p, \tilde o_1)\cup T(\tilde o_1, \tilde q)$.
From the assumption  (\ref{cond2}) and the condition (C1), there exists a minimizing geodesic segment $T(p, q)$ in $M$ such that $\widetilde T(p,q) \ge U(\tilde p, \tilde q)$. Thus, we can find a point $o_1 \in T(p,q)$ whose reference point is $\tilde o_1$. 
Since $d(o, o_1)=d(\tilde o, \tilde o_1)=\ell$, The farthest point theorem in \cite{IMS-0} concludes our corollary.

Suppose that the diameter of $M$ is $\ell$. Let the distance between $p$ and $q$ be $\ell$. 
If $p=o$, then the statement follows from the farthest point theorem (see \cite{IMS-0}).
Suppose $p \not=o$. 
As was seen in the proof of Corollary \ref{circumference},  we have
\[
\ell = d(\tilde p, \tilde q) \le \min \{ d(\tilde p, \tilde o)+d(\tilde o, \tilde q), d(\tilde p, \tilde o_1)+ d(\tilde o_1, \tilde q) \} \le \ell.
\]
Since 
\[
d(\tilde p, \tilde o)+d(\tilde o, \tilde q)+d(\tilde p, \tilde o_1)+ d(\tilde o_1, \tilde q)=2\ell,
\]
we have 
\[
d(\tilde p, \tilde q) = d(\tilde p, \tilde o)+d(\tilde o, \tilde q) = d(\tilde p, \tilde o_1)+ d(\tilde o_1, \tilde q) = \ell.
\]
Thus, the perimeter of the comparison triangle $\triangle \tilde o \tilde p \tilde q$ in $\widetilde M$ of $\triangle opq$ is $2\ell$. The maximal perimeter theorem prove the maximal diameter theorem.
\end{proof}

\begin{rem}\label{constCurv}{\rm
If the Gauss curvature of the reference surface $\widetilde M$ is a positive constant $\kappa $, we do not need the assumption (\ref{cond2}). 
In fact, as was seen in the proof of Corollary \ref{maximalPerimeter}, we have $U(\tilde p, \tilde q)=T(\tilde p, \tilde o_1)\cup T(\tilde o_1, \tilde q)$ if the perimeter of $\triangle opq$ is $2\ell$. 
If $d(\tilde p, \tilde q) < \ell=\pi / \sqrt{\kappa}$, then the minimizing geodesic segment is unique, meaning that $U(\tilde p, \tilde q)=L(\tilde p, \tilde q)$.  
This implies that there exists a minimizing geodesic segment $T(p,q)$ in $M$ such that $\widetilde T(p,q) \ge T(\tilde p, \tilde q)$ as the limit of the positional relations in ${\rm Int}(\widetilde M_{\tilde p}^+)$.
Thus, we have a point $o_1$ whose reference point is $\tilde o_1$.
In particular, the diameter of $M$ is $\ell$. 
In the case of  $d(\tilde p, \tilde q)=\ell$, it is clear that the diameter of $M$ is $\ell$. 
Therefore, the maximum diameter theorem states that $M$ is a sphere with constant curvature $\kappa$.
}\end{rem}

\begin{proof}[Proof of Corollary \ref{App1}] 
We first prove that there exists a straight line in $\widetilde M$ if there is a 
straight line in $M$. Let $T(t)$, $-\infty < t < \infty $, be a straight line in 
$M$. Let $t_0$ be a parameter such that $d(o,T(t_0))=d(o,T)$. We set $\widetilde 
T(t_1)=(d(o,T(t_1)), 0)$ for all $t_1 \in (-\infty, t_0)$, and $\widetilde T(t) = 
\widetilde F_{\widetilde T(t_1)}{}^{-1}\circ F_{T(t_1)}(T(t))$ for any $t \in (t_1, \infty)$. Then it follows from Theorem \ref{restate1} that $\widetilde T(t)$, $t \ge t_1$, is a 
curve in $\widetilde M_{\widetilde T(t_1)}^+$ such that $\widetilde T \ge 
T(\widetilde T(t_1), \widetilde T(t))$ for all $t \ge t_1$. The sequence of 
minimizing geodesic segments $S_t=T(\widetilde T(t_1), \widetilde T(t))$ 
connecting $\widetilde T(t_1)$ and $\widetilde T(t)$ contains a subsequence 
$S_k$ converging to a ray $S$ emanating from $\widetilde T(t_1)$ as $k 
\rightarrow \infty$. Let the ray be denoted by $S(t_1)(t)$, $t_1 \le t$. Then,  $d(\tilde o, S(t_1)(t_0)) \le d(o,T)$. 
From this fact we can find a sequence of rays $S(k)$ converging to a straight 
line $S$ as $k \rightarrow -\infty $. 
 
It is known that if there is a straight line in $\widetilde M$, then the total 
curvature of $\widetilde M$ is nonpositive. Therefore $M$ has no straight line. 
If $M$ has at least two ends, then there is a straight line connecting distinct 
ends. This is impossible because the total curvature of $\widetilde M$ is 
positive. 
\end{proof}

\end{document}